\definecolor{MyDarkBlue}{cmyk}{0.8,0.3,0.8,0.4}
\definecolor{yellow}{rgb}{0.99,0.99,0.70}
\definecolor{white}{rgb}{1.0,1.0,1.0}
\definecolor{black}{rgb}{0.00,0.00,0.00}
\newcommand{\red}[1]{\textcolor{\red}{#1}}
\numberwithin{equation}{section}
\def\theequation{\arabic{section}.\arabic{equation}}
\newtheorem{theorem}{Theorem}[section]
\newtheorem{lemma}[theorem]{Lemma}
\newtheorem{remark}[theorem]{Remark}
\newtheorem{definition}[theorem]{Definition}
\newtheorem{proposition}[theorem]{Proposition}
\newtheorem{corollary}[theorem]{Corollary}
\newtheorem{assumption}[theorem]{Assumption}
\newcommand{\be}{
\begin{equation}}
\newcommand{\ee}{\end{equation}
}
\newcommand{\ce}{
\begin{equation*}}
\newcommand{\de}{\end{equation*}
}
\def\e{{\mathrm{e}}}
\def\eps{\varepsilon}
\def\red{\textcolor{red}}
\def\[{{\Big[}}
\def\]{{\Big]}}
\def\<{{\langle}}
\def\>{{\rangle}}
\def\({{\Big(}}
\def\){{\Big)}}
\def\bx{{\mathbf{x}}}
\def\dif{{\mathord{{\rm d}}}}
\def\min{{\mathord{{\rm min}}}}
\def\no{\nonumber}
\def\={&\!\!=\!\!&}
\def\bt{\begin{theorem}}
\def\et{\end{theorem}}
\def\bl{\begin{lemma}}
\def\el{\end{lemma}}
\def\br{\begin{remark}}
\def\er{\end{remark}}
\def\bd{\begin{definition}}
\def\ed{\end{definition}}
\def\bp{\begin{proposition}}
\def\ep{\end{proposition}}
\def\bc{\begin{corollary}}
\def\ec{\end{corollary}}
\def\bx{\begin{Examples}}
\def\ex{\end{Examples}}
\def\cA{{\mathcal A}}
\def\cC{{\mathcal C}}
\def\cD{{\mathcal D}}
\def\cF{{\mathcal F}}
\def\cK{{\mathcal K}}
\def\cQ{{\mathcal Q}}
\def\cW{{\mathcal W}}
\def\mE{{\mathbb E}}
\def\mI{{\mathbb I}}
\def\mJ{{\mathbb J}}
\def\mN{{\mathbb N}}
\def\mR{{\mathbb R}}
\def\mW{{\mathbb W}}
\def\sB{{\mathscr B}}
\def\sF{{\mathscr F}}
\def\sL{{\mathscr L}}
\def\geq{\geqslant}
\def\leq{\leqslant}
\def\R{{\mathbb R}}
\begin{document}

\title[Optimal Rates for Ergodic SDEs Driven by Multiplicative $\alpha$-Stable Processes]{Optimal Rates for Ergodic SDEs Driven by Multiplicative $\alpha$-Stable Processes in Wasserstein-1 distance}

\author[X.~Jin]{Xinghu Jin}
\author[X.~Zhang]{Xiaolong Zhang$^*$}

\address[X.~Zhang]{(Corresponding Author) School of Mathematics and Statistics \\ 
                   Anhui Normal University \\ 
                   Wuhu, Anhui 241002 \\ 
                   P.R. China}
\email{zhangxl.math@ahnu.edu.cn}
\address[X.~Jin]{School of Mathematics \\ 
                  Hefei University of Technology \\ 
                  Hefei, Anhui 230009 \\ 
                  P.R. China}
\email{xinghujin@hfut.edu.cn}

\maketitle

{\small \noindent {\bf Abstract:}
This paper establishes the quantitative stability of invariant measures $\mu_{\alpha}$ for $\mathbb{R}^d$-valued ergodic stochastic differential equations driven by rotationally invariant multiplicative $\alpha$-stable processes with $\alpha\in(1,2]$. Under structural assumptions on the coefficients with a fixed parameter vector $\bm{\theta}$, we derive optimal convergence rates in the Wasserstein-$1$ ($\cW_{1}$) distance between the invariant measures introduced above, namely,

\begin{enumerate}
    \item[(i)] For any interval $[\alpha_0, \vartheta_0] \subset (1,2)$, there exists $C_1 = C(\alpha_0, \vartheta_0,\bm{\theta},d) > 0$ such that
    \begin{align*}
    \cW_{1}(\mu_\alpha, \mu_\vartheta) \leq C_1 |\alpha - \vartheta|, \quad \forall \alpha, \vartheta \in [\alpha_0, \vartheta_0].
    \end{align*}

    \item[(ii)] For any $\alpha_0\in (1,2)$, there exists $C_2 = C(\alpha_0, \bm{\theta}) > 0$ such that
\begin{align*}
    \cW_{1}(\mu_\alpha, \mu_2) \leq C_2\, d(2 - \alpha), \quad \forall \alpha \in [\alpha_0, 2).
    \end{align*}
\end{enumerate}
The optimality of these rates is rigorously verified by explicit calculations for the Ornstein-Uhlenbeck systems in \cite{Deng2023Optimal}. It is worth emphasizing that \cite{Deng2023Optimal} addressed only case (ii) under additive noise, whereas our analysis establishes results for both cases (i) and (ii) under multiplicative $\alpha$-stable noise, employing fundamentally different analytical methods. \\

{\bf Keywords}: Ergodic SDE; $\alpha$-stable process; Invariant measure; Wasserstein-$1$ distance; Optimal rate.

$\mathbf{2020\ MSC}$: 60B10, 60G51, 60H07


\bigskip

\section{Introduction}\label{PM}

We investigate the following two prototypical stochastic differential equations (SDEs):\begin{align}
    &\dif X_t = b(X_t)\dif t + \sigma(X_{t-})\dif L^{\scriptscriptstyle\!(\alpha)}_t, \quad X_0=x, \label{sde:L} \\
    &\dif Y_t = b(Y_t)\dif t + \sigma(Y_t)\dif B_t, \quad\quad\quad\!  Y_0=x, \label{sde:B}
\end{align}
where the coefficient pair $(b,\sigma)$ satisfies $b:\mathbb{R}^d\to\mathbb{R}^d$ and $\sigma:\mathbb{R}^d\to\mathbb{R}^d\otimes\mathbb{R}^d$. The driving noise processes consist of $(L^{\scriptscriptstyle\!(\alpha)}_t)_{t\geq0}$, a rotationally invariant $\alpha$-stable L\'evy process with $\alpha\in(1,2)$ characterized by its Fourier transform
{\fontsize{8}{8}\selectfont
$ \mathbb{E}\,e^{i\langle z, L^{\scriptscriptstyle\!(\alpha)}_t\rangle} = e^{-t|z|^\alpha/2}$
} for any $z\in\mR^d$,
and $(B_t)_{t\geq0}$, a standard Brownian motion. The analysis of SDEs driven by L\'evy processes has become increasingly important across various disciplines (see \cite{Arapostathis2019ergodicity, Chen2021solving, Tankov2003financial}). Under certain regularity and dissipativity conditions, it is well-known that SDEs \eqref{sde:L} and  \eqref{sde:B} admit unique (weak or strong) solutions $(X_t)_{t \geq 0}$ and $(Y_t)_{t \geq 0}$, respectively, and possess unique invariant measures denoted by $\mu_{\alpha}$ and $\mu_2$ (see \cite{Wang2020exponential,Zhang2023ergodicity}). On the other hand, the continuity theorem guarantees that, for any fixed $t>0$ the $\alpha$-stable process $L^{\scriptscriptstyle\!(\alpha)}_t$ converges weakly to the Brownian motion $B_t$ as $\alpha \uparrow 2$ (see \cite[Theorem 3.3.17]{Durrett2019probability}). This naturally raises two fundamental questions regarding SDEs \eqref{sde:L} and \eqref{sde:B}:
\begin{itemize}
  \item[(1)] For fixed $t > 0$, does $X_t$ converge weakly to $Y_t$ as $\alpha \uparrow 2$?
        
  \item[(2)] If the answer to (1) is affirmative, does $\mu_{\alpha}$ converge weakly to $\mu_2$ as $\alpha \uparrow 2$?  
\end{itemize}
In this paper, we provide a complete resolution to these questions, which are fundamentally motivated by at least three interconnected research domains.

Fundamental results of such SDEs, including well-posedness, Markov semigroup regularity, and ergodicity, have been extensively studied. We summarize some key contributions below. \textit{(i) For well-posedness of SDEs}: Applebaum \cite{Applebaum2009Levy} established strong existence and uniqueness for SDEs with jump processes or Brownian motion; 
Stroock \cite{Stroock1975diffusion} pioneered the well-posedness of martingale problems for L\'evy-type generators, and Lepeltier and Marchal \cite{Lepeltier1976problem} further examined its relationship to weak solutions; 
Chen and Wang \cite{Chen2016uniqueness} analyzed weak solutions for $\alpha$-stable processes with drifts in Kato classes, and Zhang \cite{Zhang2013stochastic} proved pathwise uniqueness for symmetric $\alpha$-stable SDEs with bounded drifts in fractional Sobolev spaces. \textit{(ii) For regularity of Markov semigroups}: Wang, Xu and Zhang \cite{Wang2015gradient} obtained the Bismut–Elworthy–Li type derivative formulas and gradient estimates for SDEs driven by multiplicative rotationally invariant $\alpha$-stable processes via finite-jump approximations; Chen, Hao and Zhang \cite{Chen2020Holder} proved gradient estimates for cylindrical $\alpha$-stable processes using Littlewood–Paley theory.
Chen, Xu, Zhang and Zhang \cite{Chen2024wd} obtained $\cC^s$-regularity ($s>\alpha$) for the Markov semigroup of SDEs driven by multiplicative rotationally invariant $\alpha$-stable processes. \textit{(iii) For the exponential ergodicity of SDEs}: there are at least three well-developed
methods for studying the exponential ergodicity of a Markov process defined through an SDE, namely, coupling method (see  \cite{Chen1989coupling,Eberle2016reflection,Liang2020gradient,Liang2020A,Luo2019refined}), functional inequality method (see \cite{Rockner2021weak,Wang2000functional,Wang2005functional,Wang2020exponential}) and Meyn-Tweedie's minorization condition (see  \cite{Arapostathis2019ergodicity,Chen2024wd,Gurvich2014diffusion,Xie2020ergodicity,Zhang2023ergodicity}).

Despite theoretical progress, numerical approximation, such as Euler-Maruyama (EM) schemes, of stochastic systems remains challenging. On the other hand, for the one-dimensional case, \cite{Roberts1996Exponential} established that the classical Langevin Monte Carlo diffusion with potential $U(x)=|x|^\gamma$ is exponentially ergodic if and only if $\gamma \geq 1$. Consequently, it implies that continuous Langevin diffusions are unsuitable for simulating heavy-tailed distributions, necessitating $\alpha$-stable driven systems. Recent developments in numerical error quantification employ two principal methodologies:
\textit{(i) For Lindeberg principle techniques}: Pag\'{e}s and Panloup \cite{Pages2023Unadjusted} applied this principle to analyze Wasserstein-$1$ and total variation distances for ergodic measures of diffusions and their adaptive EM schemes. Chen, Shao and Xu \cite{Chen2023A} extended Lindeberg's principle to Markov processes, establishing a general approximation framework. Subsequently, Chen, Deng, Schilling and Xu \cite{Chen2023approximation} employed this approach for additive $\alpha$-stable SDEs. \textit{(ii) For Stein's method}: Gurvich \cite{Gurvich2014diffusion} established Brownian steady-state approximations for continuous-time Markov chains, with applications to queueing systems; Fang, Shao and Xu \cite{Fang2019multivariate} investigated the Wasserstein-$1$ distance between ergodic measures of diffusions and their EM schemes; Chen, Xu, Zhang and Zhang \cite{Chen2024wd} obtained the $\cW_{\mathbf{d}}$-convergence rate of EM schemes for invariant measures of supercritical stable SDEs.

The study of parametric dependence SDEs constitutes a fundamental problem in stochastic analysis and statistical physics \cite{CLW23,P14,RX21}. Understanding how solution processes respond to parameter variations provides critical foundations for both theoretical investigations and practical applications. Here we only introduce some results for multiscale systems and underdamped Langevin dynamics. R\"{o}ckner and Xie \cite{RX21} established a general diffusion approximation for fully coupled multiscale SDEs, and derived four distinct averaged equations corresponding to different orders of small parameters. Consider an underdamped Langevin process $(X_t, V_t)_{t\geq 0}$ with friction coefficient $\gamma > 0$. In the regime $\gamma \to \infty$, the rescaled process $X_{\gamma t}$ converges to the Smoluchowski dynamics (overdamped Langevin SDE) (see \cite{P14}). Cao, Lu, and Wang \cite{CLW23} established explicit quantitative estimates for the exponential decay rate in $L^2$-distance to the stationary density. For certain convex potentials, they demonstrated that the convergence rate exceeds that of the overdamped Langevin dynamics when the friction coefficient $\gamma$ is tuned. Recently, Deng, Schilling and Xu \cite{Deng2023Optimal} and Deng, Li, Schilling and Xu \cite{Deng2024total} obtained the optimal Wasserstein-1 and total variation convergence rates between ergodic measures for additive  $\alpha$-stable SDEs as $\alpha \uparrow 2$.

In this paper, we extend the analysis of \cite{Deng2023Optimal} to multiplicative noise settings and establish optimal Wasserstein-1 convergence rates between ergodic measures for multiplicative $\alpha$-stable SDEs as $\alpha \uparrow \vartheta$ for $\vartheta \in (1,2]$. A key step is establishing exponential ergodicity for SDEs \eqref{sde:L} and \eqref{sde:B} by the coupling method (see \cite{Liang2020gradient,Wang2020exponential}). Under uniform ellipticity and Lipschitz continuity of coefficients, when the drift $b$ satisfies the following dissipative condition:
\begin{align}\label{DC0}
\langle b(x),x\rangle\leq -c_0|x|^2+c_1,\quad \forall x\in\mathbb{R}^d,
\end{align}
it is well-known that both SDEs \eqref{sde:L} and \eqref{sde:B} are exponentially ergodic (see \cite{Xie2020ergodicity,Zhang2023ergodicity}). We denote by $\mu_\alpha$ and $\mu_2$ the invariant measures of SDEs \eqref{sde:L} and \eqref{sde:B}, respectively. However, in this paper, stronger dissipative assumptions than \eqref{DC0} are required to achieve optimal Wasserstein-1 convergence rates (see Theorem \ref{th:main}). We will elaborate on $\vartheta=2$ and $\vartheta\in(1,2)$ two cases in detail. For $\vartheta=2$, we impose the following dissipativity condition:
\begin{itemize}
\item[$(DC)$] \phantomsection\label{DC} 
Let $\tilde{\sigma}:=(\sigma\sigma^{*}-\sigma_0^2 \mathbb{I})^{1/2}$ with sufficiently small $\sigma_0>0$. There exist $c_0,c_1>0$ such that for all $x,y\in\mathbb{R}^d$,
\begin{eqnarray*} 
2\langle x-y,b(x)-b(y)\rangle + \|\tilde{\sigma}(x)-\tilde{\sigma}(y)\|_{\rm HS}^2 - \frac{|(x-y)^*(\tilde{\sigma}(x)-\tilde{\sigma}(y))|^2}{|x-y|^2} \leq -c_0|x-y|^2+c_1.
\end{eqnarray*}
\end{itemize}
Under \hyperref[DC]{$(DC)$}, the exponential contraction of SDE \eqref{sde:B} in Wasserstein-1 distance depends on $c_0$ and $c_1$, implying the convergence rate dominated by $O(d(2-\alpha))$ (see Theorem \ref{th:main} (1)). For $\vartheta\in(1,2)$, we fix $\alpha_0,\vartheta_0\in(1,2)$ such that $\alpha,\vartheta\in[\alpha_0,\vartheta_0]$ and assume the following weaker dissipativity condition:
\begin{itemize}
\item[$(DC')$]\phantomsection\label{DC'} 
There exist $c_0,c_1>0$ such that for all $x,y\in\mathbb{R}^d$,
\begin{eqnarray*} 
\langle x-y,b(x)-b(y)\rangle \leq -c_0|x-y|^2+c_1.
\end{eqnarray*}
\end{itemize}
Under \hyperref[DC']{$(DC')$}, the exponential contraction of SDE \eqref{sde:L} in Wasserstein-1 distance depends on $c_0, c_1, d, \alpha_0, \vartheta_0$, yielding a convergence rate dominated by $O(|\vartheta-\alpha|)$ (see Theorem \ref{th:main} (2)). 
Although exponential contraction for SDE \eqref{sde:L} can also be established under weaker assumptions by Meyn-Tweedie's method from \cite{Zhang2023ergodicity}, such approach fails to provide explicit parameter dependence. Thus, we use the dissipative condition \hyperref[DC']{$(DC')$} for this case.
Additionally, since we use Malliavin calculus to analyze the short-time regularity of the semigroups associated with SDEs \eqref{sde:L} and \eqref{sde:B} (see Lemma \ref{lemma:g}), we require the following conditions on $b$ and $\sigma$:
\begin{assumption}\label{hyp:A}
The coefficients $b$ and $\sigma$ satisfy the following properties: 
\begin{itemize}

\item[$(A1)$]\phantomsection\label{A1} 
(Uniform Ellipticity) For any $|y|=1$, there exists $c_2\geq 1$ such that
\begin{align*}
c_2^{-1}\leq |\sigma(x)y|^2 \leq c_2, \quad \forall x\in\R^d.
\end{align*}

\item[$(A2)$]\phantomsection\label{A2} 
(Regularity) For any $|y_i|=1$ with $i=1,2,3$, there exists $c_3>0$ such that
\begin{align*}
\big|\mathsmaller{\prod_{i=1}^n}\nabla_{y_i}b(x)\big| + \big\|\mathsmaller{\prod_{i=1}^n}\nabla_{y_i}\sigma(x)\big\| \leq c_3, \quad \forall x\in\R^d,\  n=1,2,3.
\end{align*}

\end{itemize}
\end{assumption}

Let $\mathcal{P}_1(\mathbb{R}^d)$ denote the space of probability measures with finite first moments. The Wasserstein-1 distance between $\nu_1, \nu_2 \in \mathcal{P}_1(\mathbb{R}^d)$ is defined via Kantorovich-Rubinstein duality (see \cite[p.60]{V09}):
\begin{align}\label{de:W1}
\mathcal{W}_{1}(\nu_1,\nu_2) := \inf_{(X,Y)\in\Pi(\nu_1,\nu_2)} \mathbb{E}|X-Y|
= \sup_{g \in \mathrm{Lip}(1)} \left| \int g  d\nu_1 - \int g  d\nu_2 \right|,
\end{align}
where $\Pi(\nu_1,\nu_2)$ is the set of probability distributions on $\R^d\times \R^d$ with marginal measures $\nu_1$ and $\nu_2$, and $\mathrm{Lip}(1)$ denotes the class of Lipschitz functions with unit modulus:
$$ \mathrm{Lip}(1) := \left\{ f \in C(\mathbb{R}^d, \mathbb{R}) : |f(x)-f(y)| \leq |x-y| \text{ for all } x,y \in \mathbb{R}^d \right\}. $$
Under Assumption \ref{hyp:A}, SDEs \eqref{sde:L} and \eqref{sde:B} admit unique strong solutions $(X_t^x)_{t \geq 0}$ and $(Y_t^x)_{t \geq 0}$, respectively. Let $\mathscr{L}_t^{\alpha,x}$ and $\mathscr{L}_t^{x}$ denote the distributions at time $t$ of the solutions to SDEs \eqref{sde:L} and  \eqref{sde:B}. Define the parameter tuple $\bm{\theta} := (c_0, c_1, c_2, c_3)$. We now state our main result, and its proof is deferred to Section \ref{ERMO}.

\begin{theorem}\label{th:main}
Under Assumption \ref{hyp:A}, the following results hold:
\begin{enumerate}
\item If \hyperref[DC]{$(DC)$} holds, then for any $\alpha_0 \in (1,2)$, there exists $C_1 = C(\bm{\theta},\alpha_0) > 0$ such that for all $\alpha \in [\alpha_0,2)$ and $t>0$,
\begin{align*}
\cW_{1}\left(\sL_{t}^{\alpha,x}, \sL_{t}^{x}\right) \leq C_1d(2-\alpha), \quad
\cW_{1}(\mu_\alpha,\mu_2) \leq C_1d(2-\alpha).
\end{align*}

\item If \hyperref[DC']{$(DC')$} holds, then for any $[\alpha_0, \vartheta_0] \subset (1,2)$, there exists $C_2 = C(\bm{\theta},\alpha_0,\vartheta_0,d) > 0$ such that for all $\alpha,\vartheta \in [\alpha_0, \vartheta_0]$ and $t>0$,
\begin{align*}
\cW_{1}\big(\sL_{t}^{\alpha,x}, \sL_{t}^{\vartheta,x}\big) \leq C_2|\vartheta-\alpha|, \quad
\cW_{1}(\mu_\alpha,\mu_{\vartheta}) \leq C_2|\vartheta-\alpha|.
\end{align*}
\end{enumerate}
\end{theorem}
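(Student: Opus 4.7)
The plan is to combine a semigroup-comparison identity with the exponential $\cW_{1}$-contraction provided by the dissipative hypotheses. By the Kantorovich-Rubinstein duality \eqref{de:W1},
\begin{equation*}
\cW_{1}(\sL_{t}^{\alpha,x}, \sL_{t}^{x}) = \sup_{g\in\mathrm{Lip}(1)} |P_t^\alpha g(x) - P_t g(x)|,
\end{equation*}
where $P_t^\alpha$ and $P_t$ denote the Markov semigroups associated with \eqref{sde:L} and \eqref{sde:B}. The invariance $\mu_\alpha P_t^\alpha = \mu_\alpha$ and $\mu_2 P_t = \mu_2$ reduces the statement for the invariant measures to the time-$t$ bound, since the triangle inequality gives
\begin{equation*}
\cW_{1}(\mu_\alpha, \mu_2) \leq \cW_{1}(\mu_\alpha P_t^\alpha, \mu_2 P_t^\alpha) + \int \cW_{1}(\delta_x P_t^\alpha, \delta_x P_t)\, \mu_2(dx),
\end{equation*}
with the first term decaying exponentially in $t$ by contraction and the second controlled by the time-$t$ discrepancy. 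An analogous decomposition, exchanging $(2,P_t)$ for $(\vartheta, P_t^\vartheta)$, handles part (2).

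For the contraction, under \hyperref[DC]{$(DC)$} a reflection/synchronous coupling for the Brownian SDE yields $\cW_{1}(\delta_x P_t, \delta_y P_t) \leq e^{-\lambda_2 t}|x-y|$ with $\lambda_2=\lambda_2(\bm\theta)$ dimension-free; this is precisely what permits the linear-in-$d$ rate in part (1). Under \hyperref[DC']{$(DC')$} a corresponding coupling for the stable SDE (in the spirit of \cite{Liang2020gradient,Wang2020exponential}) produces an analogous estimate for $P_t^\alpha$ with a rate depending additionally on $(d,\alpha_0,\vartheta_0)$, which explains the $d$-dependence of $C_2$ in part (2).

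The core of the proof is the time-$t$ estimate, which I would derive from the Kolmogorov identity
\begin{equation*}
P_t^\alpha g(x) - P_t g(x) = \int_0^t P_s^\alpha\, (\cL^\alpha - \cL^2)(P_{t-s} g)(x)\, ds.
\end{equation*}
The drifts cancel in $\cL^\alpha - \cL^2$, leaving a comparison between the non-local $\alpha$-stable operator and the second-order diffusion, both built from the same coefficient $\sigma(x)$. Since $\int |z|^2 \nu_\alpha(dz)=\infty$, a naive second-order Taylor expansion fails; instead one exploits the rotationally invariant L\'evy symbol $-|z|^\alpha/2$ (for instance through the integral representation $|z|^\alpha = c_\alpha \int_0^\infty (1-\cos\langle z,\xi\rangle)|\xi|^{-d-\alpha}d\xi$, or via the subordination $L_t^{(\alpha)}\stackrel{d}{=} B_{T_t^{\alpha/2}}$) to isolate a factor $(2-\alpha)$ from the generator difference, multiplied by higher derivatives of $u=P_{t-s}g$. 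Lemma \ref{lemma:g}, established via Malliavin calculus, then supplies short-time estimates of the type $\|\nabla^k P_{t-s} g\|_\infty \leq C_k (t-s)^{-(k-1)/\alpha}$ for $g\in\mathrm{Lip}(1)$; combined with the exponential decay of $\|\nabla P_{t-s} g\|_\infty$ for large $t-s$, they make the time integral converge and yield a bound uniform in $t$ and in the initial point $x$. For part (2) the same scheme applies with $\cL^2$ replaced by $\cL^\vartheta$ and extracts a factor $|\vartheta-\alpha|$ instead.

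The principal obstacle will be the generator-difference estimate while preserving the sharp dimensional dependence demanded by the theorem: part (1) requires linear-in-$d$ growth from the generator side alone, since $\lambda_2$ is $d$-free, whereas part (2) permits a $d$-dependent $C_2$. Teasing out the small factor $(2-\alpha)$ or $|\vartheta-\alpha|$ cleanly in the presence of the multiplicative coefficient $\sigma(x)$, and carefully auditing all dimensional constants through both the Malliavin-derivative bounds of Lemma \ref{lemma:g} and the Taylor remainders, is the most delicate piece of the argument.
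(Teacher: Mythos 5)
Your proposal follows essentially the same route as the paper: Kantorovich duality plus Duhamel's formula, a generator-difference estimate extracting the factor $(2-\alpha)$ (or $|\vartheta-\alpha|$) via Taylor expansion and the vanishing of $c_{d,\alpha}$ as $\alpha\uparrow 2$ (the paper's Proposition~\ref{genediff}), the Malliavin-based gradient estimates of Lemma~\ref{lemma:g} for the inner semigroup, and the Wasserstein contraction of Lemma~\ref{ergodicx} for the large-time part of the Duhamel integral and for passing to the invariant measures. Your only deviation is the invariant-measure step, where you solve a fixed-point inequality via $\mu_\alpha=\mu_\alpha P_t^\alpha$, $\mu_2=\mu_2 Q_t$, while the paper simply bounds $\cW_1(\mu_\alpha,\mu_2)\leq\cW_1(\mu_\alpha,\sL_t^{\alpha,x})+\cW_1(\sL_t^{\alpha,x},\sL_t^x)+\cW_1(\sL_t^x,\mu_2)$ and sends $t\to\infty$; both work since only vanishing of the outer terms, not a uniform rate, is needed.
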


\begin{remark}\label{rem:optimal}
There are two highlights on Theorem \ref{th:main}: (i) The convergence rates are optimal, as demonstrated by the Ornstein-Uhlenbeck case in \cite{Deng2023Optimal}. (ii) A key contribution of this paper is the application of the finite-jump approximation method from \cite{Wang2015gradient} and certain commutator estimate\footnote{The commutator of operators $A$ and $B$ is defined as $[A, B] := AB - BA$.} method to establish the short-time regularity estimate for the semigroup of the SDE \eqref{sde:L} (see Subsection \ref{s:B1}). 
Using these methods, we can also extend the results of \cite{Deng2024total} to obtain total variation convergence rates for ergodic measures of multiplicative $\alpha$-stable and $\vartheta$-stable SDEs when $\alpha \uparrow \vartheta$ with $\vartheta \in (1,2]$. However, we do not pursue this extension in this paper, since it involves repetitive calculations and does not have significant new mathematical challenges.
 \end{remark}

This paper is organized as follows: Section \ref{ERMO} presents the proof of our main result, Theorem \ref{th:main}. Specifically, Subsection \ref{sec:prelim} introduces the key notations used in Section \ref{ERMO} and establishes two fundamental results: 
(i) higher-order gradient estimates for Markov semigroups (Lemma \ref{lemma:g}), 
and (ii) exponential ergodicity for SDEs \eqref{sde:L} and \eqref{sde:B} (Lemma \ref{ergodicx}). 
Building on Lemma \ref{lemma:g}, we obtain estimates for the differences of infinitesimal generators
for SDEs \eqref{sde:L} and \eqref{sde:B} (Proposition \ref{genediff}). 
Using these preparatory results from Subsection \ref{sec:prelim}, 
we prove Theorem \ref{th:main} in Subsection \ref{sub:PMT}. Section \ref{sec:ge} is devoted to proving Lemma \ref{lemma:g} via Malliavin calculus. 
Subsection \ref{s:B1} establishes part (i) of Lemma \ref{lemma:g}, 
which provides gradient estimates for SDE \eqref{sde:L}. 
Subsection \ref{s:B2} addresses part (ii) of Lemma \ref{lemma:g}, 
focusing on gradient estimates for SDE \eqref{sde:B}. Appendixes \ref{supp:sec} and  \ref{sec:secJf} provide  supplementary material containing necessary results and proof details that are not the key part of this paper. Under dissipativity conditions in \hyperref[DC]{$(DC)$} and \hyperref[DC']{$(DC')$}, we use the coupling method to discuss the exponential ergodicities for SDEs  \eqref{sde:B} and \eqref{sde:L} respectively in Appendix \ref{supp:sec}. We use the method of finite-jump approximation developed in \cite{Wang2015gradient} to investigate certain regularities for the semigroup of SDE \eqref{sde:L}, while some approximation lemmas and moment estimates are proved in Appendix  \ref{sec:secJf}. 
Throughout this paper, we use the following conventions:

\textit{Function spaces}: For $m,n\in\mathbb{N}$, let $\cC(\mathbb{R}^d,\mathbb{R}^m)$ be the space of continuous functions $f: \mathbb{R}^d \to \mathbb{R}^m$. We further denote by
\begin{align*}
&\quad\ \cC^n(\mathbb{R}^d,\mathbb{R}^m)  := \{f \in \cC(\mathbb{R}^d,\mathbb{R}^m) : f\text{ is }n\text{-times continuously differentiable}\}, \\
&\cC^\infty_0(\mathbb{R}^d,\mathbb{R}^m)  :=\{f \in \cC(\mathbb{R}^d,\mathbb{R}^m): f \text{ is a smooth function that vanishes at infinity}\}.
\end{align*}
Specifically, when $m=1$, let $\cC^n(\mathbb{R}^d):=\cC^n(\mathbb{R}^d,\mathbb{R})$ and $\cC^\infty_0(\mathbb{R}^d):=\cC^\infty_0(\mathbb{R}^d,\mathbb{R})$.

\textit{Matrix operators}: For matrices $A,B\in\mathbb{R}^d\otimes\mathbb{R}^d$, we denote by
$$ \langle A,B\rangle_{\rm HS} := \sum_{i,j=1}^d A_{ij}B_{ij}, \quad \|A\|_{\rm HS} := \sqrt{\langle A,A\rangle_{\rm HS}} $$
with operator norm
$$ \|A\|_{\rm op} := \sup_{x\in\partial\mathrm{B}}|Ax| = \sup_{x,y\in\partial\mathrm{B}}|\langle A, xy^*\rangle_{\rm HS}|,$$
where $\partial\mathrm{B}:=\{z\in\mR^d\mid |z|=1\}$ and $*$ denotes transpose. It is standard that $\|A\|_{\rm op} \leq \|A\|_{\rm HS} \leq \sqrt{d}\|A\|_{\rm op}$.

\textit{Directional derivatives}: Let $n\in\mN$. For $f\in\cC^n(\mathbb{R}^d,\mathbb{R}^m)$ and $v_i\in\mathbb{R}^d$ with $i=1,\cdots,n$, denote by
\begin{align*}
&\delta_{v_1}f(x)  := f(x+v_1)-f(x),\qquad
\nabla_{v_1}f(x) := \lim_{\epsilon\to 0}\epsilon^{-1}\delta_{\epsilon v_1}f(x)=\langle v_1,\nabla f(x)\rangle,
\end{align*}
where $\langle\cdot,\cdot\rangle$ is the inner product in $\mathbb{R}^d$.
The $n$-th gradient operator is defined as
$$ \nabla^n f(x) \in \underbrace{(\mathbb{R}^m\otimes\mathbb{R}^d)\otimes\cdots\otimes(\mathbb{R}^m\otimes\mathbb{R}^d)}_{n \text{ copies}} $$
has components $D^\gamma f(x) := \partial_{x_1}^{\gamma_1}\cdots\partial_{x_d}^{\gamma_d}f(x)$ for multi-indices $\gamma:=(\gamma_1,\cdots,\gamma_d)\in\mathbb{N}_0^d$ with $\sum_{i=1}^d\gamma_i=n$, where $\mathbb{N}_0 := \mathbb{N}\cup\{0\}$. Then the operator norm of the $n$-th gradient operator is defined as
$$ \|\nabla^n f(x)\|_{\rm op} := \sup_{x_i\in \partial\mathrm{B}}\big|\mathsmaller{\prod_{i=1}^n}\nabla_{x_i}f(x)\big|, \quad \|\nabla^n f\|_\infty := \sup_{x\in\mathbb{R}^d}\|\nabla^n f(x)\|_{\rm op} $$
with $\nabla^0 f := f$ and $\nabla^1 f := \nabla f$. The subscript ``op" may be omitted when unambiguous.

\textit{Other notations}: We denote by $s\wedge t := \min(s,t)$ and $a^{-}:=\max(0,-a)$. Let $C = C(\theta)>1$ denote positive constant depending on $\theta$, possibly varying between instances. We denote by
$$A \lesssim_\theta B\,(\text{or}\ B\gtrsim_{\theta}A)  \iff A \leq C(\theta)B,\qquad A \asymp_\theta B \iff C(\theta)^{-1}B \leq A \leq C(\theta)B,$$
while we do not pay attention to the dependence of the constant $C$ in notations $\lesssim$, $\gtrsim$ and $\asymp$.

\section{Proof of Main Theorem \ref{th:main}} \label{ERMO}

\subsection{Preliminaries}\label{sec:prelim}

Under Assumption \ref{hyp:A}, SDEs \eqref{sde:L} and \eqref{sde:B} with arbitrary initial value $x\in\mR^d$ each admit unique strong solutions $(X^x_t)_{t \ge 0}$ and $(Y^x_t)_{t \ge 0}$, respectively. The associated Markov semigroups are defined as
\begin{align}\label{A4}
P^{\alpha}_t f(x) := \mE f(X^x_t),\qquad
Q_t f(x) := \mE f(Y^x_t),\quad  \forall f\in\cC^\infty_0(\mR^d).
\end{align}
A straightforward application of It\^o's formula reveals that their infinitesimal generators take the form:
\begin{equation*}
\begin{aligned}
&\qquad\quad\mathcal{A}^{\!\alpha} f(x) := \mathcal{L}^\alpha_\sigma f(x) + \langle b(x), \nabla f(x)\rangle, \\
&\mathcal{A}^{\scriptscriptstyle Q} f(x) := \frac{1}{2}\langle\nabla^2 f(x),(\sigma\sigma^*)(x)\rangle_{\mathrm{HS}} + \langle b(x), \nabla f(x)\rangle,
\end{aligned}
\end{equation*}
where the nonlocal operator $\mathcal{L}^\alpha_\sigma$ is defined by: 
\begin{align}\label{op:levy}
\mathcal{L}^\alpha_\sigma f(x) = c_{d,\alpha}\int_{\mathbb{R}^d \setminus \{0\}} \frac{\delta_{\sigma(x)z}f(x) - \nabla_{\sigma(x)z}f(x)}{|z|^{d+\alpha}}\dif z,\ \text{where}\  c_{d,\alpha} := \frac{\alpha 2^{\alpha-2}\Gamma\left(\frac{d+\alpha}{2}\right)}{\pi^{d/2}\Gamma\left(\frac{2-\alpha}{2}\right)}.
\end{align}
Notably, it holds that $\Delta^{\frac\alpha2} f = 2\mathcal{L}^\alpha_{\mathbb{I}} f$, where $\mI$ denotes the identity matrix. The semigroup operators satisfy the Kolmogorov equations in both forward and backward formulations:
\begin{align}\label{A5}
\frac{\dif}{\dif t} P^{\alpha}_t f = P^{\alpha}_t \mathcal{A}^{\!\alpha}f = \mathcal{A}^{\!\alpha} P^{\alpha}_t f,
\quad \frac{\dif}{\dif t} Q_t f &= Q_t \mathcal{A}^{\scriptscriptstyle Q}f = \mathcal{A}^{\scriptscriptstyle Q} Q_t f.
\end{align}

The following gradient estimates for the semigroups associated with SDEs \eqref{sde:L} and \eqref{sde:B} provide essential regularity properties for our main convergence analysis while the proofs are postponed in Section \ref{sec:ge}.
\begin{lemma}[Gradient Estimates]\label{lemma:g}
For any $g\in\mathrm{Lip}(1)$, under Assumption \ref{hyp:A}, the following gradient estimates hold:
\begin{enumerate}
    \item There exists $C_1 = C(c_2,c_3,\alpha_0,\vartheta_0) > 0$ such that for all $t \in (0,1]$ and $\alpha \in [\alpha_0,\vartheta_0]$ with $1<\alpha_0<\vartheta_0<2$,
    \begin{align}
        \|\nabla^n P^\alpha_t g\|_{\infty} \lesssim_{C_1} t^{-\frac{n-1}{\alpha}}, \quad n = 1,2. \label{est:grad123}
    \end{align}

    \item There exists $C_2 = C(c_2,c_3) > 0$ such that for all $t \in (0,1]$,
\begin{align}
        \|\nabla^n Q_t g\|_{\infty} \lesssim_{C_2} t^{-\frac{n-1}{2}}, \quad n = 1,2,3. \label{est:grad12}
    \end{align}
\end{enumerate}
\end{lemma}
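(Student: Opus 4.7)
The plan is to establish a Bismut--Elworthy--Li (BEL) type formula for each semigroup via Malliavin calculus, trading a derivative of $g$ for a random weight whose short-time blow-up matches the advertised rate. Since $g\in\mathrm{Lip}(1)$ is not assumed bounded, the $n=1$ bound is obtained not from a BEL weight but directly from the first-variation (derivative flow) process $J_t^x:=\nabla_v X_t^x$ (resp.\ $\nabla_v Y_t^x$), whose $L^p$-norms are uniformly controlled on $(0,1]$ under Assumption (A2). For $n\geq 2$, I will use the Markov decomposition $P^\alpha_t=P^\alpha_{t/2}\circ P^\alpha_{t/2}$ (resp.\ $Q_t=Q_{t/2}\circ Q_{t/2}$): apply the derivative flow to the outer factor and the BEL formula to the inner one, and iterate as many times as the order of derivative demands.

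\textbf{Brownian case (Part (ii)).} For $n=1$, the chain rule gives $\nabla_v Q_tg(x)=\mathbb{E}[\langle\nabla g(Y_t^x),J_t^x v\rangle]$; under (A2), $\sup_{t\in(0,1]}\mathbb{E}\|J_t^x\|^p<\infty$ so $\|\nabla Q_tg\|_\infty\lesssim 1$. For $n=2$, write $Q_tg=Q_{t/2}(Q_{t/2}g)$, apply the classical BEL formula $\nabla_v Q_{t/2}h(x)=\tfrac{2}{t}\,\mathbb{E}\bigl[h(Y_{t/2}^x)\int_0^{t/2}\langle\sigma^{-1}(Y_s^x)J_s^xv,\dif B_s\rangle\bigr]$ with $h=Q_{t/2}g$, and then differentiate once more via the outer flow, using the already-established $n=1$ bound on $Q_{t/2}g$; the $t^{-1/2}$ rate comes from the $L^2$-norm of the stochastic integral divided by $t$. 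For $n=3$, a second iteration over a triple Markov decomposition and the uniform bound on $\|\nabla^2 Q_{t/3}g\|_\infty$ from the previous step yields the $t^{-1}$ rate.

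\textbf{Stable case (Part (i)).} The Brownian Malliavin calculus is unavailable, so I will use the finite-jump approximation of Wang--Xu--Zhang: decompose the Poisson random measure of $L^{(\alpha)}$ into jumps of size $>\varepsilon$ (a compound Poisson part with finitely many jumps on $[0,t]$) and its compensated complement. Conditioning on the large-jump times, one performs integration by parts in the directions of the jump magnitudes; the resulting BEL formula has the form $\nabla_v P^\alpha_t h(x)=\mathbb{E}[h(X_t^x)\,\Xi^{\alpha,v}_t]$, where $\mathbb{E}|\Xi^{\alpha,v}_t|^2\lesssim t^{-2/\alpha}|v|^2$ with a constant depending only on $c_2,c_3,\alpha_0,\vartheta_0$. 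Uniform ellipticity (A1) guarantees that the associated Malliavin covariance is invertible with controlled moments, while the regularity (A2) controls the derivative of the conditional characteristic function in the jump magnitudes. The $n=1$ bound again follows from the derivative flow, and the $n=2$ bound from the Markov decomposition $P^\alpha_t=P^\alpha_{t/2}\circ P^\alpha_{t/2}$, combining the outer flow with the inner BEL formula and producing the claimed $t^{-1/\alpha}$ rate.

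\textbf{Main obstacle.} The hard part will be the uniform-in-$\alpha$ control of the BEL weight $\Xi^{\alpha,v}_t$ together with its passage to the limit $\varepsilon\downarrow 0$: since $c_{d,\alpha}\to\infty$ as $\alpha\uparrow 2$, one must carefully balance the contributions of small and large jumps so that the resulting estimate stays uniform on $[\alpha_0,\vartheta_0]\subset(1,2)$. This is precisely where the commutator estimate alluded to in Remark \ref{rem:optimal} enters: differentiating $\Xi^{\alpha,v}_t$ in $x$ requires interchanging the perturbation of the initial condition with the perturbation of the jump magnitudes used to build the Malliavin weight, generating commutator terms of the schematic form $[\nabla_x,\nabla_{\text{jump}}]\sigma$ which must be absorbed using (A2) together with moment bounds for the derivative flow of the finite-jump approximation. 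Once these commutator terms are controlled, the iterative scheme of Section~\ref{sec:ge} delivers both parts of Lemma~\ref{lemma:g}.
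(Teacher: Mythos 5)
Your proposal correctly identifies the architectural pieces of the argument: Brownian subordination and the Wang--Xu--Zhang finite-jump approximation for the stable part, the derivative-flow bound for $n=1$, Malliavin integration-by-parts for $n\geq 2$, and the role of a commutator between spatial differentiation and the Malliavin derivative along the variation direction. But the way you propose to deploy these pieces is a genuinely different route from the paper's.

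The paper never splits the semigroup $P_t=P_{t/2}\circ P_{t/2}$. Instead it derives explicit $n$-th derivative representations in which the test function enters only through its bounded first gradient $\nabla g$: for the Brownian case this is \eqref{key:eq12}--\eqref{key:eq123},
\begin{equation*}
\nabla_{v_1}\nabla_v Q_t g(x) = t^{-1}\mathbb{E}\bigl[\nabla g(Y_t^x)\,[\nabla_{v_1},D_h]Y_t^x\bigr] + t^{-1}\mathbb{E}\bigl[\nabla g(Y_t^x)\,\nabla_{v_1}Y_t^x\,\delta(h)_t\bigr],
\end{equation*}
and the stable analogue is Proposition~\ref{formermain}, obtained via the finite-jump approximation on a fixed subordinator path. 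Since $\|\nabla g\|_\infty\leq 1$, all that remains is to estimate the moments of the commutator $[\nabla_{v_1},D_h]Y_t^x$ and of the divergence $\delta(h)_t$ (and of their iterated versions for $n=3$); these carry sizes $t$ and $\sqrt{t}$, whence $t^{-(n-1)/2}$. In the stable case the analogous objects are $\mathbf Z^{v,v_1}_{\ell,t}$ and $\mathbf L^v_{\ell,t}$, and Lemma~\ref{lastmoment} gives the uniform moment bounds on $[\alpha_0,\vartheta_0]$, the factor $\e^{C(2-\alpha)^{-1}}$ being harmless since $\alpha\leq\vartheta_0<2$. Crucially, the argument never has to touch the unbounded function $g$ directly, which is why no Markov splitting and no ``full'' BEL formula $\nabla_v P_t h=\mathbb{E}[h(X_t)\Xi_t]$ are needed.

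Your route, by contrast, applies a full BEL formula to $h=P^\alpha_{t/2}g$ and then differentiates once more. That can be made to work, but it creates a term $\mathbb{E}[h(X^x_{t/2})\,\nabla_{v_1}\Xi^{\alpha,v}_{t/2}]$ in which the linear-growth function $h$ is paired with the derivative of the weight. This requires an extra step you do not mention: use that $\nabla_{v_1}\Xi^{\alpha,v}_{t/2}$ has zero mean (since $\mathbb{E}[\Xi^{\alpha,v}_{t/2}]\equiv 0$ for all $x$), subtract $h(x)$, and pair $(\mathbb{E}|X^x_{t/2}-x|^q)^{1/q}\lesssim t^{1/\alpha}$ for some $q<\alpha$ against $\|\nabla_{v_1}\Xi^{\alpha,v}_{t/2}\|_{q'}$. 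That is the one substantive gap in your sketch; the paper sidesteps it entirely by keeping $\nabla g$ inside. One further small imprecision: in the stable case the derivative flow $\nabla_v X_t^x$ has uniform $L^q$-bounds only for $q<\alpha$ (Lemma~\ref{l:MomXGraX}), not for all $p$, so the H\"older exponents in your argument must be chosen accordingly. Apart from these points, your parameter dependence, your identification of the commutator as the crux, and your appeal to the finite-jump approximation are all on target.
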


The following lemma establishes exponential convergence to equilibrium, with detailed proofs provided in Appendix \ref{supp:sec}.

\begin{lemma}[Exponential Ergodicity]\label{ergodicx}
Let $\alpha\in[\alpha_0,\vartheta_0]$ with $\alpha_0,\vartheta_0 \in (1,2)$. Under Assumption \ref{hyp:A}, the following two exponential ergodicities hold:

(i) When \hyperref[DC']{$(DC')$} holds, SDE \eqref{sde:L} exhibits exponential ergodicity with ergodic measure $\mu_{\alpha}$: There exist constants $C,\lambda>0$ depending on $d, \alpha_0,\vartheta_0$, and $\bm{\theta}$ such that
\begin{align*}
\cW_{1}\left(\sL_{t}^{\alpha,x},\sL_{t}^{\alpha,y}\right) \leq C\e^{-\lambda t}|x-y|,\quad \forall x,y\in\mathbb{R}^d,\ t>0.
\end{align*}

(ii) When \hyperref[DC]{$(DC)$} holds, SDE \eqref{sde:B} satisfies exponential ergodicity with invariant measure $\mu_2$: There exist constants $C_1,\lambda_1>0$ only depending on $\bm{\theta}$ such that
\begin{align*}
\cW_{1}\left(\sL_{t}^{x},\sL_{t}^{y}\right) \leq C_1\e^{-\lambda_1 t}|x-y|,\quad \forall x,y\in\mathbb{R}^d,\ t>0.
\end{align*}
\end{lemma}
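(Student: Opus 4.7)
The plan is to prove both parts by coupling arguments: for each SDE, construct a coupling $(U_{t},V_{t})$ of two copies starting at $x$ and $y$, apply It\^o's formula to a carefully chosen concave functional $\phi(|U_{t}-V_{t}|)$, and close a Gr\"onwall inequality using the appropriate dissipativity. Existence and uniqueness of $\mu_{\alpha}$ and $\mu_{2}$ together with the convergence $\cW_{1}(\sL_{t}^{\cdot,x},\mu_{\cdot})\to 0$ then follow from the resulting exponential contraction via the standard Cauchy argument in $(\mathcal{P}_{1}(\R^{d}),\cW_{1})$, combined with the Lyapunov moment bound implied by \hyperref[DC]{$(DC)$} / \hyperref[DC']{$(DC')$}. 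Because the two drivers have different path structures, the couplings have to be constructed differently.

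For part (ii), I would use a reflection coupling in the spirit of Eberle, adapted to the multiplicative setting. The decomposition $\sigma\sigma^{*}=\sigma_{0}^{2}\mI+\tilde{\sigma}\tilde{\sigma}^{*}$ built into \hyperref[DC]{$(DC)$} splits the noise into an isotropic part and a remainder: I drive the $\sigma_{0}\mI$ component of $Y^{x}$ and $Y^{y}$ by Brownian motions reflected across the hyperplane orthogonal to $Z_{t}:=Y_{t}^{x}-Y_{t}^{y}$, and the $\tilde{\sigma}$ component synchronously by a common Brownian motion. It\^o's formula then turns the quadratic form on the left side of \hyperref[DC]{$(DC)$} into the leading-order drift of $|Z_{t}|$, and composing with a bounded concave $\phi$ (with $\phi(0)=0$, $\phi'(0)=1$, $\phi''<0$, tuned as in Eberle) gives $\tfrac{\dif}{\dif t}\mE\phi(|Z_{t}|)\le -\lambda_{1}\mE\phi(|Z_{t}|)$, from which $\cW_{1}(\sL_{t}^{x},\sL_{t}^{y})\lesssim \e^{-\lambda_{1}t}|x-y|$ follows via $\phi(r)\asymp r$ on bounded intervals.

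For part (i), the jumps of $L^{\scriptscriptstyle\!(\alpha)}$ obstruct a direct reflection coupling, so I would replace it by a refined basic coupling on the small-jump part of the L\'evy measure in the spirit of \cite{Liang2020gradient,Luo2019refined,Wang2020exponential}. Fixing a cutoff radius $r_{0}>0$, on jumps of $L^{\scriptscriptstyle\!(\alpha)}$ with $|z|\le r_{0}$ a fraction of the intensity is used to pair the jumps of $X^{x}$ and $X^{y}$: rotational invariance of the $\alpha$-stable L\'evy measure together with uniform ellipticity \hyperref[A1]{$(A1)$} lets one transplant the pairing through $\sigma(X_{s-})$. The complementary L\'evy mass drives the two processes synchronously, and under \hyperref[DC']{$(DC')$} the synchronous part contracts the pathwise difference exponentially between jumps. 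A L\'evy--It\^o calculation applied to $f(|X_{t}^{x}-X_{t}^{y}|)$ for a suitable concave $f$ yields $\mE f(|X_{t}^{x}-X_{t}^{y}|)\le \e^{-\lambda t}f(|x-y|)$, delivering the claimed $\cW_{1}$-contraction.

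The delicate point, and the reason the argument is carried out in the appendix rather than quoted off the shelf, is that one must track the dependence of the constants $C,\lambda$ and $C_{1},\lambda_{1}$ \emph{explicitly} on the stated parameters. In part (i) the coupling rate produced by the refined basic coupling depends on $\alpha$ both through the normalising constant $c_{d,\alpha}$ of \eqref{op:levy} (which is uniformly bounded above and away from $0$ on $[\alpha_{0},\vartheta_{0}]\subset(1,2)$) and through the $\alpha$-stable tail mass near the current separation $|X^{x}-X^{y}|$, as well as on $d$ through the Jacobian of the push-forward by $\sigma$; producing a uniform lower bound on this rate is the main analytical task. In part (ii) the analogous issue is to check that the Eberle function $\phi$ and the constant $\sigma_{0}$ can be chosen purely in terms of $\bm{\theta}$, so that $C_{1},\lambda_{1}$ are genuinely universal over all $(b,\sigma)$ satisfying Assumption \ref{hyp:A} and \hyperref[DC]{$(DC)$}.
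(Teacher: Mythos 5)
Your proposal matches the paper's own proof in both structure and key devices: for part (ii) the paper uses exactly the Eberle-style reflection coupling with the $\sigma\sigma^{*}=\sigma_{0}^{2}\mI+\tilde\sigma\tilde\sigma^{*}$ split and a concave auxiliary function $\Psi$, and for part (i) it uses a refined basic coupling of the truncated $\alpha$-stable L\'evy measure transplanted through $\sigma$ (with the map $\Psi(z)=\sigma^{-1}(y)[\sigma(x)z+(x-y)_{\kappa}]$), closing the argument via the coupling rate bound of Lemma~\ref{le:A1} and \cite[Theorem 4.4]{Liang2020gradient}. You also correctly identify the main technical burden as tracking the explicit parameter dependence, which is precisely what Lemma~\ref{le:A1} and the uniform bound \eqref{CS12} on $c_{d,\alpha}S_{d}$ for $\alpha\in[\alpha_{0},\vartheta_{0}]$ accomplish.
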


By the preceding two lemmas, we derive cartain operator difference bounds in the following proposition. These bounds constitute the foundational components for the proof of Theorem~\ref{th:main}.
\begin{proposition}\label{genediff}
For any $g\in\mathrm{Lip}(1)$, under Assumption \ref{hyp:A}, the following two estimates hold:
\begin{enumerate}
    \item[(i)] For any $\alpha_0 \in (1,2)$, there exists $C_1 = C(c_2,c_3,\alpha_0) > 0$ such that for all $t \in (0,1]$, $x\in\mR^d$ and $\alpha \in [\alpha_0,2)$:
    \begin{align}\label{genediff:1}
    \sup_{g \in \mathrm{Lip}(1)}\left|(\mathcal{A}^{\!\alpha}-\mathcal{A}^{\scriptscriptstyle Q})Q_tg(x)\right|
    \lesssim_{C_1}d\left((3-\alpha)t^{-\frac{1}{2}} - (3-\alpha)^{-1}t^{\frac{1-\alpha}{2}}\right).
    \end{align}

    \item[(ii)] For any $[\alpha_0,\vartheta_0] \subset (1,2)$, there exists $C_2 = C(c_2,c_3,\alpha_0,\vartheta_0) > 0$ such that for all $t \in (0,1]$, $x\in\mR^d$  and $\alpha, \vartheta \in [\alpha_0,\vartheta_0]$:
\begin{align}\label{genediff:2}
    \sup_{g \in \mathrm{Lip}(1)}\left|(\mathcal{A}^{\!\alpha}-\mathcal{A}^\vartheta)P^\alpha_tg(x)\right|
    \lesssim_{C_2}d|\vartheta-\alpha|t^{-\frac{1}{\alpha}}.
    \end{align}
\end{enumerate}
\end{proposition}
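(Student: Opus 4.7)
The plan is to exploit the fact that the drift term $\langle b(x),\nabla\cdot\rangle$ is common to all three generators, so the differences reduce to nonlocal/local operator contrasts: part (i) becomes an estimate of $\mathcal{L}^{\alpha}_{\sigma}Q_tg(x)-\tfrac{1}{2}\langle\nabla^{2}Q_tg(x),(\sigma\sigma^{*})(x)\rangle_{\rm HS}$, and part (ii) becomes an estimate of $(\mathcal{L}^{\alpha}_{\sigma}-\mathcal{L}^{\vartheta}_{\sigma})P^{\alpha}_{t}g(x)$. For both parts I would split the L\'evy integral at a threshold $|z|\le\rho$: Taylor expansion of $z\mapsto f(x+\sigma(x)z)$ around the origin on the small region extracts the dominant contribution, while on the large region the antisymmetry identity $\int_{|z|>\rho}\nabla_{\sigma(x)z}f(x)/|z|^{d+\alpha}\,\dif z=0$ combined with a first-derivative bound controls the tail. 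The final step substitutes the gradient estimates from Lemma \ref{lemma:g}.

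For part (i), second-order Taylor on $|z|\le\rho$ isolates a main term of the form $\psi(\alpha)\,\rho^{2-\alpha}\cdot\tfrac12\langle\nabla^{2}Q_tg(x),(\sigma\sigma^{*})(x)\rangle_{\rm HS}$ for an explicit scalar $\psi(\alpha)$ (involving $c_{d,\alpha}$ and the surface measure of the unit sphere). Using the $\Gamma$-function identity $(2-\alpha)\Gamma((2-\alpha)/2)=2\Gamma((4-\alpha)/2)$ I would compute $\psi(2)=1$ and hence $|\psi(\alpha)-1|\lesssim(2-\alpha)$ uniformly for $\alpha\in[\alpha_0,2)$. The third-order Taylor remainder on $|z|\le\rho$ contributes $\lesssim c_{d,\alpha}\,\|\nabla^{3}Q_tg\|_\infty\,\rho^{3-\alpha}/(3-\alpha)$ (times surface measure), while the $|z|>\rho$ piece, after antisymmetry eliminates $\nabla_{\sigma z}Q_tg$, contributes $\lesssim c_{d,\alpha}\,\|\nabla Q_tg\|_\infty\,\rho^{1-\alpha}/(\alpha-1)$. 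Choosing $\rho=\sqrt{t}$, using that $c_{d,\alpha}$ times the surface measure is $\lesssim d(2-\alpha)$, and substituting the Brownian gradient bounds $\|\nabla^{n}Q_tg\|_\infty\lesssim t^{-(n-1)/2}$ for $n=1,2,3$ from Lemma \ref{lemma:g}(ii) would deliver \eqref{genediff:1}.

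For part (ii), I would apply the mean value theorem to the L\'evy kernel $\beta\mapsto c_{d,\beta}/|z|^{d+\beta}$, producing the explicit factor $(\alpha-\vartheta)$ together with $(c'_{d,\xi}-c_{d,\xi}\log|z|)/|z|^{d+\xi}$ for some $\xi\in[\alpha\wedge\vartheta,\alpha\vee\vartheta]$. Since $c_{d,\xi}$ and $|c'_{d,\xi}|$ are bounded uniformly on the compact subinterval $[\alpha_0,\vartheta_0]\subset(1,2)$, the log-enhanced kernel is integrable against the Taylor-type estimates. Splitting at $|z|\le 1$, the small region is handled by second-order Taylor controlled by $\|\nabla^{2}P^{\alpha}_tg\|_\infty$ with the integrable weights $\int_0^1(1+|\log r|)r^{1-\xi}\,\dif r<\infty$, and the large region by antisymmetry plus the first-derivative bound $\|\nabla P^{\alpha}_tg\|_\infty\lesssim 1$ together with $\int_1^\infty (1+\log r)\,r^{-\xi}\,\dif r<\infty$. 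Substituting $\|\nabla^{n}P^{\alpha}_tg\|_\infty\lesssim t^{-(n-1)/\alpha}$ from Lemma \ref{lemma:g}(i) for $n=1,2$ then yields \eqref{genediff:2}.

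The principal obstacle lies in part (i): one must capture a near-cancellation between $\psi(\alpha)\,\rho^{2-\alpha}\cdot M$ and $M:=\tfrac12\langle\nabla^{2}Q_tg,(\sigma\sigma^{*})\rangle_{\rm HS}$ as $\alpha\uparrow 2$, which is more delicate than a crude $O(1)$ bound on the prefactor. This requires both the exact evaluation $\psi(2)=1$ via the $\Gamma$-function identity and the quantitative logarithmic-type estimate $|\rho^{2-\alpha}-1|\lesssim(2-\alpha)(1+|\log\rho|)$ with $\rho=\sqrt{t}$; only these together reproduce the mixed structure of \eqref{genediff:1} (the subtraction $(3-\alpha)t^{-1/2}-(3-\alpha)^{-1}t^{(1-\alpha)/2}$ vanishes at $\alpha=2$ and encodes precisely this near-cancellation). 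Once the quantitative cancellation is in hand, the remaining error analysis reduces to routine manipulation of the kernel $c_{d,\alpha}/|z|^{d+\alpha}$ together with the gradient bounds; this subtlety is automatically avoided in part (ii) because the MVT in $\beta$ produces the factor $|\alpha-\vartheta|$ directly.
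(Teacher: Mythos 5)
Your decomposition for part (i) --- a single split of the L\'evy integral at $|z|=\rho$ with $\rho=\sqrt t$ --- cannot produce the stated bound, and the reason is precisely the issue you flag as ``the principal obstacle.'' After extracting the second-order term on $|z|\le\rho$, the scalar you must compare against the Brownian coefficient is $\psi(\alpha)\rho^{2-\alpha}$, not $\psi(\alpha)$. Since $|\rho^{2-\alpha}-1|\asymp(2-\alpha)|\log\rho|$, putting $\rho=\sqrt t$ injects a genuine factor $|\log t|$: your main-term error is of order $(2-\alpha)|\log t|\,t^{-1/2}$. But the claimed right-hand side, $d\bigl((3-\alpha)t^{-1/2}-(3-\alpha)^{-1}t^{(1-\alpha)/2}\bigr)=d\,t^{-1/2}\bigl[(3-\alpha)-(3-\alpha)^{-1}t^{(2-\alpha)/2}\bigr]$, has a bracket that stays bounded in $t$ (it tends to $3-\alpha$ as $t\downarrow0$ at fixed $\alpha$). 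So for $\alpha$ bounded away from $2$, say $\alpha=\alpha_0$, and $t\downarrow0$, your estimate exceeds the target by a factor of order $|\log t|$; the log-type estimate you invoke does not reproduce the structure of \eqref{genediff:1}, it overshoots it.

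The paper avoids this by decoupling the two scales. The L\'evy integral is split at the \emph{fixed} threshold $|z|=1$, so the second-order main term comes out as $\tfrac{c_{d,\alpha}S_d}{2d(2-\alpha)}\langle\nabla^2 f,\sigma\sigma^*\rangle_{\mathrm{HS}}$ with no $\rho^{2-\alpha}$ factor at all; its deviation from the Brownian term is then governed purely by $\bigl|\tfrac{c_{d,\alpha}S_d}{d(2-\alpha)}-1\bigr|\lesssim(2-\alpha)\log(1+d)$, independently of $t$. The $\sqrt t$ scale appears only inside the Taylor remainder $\nabla^2 f(x+r\sigma z)-\nabla^2 f(x)$, which the paper bounds by the interpolation $(\|\nabla^3 f\|_\infty\,|z|)\wedge\|\nabla^2 f\|_\infty\lesssim(t^{-1}|z|)\wedge t^{-1/2}$; integrating that against $|z|^{2-d-\alpha}$ on $|z|\le1$ is exactly what produces the two-piece structure $t^{-1/2}-(3-\alpha)^{-1}t^{(1-\alpha)/2}$. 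A single split point cannot serve both roles: with $\rho=1$ the third-order remainder diverges like $t^{-1}$, while with $\rho=\sqrt t$ the main term picks up $\log t$. What is needed is a zeroth-order Taylor expansion of $\nabla^2 f$ about $x$ with the min-interpolation on its remainder, not a third-order Taylor expansion of $f$ with an optimized cutoff.

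Your part (ii) via the mean-value theorem in the stability index is essentially the paper's decomposition into a kernel-variation and a constant-variation piece, and the integrals you write converge. The one thing your sketch glosses over is the $d$-dependence: $c_{d,\xi}$, $c_{d,\xi}'$ and $S_d$ all grow with $d$, and recovering precisely the factor $d$ (rather than, say, $d\log d$) is the reason the paper routes $c_{d,\alpha}S_d$ through Euler's reflection formula and the auxiliary function $G(x)=x4^x\Gamma(d/2+x)\Gamma(x)\sin(\pi x)$. Declaring $c_{d,\xi}$ and $c_{d,\xi}'$ ``bounded uniformly on $[\alpha_0,\vartheta_0]$'' is true for each fixed $d$ but hides the growth you ultimately need to track to land on the stated prefactor.
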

\begin{proof}[Proof of Proposition \ref{genediff}]
\textbf{Part (i):} Let $t\in(0,1]$ and $x\in\mR^d$ be fixed. Denoting by $f := Q_tg$, we have
\begin{align}\label{A1}
\|\nabla^n f\|_\infty\overset{\eqref{est:grad12}}\lesssim_{c_2,c_3} t^{-\frac{n-1}{2}},\quad \forall n = 1,2,3.
\end{align}
By \eqref{op:levy}, denoting by $A:=\sigma(x)$, we consider the decomposition:
\begin{equation*}
    (\mathcal{A}^{\!\alpha}-\mathcal{A}^{\scriptscriptstyle Q})f(x) = J_1 + J_2,
\end{equation*}
where
\begin{align*}
   &\qquad\qquad\qquad J_1 := c_{d,\alpha}\int_{|z|>1}\frac{f(x+A z)-f(x)}{|z|^{d+\alpha}}dz, \\
   & J_2 := c_{d,\alpha}\int_{0<|z|\leq1}\frac{\delta_{A z}f(x)-\nabla_{A z}f(x)}{|z|^{d+\alpha}}dz - \frac{1}{2}\langle\nabla^2f(x),AA^{*}\rangle_{\mathrm{HS}}.
\end{align*}
\textbf{Estimation of $J_1$:} By Assumption \ref{hyp:A}, using Taylor's expansion with integral remainder, we have
\begin{align*}
    \ |J_1| &\leq c_{d,\alpha}\int_{|z|>1}\int^1_0|\left\langle \nabla f(x+rA z),A z\right\rangle|\dif r\frac{\dif z}{|z|^{d+\alpha}}\\
&\lesssim_{c_2} c_{d,\alpha}\|\nabla f\|_{\infty}\int_{|z|>1}|z|^{1-d-\alpha}\dif z
\lesssim_{c_2}\|\nabla f\|_{\infty}\frac{c_{d,\alpha}S_d}{\alpha-1},
\end{align*}
where $S_d:=\frac{2\pi^{\frac{d}{2}}}{\Gamma(\frac{d}{2})}$ is the surface area of the unit ball in $\mR^d$.
For any $\alpha_0\in(1,2)$ and $\alpha \in [\alpha_0,2)$, it holds that
\begin{align}
\frac{c_{d,\alpha} S_d}{\alpha-1}
\overset{\eqref{op:levy}}\leq \frac{\alpha2^{\alpha-2}\Gamma
(\frac{d+\alpha}{2})}{\Gamma
(2-\frac\alpha2)
\Gamma(\frac{d}{2})}
\cdot\frac{2-\alpha}{\alpha_0-1}
\lesssim \frac{\Gamma
(\frac{d+2}{2})}{
\Gamma(\frac{d}{2})}
\cdot\frac{2-\alpha}{\alpha_0-1}\lesssim_{\alpha_0}d (2-\alpha).
 \label{CS12}
\end{align}
Then for any $\alpha_0\in(1,2)$ and $\alpha \in [\alpha_0,2)$, we have
\begin{align*}
    |J_1| \lesssim_{c_2,\alpha_0}d (2-\alpha)\|\nabla f\|_{\infty}\overset{\eqref{A1}}\lesssim_{c_2,c_3,\alpha_0}d (2-\alpha).
\end{align*}
\textbf{Estimation of $J_2$:}
Applying Taylor's expansion with integral remainder, we have
\begin{align*}
    J_2 &= c_{d,\alpha}\int_{0<|z|\leq1}\int_0^1\frac{\langle\nabla^2f(x+r\sigma(x) z),(A z)(A z)^*\rangle_{\mathrm{HS}}(1-r)}{|z|^{d+\alpha}}drdz
 - \frac{1}{2}\langle\nabla^2f(x),AA^*\rangle_{\mathrm{HS}} =J_{21} + J_{22},
\end{align*}
where
\begin{align*}
&\quad\ J_{21}
:=\frac{c_{d,\alpha}}{2}\int_{0<|z|\leq 1}
\frac{\langle\nabla^2f(x),
(A z)(A z)^{*}\rangle_{\rm HS}}{|z|^{d+\alpha}}\dif z
-\frac12\,\langle
\nabla^2f(x),AA^{*}
\rangle_{\rm HS},\\
& J_{22}:= c_{d,\alpha}\int_{0<|z|\leq 1}\int_0^1
\frac{\langle\nabla^2f(x+rA z) -\nabla^2f(x),
(A z)(A z)^{*}\rangle_{\rm HS}(1-r)}{|z|^{d+\alpha}}\dif r\dif z.
\end{align*}
For $J_{21}$, by the symmetric property, we note that for any $1\leq i,j\leq d$, 
$$
\int_{0<|z|\leq 1} \frac{z_i z_j}{|z|^{d+\alpha}}\dif z =\frac{\delta_{ij}}{d} \int_{0<|z|\leq 1} \frac{|z|^2}{|z|^{d+\alpha}}\dif z
=\frac{\delta_{ij}S_d}{d(2-\alpha)},
	$$
where $z=(z_1,\cdots ,z_d)$, $\delta_{ij}$ is the Kronecker delta satisfying that the function is $1$ if $i=j$, and $0$ otherwise. Then we have
\begin{align*}
J_{21}
=&\frac{c_{d,\alpha}}{2}\int_{0<|z|\leq1}
\frac{\langle\nabla^2f(x),
A (zz^{*})A \rangle_{\rm HS}}{|z|^{d+\alpha}}\dif z
-\frac12 \langle
\nabla^2f(x),AA^{*}
\rangle_{\rm HS}\\
=&\frac{c_{d,\alpha}S_d}{2d(2-\alpha)}\langle
\nabla^2f(x),AA^{*}
\rangle_{\rm HS}-\frac12\langle
\nabla^2f(x),AA^{*}
\rangle_{\rm HS}\\
=&\left(\frac{c_{d,\alpha}S_d}{2d(2-\alpha)}
-\frac{1}{2}\right)\langle
\nabla^2f(x),AA^{*}
\rangle_{\rm HS}.
\end{align*}
On the other hand, by \cite[Lemma 3.3]{Deng2023Optimal}, we have
\begin{align}\label{A3}
        \left|
        \frac{c_{d,\alpha} S_d}{d(2-\alpha)}
        -1\right|
        \lesssim (2-\alpha)\log(1+d),\quad \forall \alpha\in(0,2).
\end{align}
By Assumption \ref{hyp:A}, it is clear that
\begin{align*}
|J_{21}| \lesssim_{\bm{\theta}}(2-\alpha)\log(1+d)\|\nabla^2 f\|_{\infty}
\overset{\eqref{A1}}\lesssim_{c_2,c_3}(2-\alpha)\log(1+d) t^{-\frac{1}{2}}.
\end{align*}
For $J_{22}$, as the procedure of $J_{21}$, we note that for any $r\in[0,1]$, \begin{align*}
\|\nabla^2f(x+rA z)-\nabla^2f(x)\|_{\rm op}
\lesssim_{c_2} (\|\nabla^3f\|_{\infty} |z|)\wedge \|\nabla^2f\|_{\infty}
\overset{\eqref{A1}}\lesssim_{c_2,c_3} ( t^{-1}|z|)\wedge t^{-\frac{1}{2}}.
\end{align*}
Then we get
\begin{align*}
|J_{22}|&\lesssim_{c_2,c_3}  c_{d,\alpha}\int_{0<|z|\leq 1}
\frac{( t^{-1}|z|)\wedge t^{-\frac{1}{2}}}{|z|^{d+\alpha-2}}\dif z
\lesssim_{c_2,c_3}  c_{d,\alpha}S_d t^{-1}\int_0^1
(r\wedge t^{\frac{1}{2}})r^{1-\alpha}\dif r\\
&\lesssim_{c_2,c_3} \frac{c_{d,\alpha}S_d}{(2-\alpha)}
\left(t^{-\frac{1}{2}}-(3-\alpha)^{-1}
t^{\frac{1-\alpha}{2}}\right)
\overset{\eqref{op:levy}}\lesssim_{c_2,c_3} d\left(t^{-\frac{1}{2}}-(3-\alpha)^{-1}
 t^{\frac{1-\alpha}{2}}\right).
\end{align*}
We get that
\begin{align*}
|J_{2}| \lesssim_{c_2,c_3}d\left((3-\alpha)t^{-\frac{1}{2}}-(3-\alpha)^{-1}
 t^{\frac{1-\alpha}{2}}\right).
\end{align*}
Combining the above estimates, we get \eqref{genediff:1}.

\textbf{Part (ii):}  We fix $t\in(0,1]$, $\alpha,\vartheta\in[\alpha_0,\vartheta_0]$, without loss of generality, assume that $\alpha<\vartheta$ and recall the notations of \textbf{Part (i)}. Denoting by $h:=P^\alpha_tg$, by Lemma \ref{lemma:g}, there exists $C= C(c_2,c_3,\alpha_0,\vartheta_0) > 0$ such that:
\begin{align}\label{A2}
\|\nabla^n h\|_\infty\overset{\eqref{est:grad123}}\lesssim_{C} t^{-\frac{n-1}{\alpha}},\quad \forall n = 1,2.
\end{align}
By \eqref{op:levy}, it follows that
$$
(\cA^\alpha-\cA^\vartheta)h(x)
=\mJ_1+\mJ_2,
$$
where
\begin{align*}
&\mJ_1:=c_{d,\alpha}\int_{\mR^d \setminus \{0\} }
\left(\delta_{A z}h(x)-\nabla_{A z} h(x)\right)\left(\frac{1}{|z|^{d+\alpha}}
-\frac{1}{|z|^{d+\vartheta}}\right)\dif z,\\
&\qquad\  \mJ_2:=(c_{d,\alpha}-c_{d,\vartheta})\int_{\mR^d \setminus \{0\} }
\frac{\delta_{A z}h(x)-\nabla_{A z} h(x)}{|z|^{d+\vartheta}}\dif z.
\end{align*}
\textbf{Estimation of $\mJ_1$:} By Assumption \ref{hyp:A}, we have
\begin{align*}
|\mJ_1| &\lesssim_{c_3} c_{d,\alpha}\left(\|\nabla^2h\|_{\infty}
\int_{0<|z| \leq 1} \frac{|z|^2(1-|z|^{\vartheta-\alpha})}
{|z|^{d+\vartheta}}\dif z
+\|\nabla h\|_{\infty}\int_{|z|>1 } \frac{|z|(|z|^{\vartheta-\alpha}-1)}
{|z|^{d+\vartheta}}\dif z\right)\\
&\lesssim_{c_3} c_{d,\alpha}S_d\left(\|\nabla^2h\|_{\infty}
\int_{0}^1 (1-r^{\vartheta-\alpha})
r^{1-\vartheta}\dif r
+\|\nabla h\|_{\infty}\int_{1}^\infty (r^{\vartheta-\alpha}-1)r^{-\vartheta}
\dif r\right)\\
&\lesssim_{c_3} (\vartheta-\alpha)\left(\|\nabla^2h\|_{\infty}
\frac{c_{d,\alpha}S_d}{(2-\vartheta)(2-\alpha)}
+\|\nabla h\|_{\infty} \frac{c_{d,\alpha}S_d}{(\vartheta-1)
(\alpha-1)}\right).
\end{align*}
Since $\alpha,\vartheta\in[\alpha_0,\vartheta_0]$, by \eqref{CS12}, then we have there exists $C= C(c_2,c_3,\alpha_0,\vartheta_0) > 0$ such that
\begin{align*}
|\mJ_1| &\lesssim_{c_2} (\vartheta-\alpha)\left(\|\nabla^2h\|_{\infty}
\frac{c_{d,\alpha}S_d}{(2-\vartheta)(2-\alpha)}
+\|\nabla h\|_{\infty} \frac{c_{d,\alpha}S_d}{(\vartheta-1)
(\alpha-1)}\right)
\overset{\eqref{A2}}\lesssim_{C} d(\vartheta-\alpha)t^{-\frac{1}{\alpha}}.
\end{align*}
\textbf{Estimation of $\mJ_2$:}  We similarly have
\begin{align*}
|\mJ_2|&\lesssim_{c_2} |c_{d,\alpha}-c_{d,\vartheta}|
\left(\|\nabla^2h\|_{\infty}
\int_{0<|z|\leq 1} \frac{|z|^2}{|z|^{d+\vartheta}}\dif z
+\|\nabla h\|_{\infty}\int_{|z|>1}
\frac{|z|}{|z|^{d+\vartheta}}\dif z\right)\\
&\lesssim_{c_2} |c_{d,\alpha}-c_{d,\vartheta}|S_d
\left(\|\nabla^2h\|_{\infty}\int_{0}^1 r^{1-\vartheta} \dif r
+\|\nabla h\|_{\infty}\int_{1}^\infty r^{-\vartheta}\dif r\right)\\
&\lesssim_{c_2} |c_{d,\alpha}-c_{d,\vartheta}|S_d\left((2-\vartheta)^{-1}
\|\nabla^2h\|_{\infty}+(\vartheta-1)^{-1}\|\nabla h\|_{\infty}\right).
\end{align*}
By Euler's reflection formula of $\Gamma(x)$, dnoting by
$G(x):=x4^{x}\Gamma(d/2+x)\Gamma(x)\sin(\pi x),$
we rewrite
   \begin{align*}
c_{d,\alpha} S_d
=\frac{\alpha2^{\alpha-1}\Gamma
(\frac{d+\alpha}{2})}
{\Gamma(1-\frac\alpha2)
\Gamma(\frac{d}{2})}
=\frac{\alpha2^{\alpha}\Gamma
(\frac{d+\alpha}{2})
\Gamma(\frac\alpha2
)\sin(\frac{\pi\alpha}{2}
)}{2\pi\Gamma
(\frac{d}{2})}=\frac{G(\frac{\alpha}{2})}{\pi\Gamma
(\frac{d}{2})}.
    \end{align*}
Since for any $x\in(\frac{1}{2},1)$, it holds that
$$
|G'(x)| = G(x) \big| x^{-1} + \ln 4 + \psi(d/2+x) + \psi(x) + \pi \cot(\pi x) \big|\lesssim \Gamma(d/2+1),
$$
then we have
$$
|G(x)-G(y)|\lesssim \Gamma(d/2+1) |x-y|,\quad \forall x,y\in (1/2,1),
$$
where $\psi(z):= \frac{\Gamma'(z)}{\Gamma(z)}$ is the digamma function.
Then we have there exists $C= C(c_2,c_3,\alpha_0,\vartheta_0) > 0$ such that
\begin{align*}
|\mJ_2|&\lesssim_{c_2}\Gamma(d/2)^{-1} |G(\alpha/2)-G(\vartheta/2)|\left((2-\vartheta)^{-1}
\|\nabla^2h\|_{\infty}+\|\nabla h\|_{\infty}\right)\\
&\overset{\eqref{A2}}\lesssim_{\!\!\!\!C}d(\vartheta-\alpha)\left((2-\vartheta)^{-1}t^{-\frac{1}{\alpha}}+1\right)\lesssim_{C}d(\vartheta-\alpha)t^{-\frac{1}{\alpha}}.
\end{align*}
Combing the above calculations, we get \eqref{genediff:2} and complete the proof.
\end{proof}

\subsection{Proof of Theorem \ref{th:main}}\label{sub:PMT}
Now, we present the proof of Theorem \ref{th:main}.

\begin{proof}[Proof of Theorem \ref{th:main}]
\textbf{Part (i):} We commence by analyzing the Wasserstein-1 distance between the measures $\sL_{t}^{\alpha,x}$ and $\sL_{t}^{x}$. Through the dual formulation in \eqref{de:W1} and application of Duhamel's principle (see \cite{Pages2023Unadjusted}), we establish
\begin{align}\label{dis11}
\cW_{1}\left(\sL_{t}^{\alpha,x}, \sL_{t}^{x}\right) \overset{\eqref{A4}}= \sup_{g\in\mathrm{Lip}(1)} \left[P^\alpha_tg(x)-Q_tg(x)\right] \overset{\eqref{A5}}= \sup_{g\in\mathrm{Lip}(1)}\int_0^t P^\alpha_{t-s} (\mathcal{A}^{\!\alpha}-\mathcal{A}^{\scriptscriptstyle Q})Q_s g(x)\dif s.
\end{align}
\textbf{Case 1 ($t \leq 1$):} By Proposition \ref{genediff} (i), we yield the existence of a positive constant $C = C(c_2,c_3,\alpha_0)$ satisfying:
\begin{align}
\left|\int_0^{t}P^\alpha_{t-s}(\mathcal{A}^{\!\alpha}-\mathcal{A}^{\scriptscriptstyle Q})Q_sg(x)\dif s\right| &\lesssim_{C}d\int_0^1\left((3-\alpha)s^{-\frac{1}{2}}-(3-\alpha)^{-1}s^{\frac{1-\alpha}{2}} \right)\dif s \nonumber \\
&\lesssim_{C} d((3-\alpha)^3-1)\lesssim_{C} d(2-\alpha). \label{PQ}
\end{align}
\textbf{Case 2 ($t > 1$):} We partition the temporal domain into near-field and far-field components:
\begin{align*}
\int_0^t P^\alpha_{t-s} (\mathcal{A}^{\!\alpha}-\mathcal{A}^{\scriptscriptstyle Q})Q_s g(x)\dif s=\left(\int_0^1+\int_1^t\right) P^\alpha_{t-s} (\mathcal{A}^{\!\alpha}-\mathcal{A}^{\scriptscriptstyle Q})Q_s g(x)\dif s=:I_1+I_2.
\end{align*}
For $I_1$, by \eqref{PQ}, there exists $C= C(c_2,c_3,\alpha_0) > 0$ such that
\begin{align}\label{A6}
|I_1|\lesssim_{C}d\int_0^1\left((3-\alpha)s^{-\frac{1}{2}}-(3-\alpha)^{-1}s^{\frac{1-\alpha}{2}} \right)\dif s\lesssim_{C} d(2-\alpha).
\end{align}
For $I_2$, employing the semigroup property of $(Q_t)_{t\geq 0}$, we restructure:
\begin{align*}
I_2=\int_1^t P^\alpha_{t-s} (\mathcal{A}^{\!\alpha}-\mathcal{A}^{\scriptscriptstyle Q})Q_1Q_{s-1} g(x)\dif s.
\end{align*}
By Lemma \ref{ergodicx} (ii), we have the exponential contraction with $\bm{\theta}$-dependent constants $C_1,\lambda_1>0$, that is,
\begin{align*}
|Q_{s-1} g(x)-Q_{s-1} g(y)|\leq \cW_{1}\left(\sL_{s-1}^{x},\sL_{s-1}^{y}\right) \leq C_1\e^{-\lambda_1 (s-1)}|x-y|, \quad \forall s>1,\ x,y\in\mathbb{R}^{d},
\end{align*}
which implies $F_s:=C^{-1}_1\e^{\lambda_1 (s-1)}Q_{s-1} g\in\mathrm{Lip}(1)$. By Proposition \ref{genediff} (i), there is $C= C(\bm{\theta},\alpha_0) > 0$ such that
\begin{align}\label{A7}
&\quad |I_2|\leq \int_1^t C_1\e^{-\lambda_1 (s-1)} |P^\alpha_{t-s} (\mathcal{A}^{\!\alpha}-\mathcal{A}^{\scriptscriptstyle Q})Q_1F_s(x)|\dif s\no\\
&\lesssim_{C} d\left((3-\alpha)-(3-\alpha)^{-1}\right)\int_1^t C_1\e^{-\lambda_1 (s-1)} \dif s \lesssim_{C}d(2-\alpha).
\end{align}
\textbf{Uniform Estimation:} By \eqref{A6} and \eqref{A7}, we obtain $C= C(\bm{\theta},\alpha_0) > 0$ such that for all $t>0$ and $x\in\mR^d$
\begin{align}\label{A8}
\cW_{1}\left(\sL_{t}^{\alpha,x}, \sL_{t}^{x}\right) \leq C d(2-\alpha).
\end{align}
Applying the triangle inequality for Wasserstein metrics, we have
\begin{align*}
\cW_{1}(\mu_\alpha,\mu_2)&\leq
\cW_{1}(\mu_\alpha,\sL_{t}^{\alpha,x})+ \cW_{1}(\sL_{t}^{\alpha,x},\sL_{t}^{x})+ \cW_{1}(\sL_{t}^{x},\mu_2) \\
& \overset{\eqref{A8}}\leq C d(2-\alpha)+\cW_{1}(\mu_\alpha,\sL_{t}^{\alpha,x})+ \cW_{1}(\sL_{t}^{x},\mu_2).
\end{align*}
Taking $t\to\infty$ via Lemma \ref{ergodicx} (ii), we have
\begin{align*}
\cW_{1}(\mu_\alpha,\mu_2)\lesssim_C d(2-\alpha).
\end{align*}

\textbf{Part (ii):} Let $\alpha,\vartheta\in[\alpha_0,\vartheta_0]\subset (1,2)$. The comparative analysis of $\sL_{t}^{\alpha,x}$ and $\sL_{t}^{\vartheta,x}$ follows analogous methodology. Through parallel developments to \textbf{Part (i)}, we derive
\begin{align*}
\cW_{1}(\sL_{t}^{\alpha,x},\sL_{t}^{\vartheta,x}) &= \sup_{g\in\mathrm{Lip}(1)}\int_0^t P^\vartheta_{t-s} (\mathcal{A}^{\!\alpha}-\mathcal{A}^\vartheta)P^\alpha_s g(x)\dif s.
\end{align*}
By Proposition \ref{genediff} (ii) and ergodic theory via Lemma \ref{ergodicx} (ii), we obtain a constant $C_2= C(\bm{\theta},d,\alpha_0,\vartheta_0) > 0$ such that for all $t>0$ and $x\in\mR^d$,
$$ \cW_{1}(\sL_{t}^{\alpha,x},\sL_{t}^{\vartheta,x}) \lesssim_{C_2}|\vartheta-\alpha|,$$
and further get
$$ \cW_{1}(\mu_\alpha,\mu_\vartheta) \lesssim_{C_2}|\vartheta-\alpha|. $$
Technical details are omitted for concision.
\end{proof}

\section{Gradient Estimates: The Proof of Lemma \ref{lemma:g}} \label{sec:ge}
This section is organized into two subsections: In Subsection \ref{s:B1}, we establish the proof of part (i) in Lemma \ref{lemma:g}. Subsequently, in Subsection \ref{s:B2}, we present the proof of part (ii) in Lemma \ref{lemma:g}. For clarity and completeness of exposition, we begin by recalling essential concepts and properties from Malliavin calculus (see \cite{Nualart2006the, Nualart2018introduction}). Consider the classical Wiener space $(\mathbb{W},\mathbb{H},\mu^{\mathbb{W}})$, where:
\begin{itemize}
    \item $\mathbb{W}$ denotes the space of continuous functions $\omega:[0,\infty)\to\mathbb{R}^d$ with $\omega(0)=0$;
    \item $\mathbb{H}$ represents the Cameron-Martin space consisting of absolutely continuous functions $h\in\mathbb{W}$ with square-integrable generalized derivatives $\dot{h}$, that is,
    $
        \int_0^\infty |\dot{h}(t)|^2 dt < \infty;
    $
    \item $\mu^{\mathbb{W}}$ is the Wiener measure under which the coordinate process $W_t(\omega) := \omega(t)$ is a $d$-dimensional standard Brownian motion.
\end{itemize}
Let $\mathcal{S}$ denote the class of smooth cylindrical random variables of the form
\begin{equation*}
    F = \phi(W(h_1),\ldots,W(h_d)),
\end{equation*}
where $\phi\in\mathcal{C}_0^\infty(\mathbb{R}^d)$ with polynomially bounded partial derivatives, and $h_i\in\mathbb{H}$ with Wiener integrals
\begin{equation*}
    W(h_i) = \int_0^\infty \dot{h}_i(t) dW_t, \quad i=1,\ldots,d.
\end{equation*}
The Malliavin derivative operator $D:\mathcal{S}\to L^2(\mathbb{W};\mathbb{H})$ is defined as the $\mathbb{H}$-valued random variable
\begin{equation*}
    DF = \sum_{i=1}^d \partial_i\phi(W(h_1),\ldots,W(h_d)) \otimes h_i,
\end{equation*}
where $\otimes$ denotes the tensor product in $\mathbb{H}$. Let $\delta$ be the divergence operator adjoint to $D$. By  \cite[pp. 54-55]{Nualart2018introduction}, for any $F, G\in\mathcal{S}$ and $U\in\mathbb{H}$, denoting by
$D_UF:=\langle DF, U \rangle_{\mathbb{H}}$, we have
\begin{align}\label{mal:div}
D_U(FG)=FD_UG+GD_UF,\quad     \mathbb{E}[D_UF] = \mathbb{E}[F\delta(U)].
\end{align}
Here and below, let $\cD(D)$ and $\cD(\delta)$ be the domain of $D$ and $\delta$ respectively.

\subsection{Proof of Lemma \ref{lemma:g} (i)}\label{s:B1}
The $d$-dimensional rotationally invariant $\alpha$-stable process $(L^{\scriptscriptstyle\!(\alpha)}_t)_{t\geq 0}$ admits a canonical representation through Brownian subordination \cite[Theorem 30.1]{Sato1999Levy}. Specifically, there exists independent $d$-dimensional Brownian motion $(W_t)_{t\geq 0}$ and
{\fontsize{9}{8}\selectfont  
$\frac{\alpha}{2}$
}-stable subordinator $(S_t)_{t\geq 0}$ satisfying $L^{\scriptscriptstyle\!(\alpha)}_t = W_{S_t}$, where the Laplace transform of $S_t$ is given by
\begin{align}
    \mathbb{E}(\e^{-uS_{t}}) = \e^{-2^{\frac{\alpha-2}{2}}u^{\frac{\alpha}{2}}t}, \quad u > 0. \label{sub:lap}
\end{align}
This subordination structure permits reformulating the original SDE \eqref{sde:L} as
\begin{align}\label{sde:L1}
    \dif X^x_t = b(X^x_t)\dif t + \sigma(X^x_{t-})\dif W_{S_t}, \quad X_0 = x.
\end{align}
Under Assumption \ref{hyp:A}, by fixing any path $\ell_\cdot$ of $S_\cdot$, we analyze the auxiliary SDE
\begin{align}
    \dif X^{\ell,x}_t = b(X^{\ell,x}_t)\dif t + \sigma(X^{\ell,x}_{t-})\dif W_{\ell_t}, \quad X^{\ell,x}_0 = x. \label{sde:fix_xp}
\end{align}
{\it All discussions in this subsection revolve around SDEs \eqref{sde:L1} and  \eqref{sde:fix_xp}}. Additionally, our exposition uses the following two key semigroups: the stable semigroup $P^{\alpha}_t$ and its path-fixed counterpart $P^\ell_t$ are defined for any test function $f \in \cC^\infty_0(\mR^d)$
\begin{align*}
 P^\alpha_t f(x) := \mE \left[f(X^x_t)\right], \quad P^\ell_t f(x) := \mathbb{E}\big[f(X^{\ell,x}_t)\big].
\end{align*}

\subsubsection{The second-order directional derivative formula for $ P^\ell_t $}
For any fixed path $\ell_\cdot$ of $S_\cdot$, we define
\begin{align}\label{de:tl}
\tau_\ell:=\inf\{t\geq0 \mid \ell_t>1\},\quad
\bm{\tau}^{\ell}_t:=t\wedge\tau_\ell.
\end{align}
For any c\`adl\`ag function $\xi_t:[0,\infty)\to\mR^d$, we denote by
$$
\Delta\xi_t:=\xi_t-\xi_{t-}.
$$
Using \cite[p. 310, Theorem 40]{Protter2004stochastic} and Assumption \ref{hyp:A} \hyperref[A2]{$(A2)$}, it is clear that SDE \eqref{sde:fix_xp} forms a $\cC^2$-stochastic flow.
Then for any $v,v_1\in\mR^{d}$ and fixed path $\ell_\cdot$ of $S_\cdot$, we define
\begin{equation}
\begin{split}
    \mathbf{L}^{v}_{\ell,t}
:=&\int^{\bm{\tau}^{\ell}_t}_0
\left\langle\sigma^{-1}
(X^{\ell,x}_{s-})\nabla_vX^{\ell,x}_{s-},\dif W_{\ell_s}\right\rangle
-\int^{\bm{\tau}^{\ell}_t}_0
\left\langle\sigma^{-1}(X^{\ell,x}_{s-}),
\nabla_{\nabla_vX^{\ell,x}_{s-}}
\sigma(X^{\ell,x}_{s-})
\right\rangle_{\small{\mathrm{HS}}}\dif\ell_s\\
&+\sum_{s\in(0,\bm{\tau}^{\ell}_t]}
\left\langle\sigma^{-1}(X^{\ell,x}_{s-})
\nabla_{\nabla_vX^{\ell,x}_{s-}}
\sigma(X^{\ell,x}_{s-})\Delta W_{\ell_{s}},\Delta W_{\ell_{s}}\right\rangle
\end{split}\label{mal:dua:rew}
\end{equation}
and consider the following SDE
\begin{align}
&\mathbf{Z}^{v,v_1}_{\ell,t}=\int^t_0
\nabla_{\mathbf{Z}^{v,v_1}_{\ell,s}}
b(X^{\ell,x}_{s})\dif s
+\int^t_0\nabla_{\mathbf{Z}^{v,v_1}_{\ell,s-}}
\sigma(X^{\ell,x}_{s-})\dif W_{\ell_s}+\mathcal{Z}^{v,v_1}_{\ell,t},
\label{fix:infi:lie}
\end{align}
where
\begin{align}\label{Ztvv1}
\mathcal{Z}^{v,v_1}_{\ell,t}
:=&\sum_{s\in(0,\bm{\tau}
^{\ell}_t]}\Big[\nabla_{v_1} \nabla_{v}X^{\ell,x}_{s-}-\nabla_{ \nabla_{v_1} X^{\ell,x}_{s-}}
\sigma(X^{\ell,x}_{s-})\sigma^{-1}(X^{\ell,x}_{s-})
\left(\nabla_vX^{\ell,x}_{s-}+
 \nabla_{\nabla_vX^{\ell,x}_{s-}}
 \sigma(X^{\ell,x}_{s-})\Delta W_{\ell_{s}}\right)\Big]\Delta\ell_{s}.
\end{align}
It is standard that Assumption \ref{hyp:A}  \hyperref[A2]{$(A2)$} implies SDE \eqref{fix:infi:lie} has a unique strong solution (see  \cite[p. 249, Theorem 6]{Protter2004stochastic}).

The second-order directional derivative formula for $ P^\ell_t $ is established as follows.
\begin{proposition}\label{formermain}
Under Assumption \ref{hyp:A}, it holds that for any $ f \in \cC^\infty_0(\R^d)$ and $x,v,v_1\in\mR^d$,
\begin{align}
\nabla_{v_1}\nabla_{v}P^\ell_tf(x)
 = \ell_{\bm{\tau}^{\ell}_t}^{-1}\mathbb{E}\left[ \nabla f(X^{\ell,x}_t) \nabla_{v_1}X^{\ell,x}_t \mathbf{L}^{v}_{\ell,t} \right] + \ell_{\bm{\tau}^{\ell}_t}^{-1}\mathbb{E}\left[ \nabla f(X^{\ell,x}_t)\mathbf{Z}^{v,v_1}_{\ell,t} \right], \label{df:2th}
\end{align}
where $ \mathbf{L}^{v}_{\ell,t}$ and $ \mathbf{Z}^{v,v_1}_{\ell,t}$ are defined in \eqref{mal:dua:rew} and \eqref{fix:infi:lie} respectively.
\end{proposition}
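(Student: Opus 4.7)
The plan is to treat the subordinator path $\ell_\cdot$ as a deterministic parameter and carry out a Bismut--Elworthy--Li integration by parts on the underlying Wiener space $(\mathbb{W},\mathbb{H},\mu^{\mathbb{W}})$ for the conditional semigroup $P^\ell_t$. By Assumption \ref{hyp:A} \hyperref[A2]{$(A2)$} together with the stochastic flow theorem \cite[p.~310, Theorem 40]{Protter2004stochastic}, the map $x\mapsto X^{\ell,x}_t$ is almost surely $\cC^2$, which allows me to differentiate inside the expectation and obtain
\[
\nabla_{v_1}\nabla_v P^\ell_t f(x) = \mE\!\left[\nabla^2 f(X^{\ell,x}_t)\big(\nabla_{v_1}X^{\ell,x}_t,\nabla_v X^{\ell,x}_t\big)\right] + \mE\!\left[\nabla f(X^{\ell,x}_t)\,\nabla_{v_1}\nabla_v X^{\ell,x}_t\right].
\]
The central task is to remove the $\nabla^2 f$ factor, producing a formula involving only $\nabla f$.

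To do so, I would choose the Cameron--Martin direction $h^\ell_v\in\mathbb{H}$ supported on $[0,\ell_{\bm{\tau}^\ell_t}]$ by
\[
\dot h^\ell_v(r) := \sigma^{-1}(X^{\ell,x}_{s(r)-})\,\nabla_v X^{\ell,x}_{s(r)-}\,\mathbf{1}_{[0,\ell_{\bm{\tau}^\ell_t}]}(r),\qquad s(r):=\inf\{s\geq 0\mid \ell_s\geq r\},
\]
so that after the time change $r=\ell_s$ the integrand matches the leading line of $\mathbf{L}^v_{\ell,t}$. The cut-off at $\bm{\tau}^\ell_t$ (see \eqref{de:tl}) is introduced precisely to keep $\ell_{\bm{\tau}^\ell_t}$ controlled so that $h^\ell_v\in\cD(\delta)$ with adequate moment bounds. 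Combining the Malliavin identity $D_r X^{\ell,x}_t = J_t J_{s(r)}^{-1}\sigma(X^{\ell,x}_{s(r)-})\mathbf{1}_{r\leq \ell_t}$ (with $J_s:=\nabla X^{\ell,x}_s$) with this choice, an It\^o-type computation that treats the continuous and the jump parts of $W_{\ell_\cdot}$ separately yields
\[
D_{h^\ell_v}X^{\ell,x}_t = \ell_{\bm{\tau}^\ell_t}\,\nabla_v X^{\ell,x}_t\,+\,(\text{a jump residual}),
\]
while the Skorohod divergence $\delta(h^\ell_v)$, after applying the It\^o formula to the integrand $\sigma^{-1}(X^{\ell,x}_{s-})\nabla_v X^{\ell,x}_{s-}$, unfolds into precisely the three summands that define $\mathbf{L}^v_{\ell,t}$ in \eqref{mal:dua:rew}: the It\^o integral against $dW_{\ell_s}$, the continuous quadratic-variation correction $-\int\langle\sigma^{-1},\nabla_{\nabla_v X}\sigma\rangle_{\mathrm{HS}}d\ell_s$, and the pure-jump correction $\sum\langle\sigma^{-1}\nabla_{\nabla_v X}\sigma\,\Delta W_{\ell_s},\Delta W_{\ell_s}\rangle$. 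The Malliavin duality $\mE[D_{h^\ell_v}F]=\mE[F\delta(h^\ell_v)]$ combined with the Leibniz rule \eqref{mal:div} then gives
\[
\mE\!\left[\nabla^2 f(X^{\ell,x}_t)(\nabla_{v_1}X^{\ell,x}_t,\nabla_v X^{\ell,x}_t)\right] = \ell_{\bm{\tau}^\ell_t}^{-1}\mE\!\left[\nabla f(X^{\ell,x}_t)\nabla_{v_1}X^{\ell,x}_t\,\mathbf{L}^v_{\ell,t}\right] - \ell_{\bm{\tau}^\ell_t}^{-1}\mE\!\left[\nabla f(X^{\ell,x}_t)D_{h^\ell_v}(\nabla_{v_1}X^{\ell,x}_t)\right],
\]
and substituting back produces \eqref{df:2th} once we identify $\mathbf{Z}^{v,v_1}_{\ell,t} = \ell_{\bm{\tau}^\ell_t}\nabla_{v_1}\nabla_v X^{\ell,x}_t - D_{h^\ell_v}(\nabla_{v_1}X^{\ell,x}_t)$. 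A direct check, obtained by differentiating the Malliavin flow equation in the $v_1$-direction, verifies that this quantity solves the linear SDE \eqref{fix:infi:lie} with forcing $\mathcal{Z}^{v,v_1}_{\ell,t}$ of \eqref{Ztvv1}: the continuous parts of the two second-variation equations cancel and only the jump contributions survive.

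The main obstacle, and the reason the formula carries the intricate correction structure in $\mathbf{L}^v_{\ell,t}$ and $\mathbf{Z}^{v,v_1}_{\ell,t}$, is the rigorous handling of the jumps of $\ell$. Indeed $W_{\ell_\cdot}$ is a genuine jump process with Gaussian jump sizes $\Delta W_{\ell_s}\sim\cN(0,\Delta\ell_s\mI)$, and at each jump time of $\ell$ the Jacobian transport satisfies $J_s J_{s-}^{-1}=\mI+\nabla\sigma(X^{\ell,x}_{s-})\Delta W_{\ell_s}$ rather than the identity. This non-trivial jump factor enters both the commutator $D_{h^\ell_v} X^{\ell,x}_t - \ell_{\bm{\tau}^\ell_t}\nabla_v X^{\ell,x}_t$ and the Malliavin derivative of $\nabla_{v_1}X^{\ell,x}_t$, and it is precisely this residual that, after cancellation with the continuous part of $\nabla_{v_1}\nabla_v X^{\ell,x}_t$, reduces to $\mathcal{Z}^{v,v_1}_{\ell,t}$ in \eqref{Ztvv1}. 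To circumvent the integrability and justification issues inherent in the infinite activity of $\ell$, I would follow the finite-jump approximation scheme of \cite{Wang2015gradient}: first prove \eqref{df:2th} for subordinators with only finitely many jumps on $[0,t]$, where between jumps one has a standard Brownian SDE and all computations reduce to classical Malliavin calculus plus explicit handling at finitely many jump epochs, and then pass to the general $\tfrac{\alpha}{2}$-stable subordinator using the moment estimates for $\nabla^n X^{\ell,x}$, $\mathbf{L}^v_{\ell,t}$ and $\mathbf{Z}^{v,v_1}_{\ell,t}$ developed in Appendix \ref{sec:secJf}.
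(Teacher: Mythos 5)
Your high-level plan --- freeze the subordinator path $\ell_\cdot$, perform a Bismut-type integration by parts on the underlying Wiener space, and pass to the full infinite-activity path via the finite-jump approximation of \cite{Wang2015gradient} --- is indeed the same strategy the paper follows. However, there is a concrete technical gap in the choice of Cameron--Martin direction, and this gap undermines the central integration-by-parts step.

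You take the \emph{adapted}, standard Bismut direction
$\dot h^\ell_v(r)=\sigma^{-1}(X^{\ell,x}_{s(r)-})\nabla_vX^{\ell,x}_{s(r)-}$.
With this choice two things go wrong. First, as you yourself note, $D_{h^\ell_v}X^{\ell,x}_t=\ell_{\bm{\tau}^\ell_t}\nabla_vX^{\ell,x}_t+R_t$ with a nontrivial jump residual $R_t$: across each jump time of $\ell$ the Jacobian satisfies $J_s=(\mI+\nabla\sigma(X^{\ell,x}_{s-})\Delta W_{\ell_s})J_{s-}$, so $J_{s(r)}^{-1}\nabla_vX^{\ell,x}_{s(r)-}\neq v$ and the residual is genuinely present. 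Second, because $h^\ell_v$ is adapted (indeed predictable in Wiener time), the Skorohod divergence collapses to the ordinary It\^o integral
$\delta(h^\ell_v)=\int_0^{\bm{\tau}^\ell_t}\langle\sigma^{-1}(X^{\ell,x}_{s-})\nabla_vX^{\ell,x}_{s-},\dif W_{\ell_s}\rangle$,
a \emph{single} term, not the three-term expression $\mathbf{L}^{v}_{\ell,t}$ in \eqref{mal:dua:rew}. The two correction terms in $\mathbf{L}^{v}_{\ell,t}$ (the $\dif\ell_s$ integral and the double-$\Delta W$ sum) arise precisely because the paper's direction $h$ from \eqref{mal:der} contains the anticipating term $\nabla_{\nabla_vX^{\ell^\eps,x}_{t_k-}}\sigma(X^{\ell^\eps,x}_{t_k-})\Delta W_{\ell^\eps_{t_k}}$; the divergence of this non-adapted $h$ then carries trace corrections, giving exactly \eqref{mal:dua}. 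That same anticipating term is what makes $D_hX^{\ell^\eps,x}_t=\beta_{\ell^\eps_t}\nabla_vX^{\ell^\eps,x}_t$ hold \emph{exactly}, with no residual, which is the identity \eqref{mal:nor} that the paper's derivation of \eqref{key:eq} rests on. With your residual $R_t$ present, the duality step leaves behind a term $\ell_{\bm{\tau}^\ell_t}^{-1}\mE\big[\nabla^2f(X^{\ell,x}_t)\big(\nabla_{v_1}X^{\ell,x}_t,R_t\big)\big]$ which still contains $\nabla^2f$ and cannot be absorbed into $\mathbf{Z}^{v,v_1}_{\ell,t}$, since $\mathbf{Z}$ multiplies $\nabla f$, not $\nabla^2 f$. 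So as written your two claims --- that $D_{h^\ell_v}X$ has a residual, and that $\delta(h^\ell_v)=\mathbf{L}^{v}_{\ell,t}$ with the resulting clean formula --- are mutually inconsistent. Replacing $h^\ell_v$ by the non-adapted $h$ of \eqref{mal:der} fixes both issues and reproduces the paper's argument; the remainder of your plan (the cutoff at $\tau_\ell$, the $\ell^\eps$ approximation, and the convergence via Lemma \ref{fix:infi}) is sound.
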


In order to prove Proposition \ref{formermain}, we will use the method of finite-jump approximation developed in \cite{Wang2015gradient}. More precisely, we construct a sequence of approximation paths $\{\ell^\eps_\cdot, \eps\in[0,1)\}$ of $\ell_{\cdot}$ satisfying that $\ell^\eps_0=0$ and
\begin{align}\label{app:fix:p1}
\ell^\eps_t:=\sum_{s\in(0,t]}\Delta \ell_s1_{ \{\Delta \ell_s\geq \eps \} },\quad t>0.
\end{align}
Let $X^{\ell^{\eps},x}_t$ be the unique strong solution of SDE
\begin{align}
\dif X^{\ell^{\eps},x}_t=b(X^{\ell^{\eps},x}_t)\dif t+\sigma(X^{\ell^{\eps},x}_{t-})\dif W_{\ell^{\eps}_t},\quad X^{\ell^{\eps},x}_0=x.\label{app:fix:sde}
\end{align}
When $\eps=0$, as $\ell^0_t=\ell_t$, we replace $\ell^0_\cdot$ with $\ell_\cdot$ in \eqref{app:fix:sde}.
When $\eps>0$, since $\ell^\eps$ has finite many jumps on any finite interval, we
denote jump points of the process $\{\ell^{\eps}_{t},t\geq0\}$ by $\{t_{k},k\in\mathbb{N}\}$ with $t_{k}<t_{k+1}$. For any $v\in\mathbb{R}^{d},$ define $h$ as $h_{0}=0$ and recursively if $s\in(\ell^{\eps}_{t_{k-1}},\ell^{\eps}_{t_{k}}]$ with $k\in\mathbb{N}$,
\begin{align}
\label{mal:der}
h_s =h_{\ell^{\eps}_{t_{k-1}}}+(\beta_s
-\beta_{\ell^{\eps}_{t_{k-1}}})\sigma^{-1}
(X^{\ell^{\eps},x}_{t_k-})
\big(\nabla_vX^{\ell^{\eps},x}_{t_k-}
+\nabla_{\nabla_vX^{\ell^{\eps},x}_{t_k-}}
\sigma(X^{\ell^{\eps},x}_{t_k-})\Delta W_{\ell^{\eps}_{t_{k}}}\big),
\end{align}
where $\beta:[0,\infty)\to[0,\infty)$ is an absolutely continuous increasing function with $\beta_0=0$ and locally bounded derivative $\dot{\beta}_t$.
Referring to the proof of \cite[Theorem 3.2]{Wang2015gradient}, for any $v\in\mR^d$ and $\eps>0$, setting $h$ defined as \eqref{mal:der}, the Malliavin derivative of $X^{\ell^{\eps},x}_{t}$ with respect to $h$ and the divergence of $h$ have the following representation:
\begin{align}
\label{mal:nor}
&\qquad\qquad\qquad\qquad\qquad\
D_h X^{\ell^{\eps},x}_{t}=\beta_{\ell^{\eps}_t} \nabla_v X^{\ell^{\eps},x}_{t},\\
\delta(h)^{\ell^\eps}_t=&\int^t_0\Big\langle
\sigma^{-1}(X^{\ell^\eps,x}_{s-})
\nabla_vX^{\ell^\eps,x}_{s-},\dif W^\beta_{\ell^\eps_s}\Big\rangle
-\int^t_0\Big\langle\sigma^{-1}
(X^{\ell^\eps,x}_{s-}),\nabla_{\nabla_v X^{\ell^\eps,x}_{s-}}
\sigma(X^{\ell^\eps,x}_{s-})\Big\rangle
_{\small{\mathrm{HS}}}\dif
\beta_{\ell^\eps_s}\nonumber\\
&+\sum_{s\in(0,t]}\Big\langle\sigma^{-1}
(X^{\ell^\eps,x}_{s-})
\nabla_{\nabla_vX^{\ell^\eps,x}_{s-}}
\sigma(X^{\ell^\eps,x}_{s-})\Delta W^\beta_{\ell^\eps_{s}},\Delta W_{\ell^\eps_{s}}\Big\rangle.
\label{mal:dua}
\end{align}

Furthermore, we prepare the following lemma for later use, which will play a crucial role in proving Proposition \ref{formermain} and it's proof is postponed in Appendix \ref{sec:secJf}.

\begin{lemma}\label{fix:infi}
Let $\{\ell^\eps_\cdot, \eps\in[0,1)\}$ be defined in \eqref{app:fix:p1} and $i,j\in\{0, 1\}$. Under Assumption \ref{hyp:A}, the following estimates hold:

\begin{itemize}
\item[(1)] For any $p\geq 2$ and $v,v_1 \in \mathbb{R}^d$, there exists a constant $C=C(p,c_2,c_3)>0$ such that for all $t>0$,
\begin{align}\label{B4}
\mathbb{E}\left[\big|\nabla_{v_1}^i\nabla_{v}^j X^{\ell^{\eps},x}_t\big|^p \right]\mathbf{1}_{i+j>0}
\lesssim_C |v|^p|v_1|^p\exp\Big\{C\big((1+\ell_t)^{2p} + t\big)\Big\}.
\end{align}

\item[(2)] For any $v,v_1\in\mathbb{R}^d$ and $T>0$, it holds that
\begin{align}\label{B5}
\lim_{\eps\downarrow 0}\sup_{t\in[0,T]}\mathbb{E}\left[\big|\nabla_{v_1}^i\nabla_{v}^j (X^{\ell^\eps,x}_t - X^{\ell,x}_t)\big|^2 \right]= 0.
\end{align}

\item[(3)] Let $\beta_t:=t\wedge\ell_{\tau_\ell}$ and $p\geq 2$. For any $v,v_1\in\mathbb{R}^d$ and $T>0$, it holds that
\begin{align}\label{B6}
\lim_{\eps\downarrow0}\sup_{t\in[0,T]}\mathbb{E}\Big[\big|\mathbf{L}^{v}_{\ell,t} - \delta(h)^{\ell^\eps}_t \big|^p + \big|\mathbf{Z}^{v,v_1}_{\ell,t} - [\nabla_{v_1},D_{h}]X^{\ell^\eps,x}_t\big|^2\Big] = 0,
\end{align}
where $[\nabla_{v_1},D_{h}]:=\nabla_{v_1}D_{h}-D_{h}\nabla_{v_1}$, $\delta(h)^{\ell^\eps}_t $ is given as \eqref{mal:dua} for any $\eps>0$, and $ \mathbf{L}^{v}_{\ell,t}$ and $ \mathbf{Z}^{v,v_1}_{\ell,t}$ are defined as \eqref{mal:dua:rew} and \eqref{fix:infi:lie} respectively.
\end{itemize}
\end{lemma}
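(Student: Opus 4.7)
The plan is to prove the three parts sequentially, with parts (2) and (3) leaning on part (1). For part (1), I would formally differentiate the SDE \eqref{app:fix:sde} in the directions $v$ and $v_1$ (the $\mathcal{C}^2$-flow property justified before the lemma makes this rigorous). The first-order process $\nabla_v X^{\ell^\eps,x}_t$ satisfies a linear SDE with coefficients $\nabla b$ and $\nabla\sigma$, while $\nabla_{v_1}\nabla_v X^{\ell^\eps,x}_t$ satisfies an affine SDE whose inhomogeneous term contains the cross-product $\nabla^2\sigma(X^{\ell^\eps,x}_{t-})[\nabla_{v_1}X, \nabla_v X]$. Since the driving noise is $W_{\ell^\eps_t}$, the predictable quadratic variation of the stochastic integrals is $d\ell^\eps_s$, so applying Burkholder--Davis--Gundy, the boundedness from \hyperref[A2]{$(A2)$}, and Gr\"onwall's inequality yields the first-order bound with exponent $C(\ell^\eps_t + t) \leq C(\ell_t + t)$. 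Feeding this into the SDE for the second derivative, Cauchy--Schwarz on the cross-term together with another Gr\"onwall loop produces the claimed $\exp\{C((1+\ell_t)^{2p} + t)\}$ factor.

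For part (2), I would set $R^\eps_t := X^{\ell^\eps,x}_t - X^{\ell,x}_t$ and use the decomposition
\begin{equation*}
\sigma(X^{\ell^\eps,x}_{t-})\dif W_{\ell^\eps_t} - \sigma(X^{\ell,x}_{t-})\dif W_{\ell_t}
= \bigl[\sigma(X^{\ell^\eps,x}_{t-}) - \sigma(X^{\ell,x}_{t-})\bigr]\dif W_{\ell_t} + \sigma(X^{\ell^\eps,x}_{t-})\bigl[\dif W_{\ell^\eps_t} - \dif W_{\ell_t}\bigr],
\end{equation*}
with the analogous split for the drift. The first term yields a Lipschitz contribution absorbed by Gr\"onwall; the second is controlled via the It\^o isometry together with the key convergence $\mathbb{E}[\ell_t - \ell^\eps_t] = \mathbb{E}\sum_{s\leq t}\Delta\ell_s\mathbf{1}_{\Delta\ell_s<\eps} \to 0$ as $\eps\downarrow 0$ (since $\int_0^1 z\,\nu(\dif z) < \infty$ for the $\alpha/2$-stable L\'evy measure). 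This gives \eqref{B5} for $i=j=0$; for $(i,j) = (1,0), (0,1), (1,1)$ the difference SDEs have the same structure, with extra inhomogeneous terms involving differences of $\nabla b, \nabla\sigma$ evaluated at $X^{\ell^\eps,x}$ and $X^{\ell,x}$, all controlled by the uniform moment bounds from part (1).

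For part (3), compare \eqref{mal:dua} and \eqref{mal:dua:rew} term by term. With $\beta_t = t \wedge \ell_{\tau_\ell}$, the measure $\dif\beta_{\ell^\eps_s}$ coincides with $\dif\ell^\eps_s$ on $\{\ell^\eps_s < \ell_{\tau_\ell}\}$ and vanishes thereafter, which exactly reproduces the truncation at $\bm{\tau}^\ell_t$ in \eqref{mal:dua:rew}. Convergence of each of the three terms follows from part (2) combined with the It\^o-isometry argument and dominated convergence on the jump-sum. For the commutator, use \eqref{mal:nor} to write $D_h X^{\ell^\eps,x}_t = \beta_{\ell^\eps_t}\nabla_v X^{\ell^\eps,x}_t$, then differentiate in $v_1$: because $h$ itself depends on $x$ through $\sigma^{-1}(X^{\ell^\eps,x}_{t_k-})$, $\nabla_v X^{\ell^\eps,x}_{t_k-}$ and $\nabla_{\nabla_v X^{\ell^\eps,x}_{t_k-}}\sigma$, the operators $\nabla_{v_1}$ and $D_h$ fail to commute, and the discrepancy is localized at the jump points of $\ell^\eps$. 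A direct computation shows this produces exactly the summand of \eqref{Ztvv1}, evaluated along the jumps of $\ell^\eps$; letting $\eps\downarrow 0$, parts (1)--(2) provide $L^2$-convergence to the sum over the jumps of $\ell$ in $\mathcal{Z}^{v,v_1}_{\ell,t}$.

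The main obstacle is the commutator identification in part (3): bookkeeping the action of $\nabla_{v_1}$ on the piecewise recursive construction \eqref{mal:der} of $h$ while tracking each $x$-dependent factor, and recognizing that the off-diagonal jump contribution collapses precisely to $\mathcal{Z}^{v,v_1}_{\ell,t}$. A secondary technical point is uniform integrability in $\eps$ when passing to $L^2$-limits of sums like $\sum_{s\leq t}|\Delta W_{\ell^\eps_s}|^2 (\cdots)$: one exploits that, conditionally on $\ell$, each $\Delta W_{\ell^\eps_s}$ is Gaussian with variance $\Delta\ell^\eps_s$, and applies part (1) to control the flow-derivative factors whose moments grow only as $\exp\{C((1+\ell_t)^{2p})\}$, finite in expectation on compact time intervals by the explicit Laplace transform \eqref{sub:lap} of the $\alpha/2$-stable subordinator.
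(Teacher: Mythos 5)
Your approach to parts (1) and (2) tracks the paper's proof quite closely. For part (1), the paper uses It\^{o}'s formula for purely jump semimartingales rather than BDG, since the jump term $\Delta(\nabla_v X^{\ell^\eps,x}_s)=\nabla_{\nabla_v X^{\ell^\eps,x}_{s-}}\sigma(X^{\ell^\eps,x}_{s-})\Delta W_{\ell^\eps_s}$ contributes a factor $(1+(\ell^\eps_s)^{p-1})\Delta\ell^\eps_s$ to the $2p$-th moment inequality; your intermediate claim of an exponent $C(\ell_t+t)$ for the first derivative understates this — the correct first-derivative bound is already $\exp\{C(\ell_t+\ell_t^p+t)\}$ — although your final $\exp\{C((1+\ell_t)^{2p}+t)\}$ for the second derivative does match. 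For part (2), your decomposition of the difference SDE and use of $\ell^\eps_t\uparrow\ell_t$ is correct, though the appeal to $\int_0^1 z\,\nu(\dif z)<\infty$ and $\mathbb{E}[\ell_t-\ell^\eps_t]$ is a red herring: the lemma is conditioned on a fixed path $\ell$, so the convergence is pathwise and deterministic. The paper packages the convergence of stochastic integrals over $\ell^\eps$ into a dedicated auxiliary approximation lemma; your argument would need an analogue.

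The genuine gap is in part (3), and specifically in the commutator identification. You claim that \emph{a direct computation shows $[\nabla_{v_1},D_h]X^{\ell^\eps,x}_t$ produces exactly the summand of \eqref{Ztvv1}}, and that one then passes to the limit to get $L^2$-convergence \emph{to the sum $\mathcal{Z}^{v,v_1}_{\ell,t}$}. This conflates $\mathcal{Z}^{v,v_1}_{\ell,t}$ with $\mathbf{Z}^{v,v_1}_{\ell,t}$. The object $\mathcal{Z}^{v,v_1}_{\ell,t}$ defined in \eqref{Ztvv1} is a pure jump sum, but the commutator $[\nabla_{v_1},D_h]X^{\ell^\eps,x}_t$ is \emph{not} a pure jump sum: since $D_h X^{\ell^\eps,x}_t$, $\nabla_{v_1}X^{\ell^\eps,x}_t$, and $\nabla_{v_1}\nabla_v X^{\ell^\eps,x}_t$ each satisfy affine SDEs driven by $W_{\ell^\eps}$, the commutator inherits drift and stochastic-integral contributions in addition to the jump inhomogeneity. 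Lemma \ref{fix:infi} (3) asks you to show the commutator converges to $\mathbf{Z}^{v,v_1}_{\ell,t}$, the \emph{solution} of the SDE \eqref{fix:infi:lie} with $\mathcal{Z}^{v,v_1}_{\ell,t}$ as forcing. The correct argument (i) derives an SDE for $(D_h-\beta_{\ell^\eps_t}\nabla_v)\nabla_{v_1}X^{\ell^\eps,x}_t=[D_h,\nabla_{v_1}]X^{\ell^\eps,x}_t$ — this requires a separate Leibniz-type identity for how the operator $D_h-\beta_{\ell_t}\nabla_v$ acts on integrals and jump sums over $\ell^\eps$, which is not immediate from \eqref{mal:der} alone; (ii) identifies the forcing term $Z^{\ell^\eps}_t$ in that SDE; (iii) shows $Z^{\ell^\eps}_t\to\mathcal{Z}^{v,v_1}_{\ell,t}$ in $L^2$ using parts (1)--(2); and (iv) concludes via a Gr\"{o}nwall-type comparison of the SDE for $[D_h,\nabla_{v_1}]X^{\ell^\eps,x}_t$ with \eqref{fix:infi:lie}. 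Your proposal collapses (i)--(ii) and (iv) into a hand-wave, and misstates the convergence target in (iii); filling this in requires the operator-commutation bookkeeping that the paper treats as a standalone lemma.
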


We now proceed to establish the validity of Proposition \ref{formermain}.

\begin{proof}[Proof of Proposition \ref{formermain}]
Let $v, v_1\in\mathbb{R}^{d}$, $t>0$ be fixed, and consider a fixed path $\ell_\cdot$ of $S_\cdot$.
Define $\beta_t = t \wedge \ell_{\tau_\ell}$ and $h$ is given by \eqref{mal:der} for any $\varepsilon > 0$.
Given the path $\ell_\cdot$ with infinitely many small jumps and $\ell_t > 0$, the limit
\begin{equation*}
\lim_{\varepsilon \downarrow 0} \beta_{\ell^{\varepsilon}_t} = \lim_{\varepsilon \downarrow 0} \ell^{\varepsilon}_t \wedge \ell_{\tau_\ell} = \ell_t \wedge \ell_{\tau_\ell} > 0
\end{equation*}
implies the existence of $\varepsilon_0 \in (0,1)$ satisfying
\begin{align}\label{B3}
\beta_{\ell^{\varepsilon}_t} > 0, \quad \forall \varepsilon \in (0, \varepsilon_0).
\end{align}
The existence of $\cC^2$-stochastic flow $X^{\ell^\varepsilon,x}_t$ for SDE \eqref{app:fix:sde} follows from \cite[p. 310, Theorem 40]{Protter2004stochastic} under Assumption \ref{hyp:A} \hyperref[A2]{$(A2)$}. By \eqref{mal:nor} and \eqref{B3}, we have for any $\varepsilon\in (0, \varepsilon_0)$, the second-order derivative decomposes as
\begin{align*}
\nabla_{v_1}\nabla_{v}\mathbb{E}[f(X^{\ell^{\varepsilon},x}_t)]
&= \mathbb{E}[\nabla f(X^{\ell^{\varepsilon},x}_t) \nabla_{v_1}\nabla_{v}X^{\ell^{\varepsilon},x}_t]
+ \mathbb{E}[\nabla^2 f(X^{\ell^{\varepsilon},x}_t) \nabla_{v_1}X^{\ell^{\varepsilon},x}_t\nabla_{v}X^{\ell^{\varepsilon},x}_t] \\
&= \beta^{-1}_{\ell^{\varepsilon}_t}\mathbb{E}[\nabla f(X^{\ell^{\varepsilon},x}_t) \nabla_{v_1}D_h X^{\ell^{\varepsilon},x}_t]
+ \beta^{-1}_{\ell^\eps_t}\mathbb{E}[\nabla^2 f(X^{\ell^{\eps},x}_t) \nabla_{v_1}X^{\ell^{\eps},x}_tD_h X^{\ell^{\eps},x}_t] \\
&=: I_1 + I_2.
\end{align*}
Applying \eqref{mal:div}, \eqref{mal:nor}, and \eqref{B3} to $I_2$, we have for any $\varepsilon\in (0, \varepsilon_0)$
\begin{align*}
I_2 &= \beta^{-1}_{\ell^{\eps}_t}\mathbb{E}
[D_h(\nabla f(X^{\ell^{\eps},x}_t) \nabla_{v_1}X^{\ell^{\eps},x}_t) ]
-\beta^{-1}_{\ell^{\eps}_t}\mathbb{E}
[\nabla f(X^{\ell^{\eps},x}_t) D_h\nabla_{v_1}X^{\ell^{\eps},x}_t ]\nonumber\\
&\overset{\eqref{mal:dua}}= \beta^{-1}_{\ell^{\eps}_t}\mathbb{E}[\nabla f(X^{\ell^{\eps},x}_t) \nabla_{v_1}X^{\ell^{\eps},x}_t
\delta(h)^{\ell^{\eps}}_t]
-\beta^{-1}_{\ell^{\eps}_t}\mathbb{E}[\nabla f(X^{\ell^{\eps},x}_t) D_h\nabla_{v_1}X^{\ell^{\eps},x}_t ].
\end{align*}
Then we have for any $\varepsilon\in (0, \varepsilon_0)$
\begin{align}
\nabla_{v_1}\nabla_{v}\mathbb{E}
[f(X^{\ell^{\eps},x}_t)]
=\beta^{-1}_{\ell^{\eps}_t}\mathbb{E}\Big[\nabla f(X^{\ell^{\eps},x}_t) \Big(\nabla_{v_1}X^{\ell^{\eps},x}_t\delta(h)
^{\ell^{\eps}}_t+(\nabla_{v_1}D_{h} - D_{h}\nabla_{v_1})X^{\ell^\eps,x}_t\Big) \Big].\label{key:eq}
\end{align}
As $\varepsilon \downarrow 0$, the monotonic convergence $\ell^\varepsilon_t \uparrow \ell_t$ and $\ell^\varepsilon_{t-} \uparrow \ell_{t-}$ hold. By the continuity of $\beta_t$ and Lemma \ref{fix:infi}, we have
\begin{align*}
\lim_{\varepsilon \downarrow 0} \beta^{-1}_{\ell^{\varepsilon}_t} \mathbb{E}[\nabla f(X^{\ell^{\varepsilon},x}_t) \nabla_{v_1}X^{\ell^{\varepsilon},x}_t\delta(h)^{\ell^{\varepsilon}}_t]
&= \beta^{-1}_{\ell_t} \mathbb{E}[\nabla f(X^{\ell,x}_t) \nabla_{v_1}X^{\ell,x}_t\mathbf{L}^{v}_{\ell,t}]
\end{align*}
and
\begin{align*}
\lim_{\varepsilon \downarrow 0} \beta^{-1}_{\ell^{\varepsilon}_t} \mathbb{E}[\nabla f(X^{\ell^{\varepsilon},x}_t) (\nabla_{v_1}D_{h} - D_{h}\nabla_{v_1})X^{\ell^\eps,x}_t]
&= \beta^{-1}_{\ell_t} \mathbb{E}[\nabla f(X^{\ell,x}_t) \mathbf{Z}^{v,v_1}_{\ell,t}].
\end{align*}
Taking $\varepsilon \downarrow 0$ in \eqref{key:eq} while observing the convergence of all terms, we conclude that \eqref{df:2th} holds. Then we complete the proof.
\end{proof}

\subsubsection{Proof of Lemma \ref{lemma:g} (i)}

In this subsection, we present the proof of \eqref{est:grad123} by Proposition \ref{formermain}. We first state the following moment estimates.

\begin{lemma}\label{lastmoment}
Given $v,v_1\in\partial\mathrm{B}$, let Assumption \ref{hyp:A} hold. For any $p\in[2,\infty)$ and $q\in[1,\alpha)$, there exists a constant $C=C(p,q,c_2,c_3)>0$ such that for any $t\in(0,1]$
\begin{align*}
&\Big\{\mathbb{E}\Big[
\ell^{-p}_{\bm{\tau}^{\ell}_t}\mathbb{E}
\big[\big|\mathbf{L}^{v}_{\ell,t}\big|^{p}
\big]\big|_{\ell=S} \Big]\Big\}^{\frac{1}{p}}
+\Big\{\mathbb{E}\Big[
\ell^{-q}_{\bm{\tau}^{\ell}_t}\mathbb{E}
\big[\big|\mathbf{Z}^{v,v_1}_{\ell,t}
\big|^{q}\big]\big|_{\ell=S} \Big]\Big\}^{\frac{1}{q}}
\lesssim_C t^{-\frac{1}{\alpha}}\e^{C(2-\alpha)^{-1}},
\end{align*}
where $\mathbf{L}^{v}_{\ell,t}$ and $\mathbf{Z}^{v,v_1}_{\ell,t}$ are defined in \eqref{mal:dua:rew} and \eqref{fix:infi:lie} respectively.
\end{lemma}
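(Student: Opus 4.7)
The strategy is to fix a path $\ell$ of the $\alpha/2$-stable subordinator $S$, establish $L^p(W)$ bounds for $\mathbf{L}^v_{\ell,t}$ and $L^q(W)$ bounds for $\mathbf{Z}^{v,v_1}_{\ell,t}$ in terms of powers of $\ell_{\bm{\tau}^\ell_t}$, and then take the outer expectation over $S$, exploiting the self-similarity $S_t\stackrel{d}{=}t^{2/\alpha}S_1$, negative-moment formulae for $S_1$, and bounds on the Lévy measure of $S$. The key structural observation is that by the definition \eqref{de:tl} of $\bm{\tau}^\ell_t=t\wedge\tau_\ell$, one has $\ell_{s-}\leq 1$ for every $s<\bm{\tau}^\ell_t$; consequently, for such $s$, Lemma \ref{fix:infi} (1) bounds $\mE_W|\nabla^i_{v_1}\nabla^j_v X^{\ell,x}_{s-}|^p$ by a constant depending only on $p,c_2,c_3$, independently of $\ell$.

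For $\mathbf{L}^v_{\ell,t}$, I decompose \eqref{mal:dua:rew} into (a) the $W$-stochastic integral, (b) the $\dif\ell_s$-drift, and (c) the quadratic jump sum. The Burkholder--Davis--Gundy inequality applied to the time-changed martingale (a), combined with Assumption \ref{hyp:A} \hyperref[A1]{$(A1)$} controlling $|\sigma^{-1}|$, produces a contribution $\lesssim \ell_{\bm{\tau}^\ell_t}^{p/2}$; Hölder's inequality against $\dif\ell_s$ bounds (b) by $\ell_{\bm{\tau}^\ell_t}^{p}$; and for (c), conditional on $\ell$ the increments $\Delta W_{\ell_s}$ are independent centered Gaussians with covariance $\Delta\ell_s\,\mI$, so a pure-jump BDG applied to the centered part together with the compensator identity $\sum_s\Delta\ell_s=\ell_{\bm{\tau}^\ell_t}$ again yields $\ell_{\bm{\tau}^\ell_t}^{p/2}$. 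For $\mathbf{Z}^{v,v_1}_{\ell,t}$, I treat \eqref{fix:infi:lie} as a linear SDE driven by the source \eqref{Ztvv1}, and reduce via Gronwall together with BDG to bounding $\mE_W|\mathcal{Z}^{v,v_1}_{\ell,t}|^q$. Writing $\mathcal{Z}$ as $\sum_s[\cdots]_s\Delta\ell_s$, where $[\cdots]_s$ involves at most one factor of $\Delta W_{\ell_s}$ alongside first- and second-order Jacobians of $X^{\ell,x}$, Hölder's inequality against the discrete measure of total mass $\ell_{\bm{\tau}^\ell_t}$ combined with $\mE_W|\Delta W_{\ell_s}|^q\lesssim(\Delta\ell_s)^{q/2}$ gives $\mE_W|\mathbf{Z}^{v,v_1}_{\ell,t}|^q\lesssim \ell_{\bm{\tau}^\ell_t}^q+\ell_{\bm{\tau}^\ell_t}^{q-1}\sum_s(\Delta\ell_s)^{1+q/2}$.

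In the last step I substitute $\ell=S$, multiply by $\ell_{\bm{\tau}^\ell_t}^{-p}$ (respectively $\ell_{\bm{\tau}^\ell_t}^{-q}$), and compute. On $\{\tau_S>t\}$ one has $S_{\bm{\tau}^S_t}=S_t$, and the explicit formula $\mE[S_1^{-r}]=\frac{2\Gamma(2r/\alpha)}{\alpha\Gamma(r)}\,2^{r(2-\alpha)/\alpha}$ (obtained by integrating $u^{r-1}$ against the Laplace transform \eqref{sub:lap}) combined with the scaling yields $\mE[S_t^{-p/2}]^{1/p}\lesssim t^{-1/\alpha}$; on $\{\tau_S\leq t\}$ one has $S_{\bm{\tau}^S_t}\geq 1$ and all negative moments are trivially $\leq 1$. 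For the $\mathbf{Z}$-remainder, the Lévy-measure compensator bound $\mE_S\sum_{s\leq\tau_S}(\Delta S_s)^{1+q/2}\mathbf{1}_{\Delta S_s\leq 1}\lesssim(1+q/2-\alpha/2)^{-1}$, together with the finite $\frac{q}{2}$-th moment (valid because $q/2<\alpha/2$) of the endpoint jump $\Delta S_{\tau_S}$, produces a finite bound — this is precisely where the constraint $q<\alpha$ is used. The advertised constant $\e^{C(2-\alpha)^{-1}}$ is then absorbed from the accumulated singular denominators $(1-\alpha/2)^{-1}$ and $(\alpha-1)^{-1}$ generated by the small- and large-jump integrations of the Lévy measure, together with the $\Gamma$-function factors appearing in the negative-moment formula.

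\textbf{Main obstacle.} The principal difficulty is the \emph{joint} control of the Gaussian increments $\Delta W_{\ell_s}$ and the subordinator jumps $\Delta\ell_s$ in the quadratic terms of $\mathbf{L}$ and in $\mathcal{Z}$, while tracking every constant's dependence on $\alpha$ so that the final $t^{-1/\alpha}\e^{C(2-\alpha)^{-1}}$ scaling is sharp. The most delicate point is the endpoint jump $\Delta\ell_{\tau_\ell}$, whose distribution has heavy tails and only admits moments of order $<\alpha/2$; the built-in precompensating factor $\ell_{\bm{\tau}^\ell_t}^{-p}$ in the statement of the lemma is exactly what tames the leading $\ell_{\bm{\tau}^\ell_t}^{p/2}$ coming from the BDG estimate and renders the subsequent outer expectation over $S$ integrable.
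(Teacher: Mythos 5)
Your treatment of the first term $\mathbf{L}^v_{\ell,t}$ tracks the paper closely: decompose \eqref{mal:dua:rew} into stochastic integral, $\dif\ell$-drift, and quadratic jump sum; bound each by a power of $\ell_{\bm{\tau}^\ell_t}$ using Burkholder/Minkowski and the uniform moment bound on $\nabla_v X^{\ell,x}$ from Lemma \ref{fix:infi}; then use $\mE[S_t^{-r}]\lesssim t^{-2r/\alpha}$ from the Laplace transform \eqref{sub:lap}. That part is sound and essentially identical to the paper's argument.

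The $\mathbf{Z}^{v,v_1}_{\ell,t}$ part has a genuine gap. You propose to view \eqref{fix:infi:lie} as a linear SDE driven by the source \eqref{Ztvv1}, apply Gronwall and BDG \emph{conditionally on the path $\ell$}, and only then take the outer $S$-expectation. The trouble is that the Gronwall constant produced this way depends on $\ell_t$, not just on $\ell_{\bm{\tau}^\ell_t}$: after $\tau_\ell$ the SDE for $\mathbf{Z}$ continues to accumulate multiplicative $W_\ell$-noise, and for $t>\tau_\ell$ the variance of the driving increments is proportional to $\ell_t-\ell_{\tau_\ell}$, which can be arbitrarily large even for $t\leq 1$. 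Any conditional bound of the form $\mE_W|\mathbf{Z}_t|^q\lesssim \Phi(\ell_t)\cdot(\cdots)$ with $\Phi$ growing (exponentially, as Gronwall gives) in $\ell_t$ is not integrable against the law of $S_t$ after multiplication by $\ell^{-q}_{\bm{\tau}^\ell_t}\leq 1$: on $\{\tau_S<t\}$ the endpoint value $S_{\tau_S}\geq 1$ has heavy tails with only moments of order $<\alpha/2$, so $\mE_S[\mathbf{1}_{\tau_S<t}\Phi(S_t)]$ diverges. The precompensating factor $\ell^{-q}_{\bm{\tau}^\ell_t}$ only helps on $\{\tau_\ell> t\}$, where $\ell_t\leq 1$ anyway; it does not tame the post-$\tau_\ell$ regime.

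The paper avoids this by never factorizing the expectation. It applies Itô's formula to $F^\eps_q(\mathbf{Z})=(\eps^2+|\mathbf{Z}|^2)^{q/2}$ (equation \eqref{gZt}), defines the joint quantity $G_t:=\mE_S[\ell^{-q}_{\bm{\tau}^\ell_t}\mE_W[F^\eps_q(\mathbf{Z}_t)]\,|_{\ell=S}]$, and runs Gronwall on $G_t$ itself. The crucial step is the control of the post-$\tau_\ell$ jump term $H_t$: substituting $\ell=S$ turns the jump sum into an integral of the nonlocal operator $\mathcal{L}^\alpha_{\bm{\sigma}_s}$ acting on $F^\eps_q$ (via the L\'evy--It\^o decomposition of $W_{S_t}$), and the estimate $\mathcal{L}^\alpha_{\bm{\sigma}_s}F^\eps_q\lesssim (2-\alpha)^{-1}F^\eps_q$ from \cite[Lemma A.1]{Chen2024wd} — which exploits the sublinear growth of $F^\eps_q$ with $q<\alpha$ and the exact scaling of the $\alpha$-stable L\'evy measure — gives $H_t\lesssim(2-\alpha)^{-1}\int_0^tG_s\,\dif s$. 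Gronwall then yields the factor $\e^{C(2-\alpha)^{-1}}$. Your explanation that this exponential comes from "accumulated singular denominators $(1-\alpha/2)^{-1}$, $(\alpha-1)^{-1}$" from L\'evy-measure integrations is incorrect: such denominators enter polynomially, not as an exponential; the exponential is a Gronwall artifact whose rate is precisely the $(2-\alpha)^{-1}$ supplied by the nonlocal operator bound. Without this joint argument and the fractional-Laplacian-type estimate, the lemma cannot be established by the route you describe.
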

\begin{proof}[Proof of Lemma \ref{lastmoment}]
(i): We prove that for any $p\geq 2$, there exists a constant $C=C(p,c_2,c_3)>0$ such that
\begin{align}
\label{e1:all}
\mathbb{E}\Big[
\ell^{-p}_{\bm{\tau}^{\ell}_t}\mathbb{E}
\big[|\mathbf{L}^{v}_{\ell,t}|^{p}
\big]\big|_{\ell_\cdot=S_\cdot} \Big]
\leq Ct^{-\frac{p}{\alpha}},\quad \forall t\in (0,1].
\end{align}
By \eqref{mal:dua:rew} and   $\mathbf{C}_r$-inequality\footnote{
For any $a,b\in\mathbb{R}$ and $r>0$, $|a+b|^r\leq \max(1,2^{r-1})(|a|^r+|b|^r)$.
}, we have
\begin{align*}
\mathbb{E}\big[|\mathbf{L}^{v}_{\ell,t}|^{p}\big]
\lesssim_p&\mathbb{E}\Big[\Big|\int^{\bm{\tau}^{\ell}_t}_0
\left\langle\sigma^{-1}
(X^{\ell,x}_{s-})\nabla_vX^{\ell,x}_{s-},\dif W_{\ell_s}\right\rangle\Big|^{p}\Big]
+\mathbb{E}\Big[\Big| \int^{\bm{\tau}^{\ell}_t}_0
\left\langle\sigma^{-1}(X^{\ell,x}_{s-}),
\nabla_{\nabla_vX^{\ell,x}_{s-}}
\sigma(X^{\ell,x}_{s-})
\right\rangle_{\small{\mathrm{HS}}}\dif\ell_s\Big|^{p}\Big]
\nonumber\\
&+\mathbb{E}\Big[\Big|\sum_{s\in(0,\bm{\tau}^{\ell}_t]}
\left\langle\sigma^{-1}(X^{\ell,x}_{s-})
\nabla_{\nabla_vX^{\ell,x}_{s-}}
\sigma(X^{\ell,x}_{s-})\Delta W_{\ell_{s}},\Delta W_{\ell_{s}}\right\rangle\Big|^{p}\Big]
=:I_1+I_2+I_3.
\end{align*}
For $I_1$, by Burkholder's inequality (see \cite[Lemma 2.1]{Wang2015gradient}) and  Minkowski's inequality, we have for any $t\in(0,1]$
\begin{align*}
I_1&\lesssim_{p} \mathbb{E}\Big[\Big(\int^{\bm{\tau}^{\ell}_t}_0\big|
\sigma^{-1}(X^{\ell,x}_{s-})
\nabla_vX^{\ell,x}_{s-}\big|^2\dif \ell_s \Big)^{\frac{p}{2}}\Big]\lesssim_{p} \Big(\int^{\bm{\tau}^{\ell}_t}_0 \left(\mathbb{E} \big[\big| \sigma^{-1}(X^{\ell,x}_{s-})
\nabla_vX^{\ell,x}_{s-}\big|^p\big]\right)^{\frac{2}{p}}
\dif \ell_s \Big)^{\frac{p}{2}}.
\end{align*}
By Assumption \ref{hyp:A} and  \eqref{B4}, noting that $\ell_t\mathbf{1}_{[0,\tau_\ell)}(t)<1$, we have for any $t\in(0,1]$
\begin{align*}
I_1&\lesssim_{p,c_2}\Big(\int^{\bm{\tau}^{\ell}_t}_0 \left(
\mathbb{E}\big[\big|\nabla_vX^{\ell,x}_{s-}\big|^p\big]\right)^{\frac{2}{p}}
\dif \ell_s \Big)^{\frac{p}{2}}
\lesssim_{p,c_2,c_3}\ell^{\frac{p}{2}}_{\bm{\tau}^{\ell}_t}.
\end{align*}
For $I_2$ and $I_3$, by  Minkowski's inequality and Assumption \ref{hyp:A}, we also have
\begin{align*}
I_2\lesssim \Big(\int^{\bm{\tau}^{\ell}_t}_0
\left(\mathbb{E}\big[\big|\nabla_vX^{\ell,x}_{s-}
\big|^{p}\big]\right)^{\frac{1}{p}}\dif \ell_s \Big)^{p}
\lesssim_{p,c_2,c_3}\ell^{p}_{\bm{\tau}^{\ell}_t}
\end{align*}
and
\begin{align*}
I_3 &\lesssim \Big(\sum_{s\in[0,\bm{\tau}^{\ell}_t]}
\big(\mathbb{E}\big[|\nabla_vX^{\ell,x}_{s-}|^p |\Delta W_{\ell_{s}}|^{2p}\big]\big)^{\frac{1}{p}}\Big)^{p}
=\Big(\sum_{s\in[0,\bm{\tau}^{\ell}_t]}
\big(\mathbb{E}\big[|\nabla_vX^{\ell,x}_{s-}|^p\big] \big)^{\frac{1}{p}}\Delta \ell_{s}\Big)^{p}\\
&= \Big(\int^{\bm{\tau}^{\ell}_t}_0
\left(\mathbb{E}\big[\big|\nabla_vX^{\ell,x}_{s-}
\big|^{p}\big]\right)^{\frac{1}{p}}\dif \ell_s \Big)^{p}
\lesssim_{p,c_2,c_3}\ell^{p}_{\bm{\tau}^{\ell}_t}.
\end{align*}
Combining the above estimates, denoting by $\tau:=\inf\{t\geq 0 \mid S_t>1\}$, we have
\begin{align}
&\mathbb{E}\big[
\ell^{-p}_{\bm{\tau}^{\ell}_t}\mathbb{E}
\big[|\mathbf{L}^{v}_{\ell,t}|^{p}
\big]\big|_{\ell_\cdot=S_\cdot} \big]
\lesssim_{p,c_2,c_3} 1+\mathbb{E}\big[S_{t\wedge\tau}^{-p/2}\big]
\lesssim_{p,c_2,c_3} 1+\mathbb{E}\big[S_{t}^{-p/2}\big].\label{2th:k3}
\end{align}
By \eqref{sub:lap}, noting that $s^{-r}=\frac{1}{\Gamma(r)}\int^\infty_0u^{r-1}\e^{-us}\dif u$ for any $r>0$, we get
\begin{align}
\mathbb{E}S_t^{-r}&
\overset{\eqref{sub:lap}}\lesssim \frac{1}{\Gamma(r)}\int^\infty_0u^{r-1}
 \e^{-2^{\frac{\alpha-2}{2}}u^{\frac{\alpha}{2}}t} \dif u
\lesssim t^{-\frac{2r}{\alpha}}\frac{\Gamma(2r)}{\Gamma(r)/4^{r}}
\lesssim_r t^{-\frac{2r}{\alpha}}.
\label{S:-r}
\end{align}
Combining \eqref{2th:k3} and \eqref{S:-r}, we prove \eqref{e1:all}.

(ii): We prove that for any $q\in[1,\alpha)$, there exists a constant $C=C(q,c_2,c_3)>0$ such that
\begin{align}
\label{EA}
\mathbb{E}\Big[
\ell^{-q}_{\bm{\tau}^{\ell}_t}\mathbb{E}
\big[\big|\mathbf{Z}^{v,v_1}_{\ell,t}
\big|^{q}\big]\big|_{\ell_\cdot=S_\cdot}\Big]
\lesssim_C t^{\frac{2(1-q)}{\alpha}}\e^{C (2-\alpha)^{-1}},\quad \forall t\in (0,1].
\end{align}
We fix $t\in(0,1]$.
By applying It\^o's formula (see \cite[Theorem 4.4.10]{Applebaum2009Levy}) to \eqref{fix:infi:lie},  we have for any $f\in \cC^2(\mR^d)$,
\begin{align}
f(\mathbf{Z}^{v,v_1}_{\ell,t})
=&f(0)+ \int^t_0\big \langle \nabla f(\mathbf{Z}^{v,v_1}_{\ell,s-}),
\dif \mathbf{Z}^{v,v_1}_{\ell,s}\big\rangle+
\sum_{s\in(0,t]}
\big[\delta_{\Delta\mathbf{Z}^{v,v_1}_{\ell,s}}f
(\mathbf{Z}^{v,v_1}_{\ell,s-})-\big \langle \nabla f(\mathbf{Z}^{v,v_1}_{\ell,s-}), \Delta\mathbf{Z}^{v,v_1}_{\ell,s}\big \rangle \big]\no\\
=&f(0)+ \int^t_0\big\langle \nabla f(\mathbf{Z}^{v,v_1}_{\ell,s-}),
\nabla_{\mathbf{Z}^{v,v_1}_{\ell,s}}
b(X^{\ell,x}_{s})\big\rangle\dif s
+Q^{\ell,x}_t+K^{\ell,x}_t
+\mathcal{E}^{\ell,x}_t,
\label{gZt}
\end{align}
where $\mathcal{E}^{\ell,x}_t$ is a stochastic process satisfying $\mathbb{E}\mathcal{E}^{\ell,x}_t=0$, $Q^{\ell,x}_t$ and $K^{\ell,x}_{t}$ are defined as below by \eqref{Ztvv1}:
\begin{align}
Q^{\ell,x}_t:=&\sum_{s\in(0,\bm{\tau}^{\ell}_t]}
\big[f(\mathbf{Z}^{v,v_1}_{\ell,s})
-f
\big(\mathbf{Z}^{v,v_1}_{\ell,s-}
+\nabla_{\mathbf{Z}^{v,v_1}_{\ell,s-}}
\sigma(X^{\ell,x}_{s-})\Delta W_{\ell_s}\big)-\big \langle \nabla f(\mathbf{Z}^{v,v_1}_{\ell,s-}),
\Delta \mathcal{Z}^{v,v_1}_{\ell,s}\big \rangle \big]\no\\
&+\int^{\bm{\tau}^{\ell}_t}_0
\big\langle \nabla f(\mathbf{Z}^{v,v_1}_{\ell,s-}),\nabla_{v_1} \nabla_{v}X^{\ell,x}_{s-}-\nabla_{ \nabla_{v_1} X^{\ell,x}_{s-}}\sigma(X^{\ell,x}_{s-})
\sigma^{-1}(X^{\ell,x}_{s-})
\nabla_vX^{\ell,x}_{s-}\big\rangle\dif \ell_s,\label{QgZt}\\
K^{\ell,x}_{t}:=&\sum_{s\in(0,t]}
\big[\delta_{\nabla_{\mathbf{Z}^{v,v_1}_{\ell,s-}}
\sigma(X^{\ell,x}_{s-})\Delta W_{\ell_s}}f
(\mathbf{Z}^{v,v_1}_{\ell,s-})-\big \langle \nabla f(\mathbf{Z}^{v,v_1}_{\ell,s-})
,\nabla_{\mathbf{Z}^{v,v_1}_{\ell,s-}}
\sigma(X^{\ell,x}_{s-})\Delta W_{\ell_s}\big \rangle \big]\label{KgZt}.
\end{align}

{\bf Case 1: We consider $t\leq \tau_\ell$.}  By \eqref{gZt} with $f(x)=|x|^2$,  as $|x+a+b|^2-|x+a|^2-2\langle x,b \rangle=\langle b+2a,b \rangle,$ 
we have for any $t\leq \tau_\ell $,
\begin{align*}
|\mathbf{Z}^{v,v_1}_{\ell,t}|^{2}
=&2\int^t_0\big\langle \mathbf{Z}^{v,v_1}_{\ell,s},
\nabla_{\mathbf{Z}^{v,v_1}_{\ell,s}}
b(X^{\ell,x}_{s})\big\rangle\dif s+
\sum_{s\in(0,t]} \big\langle \Delta \mathcal{Z}^{v,v_1}_{\ell,s}+2\nabla_{\mathbf{Z}^{v,v_1}_{\ell,s-}}
\sigma(X^{\ell,x}_{s-})\Delta W_{\ell_s},\Delta \mathcal{Z}^{v,v_1}_{\ell,s}\big\rangle
\\
&+2\int^{t}_0
\big\langle \mathbf{Z}^{v,v_1}_{\ell,s-},\nabla_{v_1} \nabla_{v}X^{\ell,x}_{s-}-\nabla_{ \nabla_{v_1} X^{\ell,x}_{s-}}\sigma(X^{\ell,x}_{s-})
\sigma^{-1}(X^{\ell,x}_{s-})
\nabla_vX^{\ell,x}_{s-}\big\rangle\dif \ell_s\\
&+\sum_{s\in(0,t]}
\big \langle\nabla_{\mathbf{Z}^{v,v_1}_{\ell,s-}}
\sigma(X^{\ell,x}_{s-})\Delta W_{\ell_s} ,\nabla_{\mathbf{Z}^{v,v_1}_{\ell,s-}}
\sigma(X^{\ell,x}_{s-})\Delta W_{\ell_s}\big \rangle+\mathcal{E}^{\ell,x}_t.
\end{align*}
By \eqref{de:tl}, noting that $\ell_{t-}\mathbf{1}_{(0,\tau_{\ell}]}(t)\leq 1$, we have for any $t\leq\tau_\ell$,
\begin{align}\label{in:A1}
\mE\big[|\Delta \mathcal{Z}^{v,v_1}_{\ell,t}|^{2}\big]
\overset{\eqref{Ztvv1}}\lesssim \mE\big[|\nabla_{v_1} \nabla_{v}X^{\ell,x}_{t-}|^{2}+|\nabla_{v}X^{\ell,x}_{t-}|^{4}\Delta \ell_t+|\nabla_{v_1}X^{\ell,x}_{t-}|^{4}\Delta \ell_t\big]
\overset{\eqref{B4}}\lesssim_{\!\!\!\!\!c_2,c_3} (1+\Delta \ell_t)\Delta \ell_t.
\end{align}
By \eqref{in:A1}, we have
for any $t\leq\tau_\ell$,
\begin{align*}
\mE\big[|\mathbf{Z}^{v,v_1}_{\ell,t}|^{2}\big]
\lesssim_{c_2,c_3}&\int^t_0\mE\big[|\mathbf{Z}^{v,v_1}_{\ell,s}|^{2}\big]\dif s+\int^t_0\mE\big[|\mathbf{Z}^{v,v_1}_{\ell,s-}|^{2}\big]\dif \ell_s+\sum_{s\in(0,t]} \mE\big[|\Delta \mathcal{Z}^{v,v_1}_{\ell,s}|^{2}\big]\\
&+\int^t_0\mE\big[|\nabla_{v_1} \nabla_{v}X^{\ell,x}_{s-}|^{2}+|\nabla_{v}X^{\ell,x}_{s-}|^{2}|\nabla_{v_1}X^{\ell,x}_{s-}|^{2}\big]\dif \ell_s\\
\lesssim_{c_2,c_3}&\int^t_0\mE\big[|\mathbf{Z}^{v,v_1}_{\ell,s-}|^{2}\big]\dif (s+\ell_s)
+\ell_t+\ell^2_t.
\end{align*}
Then by Gronwall's inequality, we have for any $t\leq\tau_\ell$, there is $C=C(c_2,c_3)>0$ such that
\begin{align}\label{e:C1}
\mE\big[|\mathbf{Z}^{v,v_1}_{\ell,t}|^{2}\big]\lesssim_{C}(\ell_t+\ell^2_t)\e^{C\ell_t}.
\end{align}

{\bf Case 2: We consider $t\in(0,1]$.}  In order to estimate \eqref{EA}, we first consider
$G_t$ defined as below:
$$
G_t:=\mathbb{E}\big[
\ell^{-q}_{\bm{\tau}^{\ell}_t}\mE[F^{\eps}_{q}(\mathbf{Z}^{v,v_1}_{\ell,t})]|_{\ell_\cdot=S_\cdot}\big],
$$
where $q\in[1,\alpha)$, $\eps\in(0,1]$ and $
F^{\eps}_{m}(x):=(\eps^2+|x|^2)
^{\frac{m}{2}}$ for any $m\in\mR$.
We note that for any $x\in\mathbb{R}^d$,
%
%
$$
\nabla F^{\eps}_{q}(x)=qF^{\eps}_{q-2}(x)x,\quad
\nabla^2 F^{\eps}_{q}(x)=q F^{\eps}_{q-2}(x)\mI+q (q-2)F^{\eps}_{q-4}(x)x\otimes x.
$$
It is clear that for any $x,y\in\mR^d$,
\begin{align}\label{in:A2}
|\nabla F^{\eps}_{q}(x)|\leq qF^{\eps}_{q-1}(x),\quad
\langle \nabla^2 F^{\eps}_{q}(x)y,y\rangle\leq q F_{q-2}^{\eps}(x) |y|^2.
\end{align}
By \eqref{gZt} with $f=F^{\eps}_{q}$, using Young's  inequality, we have for any $t\in(0,1]$ and $q\in[1,\alpha)$
\begin{align*}
\mE\big[F^{\eps}_{q}&(\mathbf{Z}^{v,v_1}_{\ell,t})\big]
\overset{\eqref{in:A2}}\lesssim_{\bm{\theta}} \eps^q+ \int^t_0\mE\big[F^{\eps}_{q}(\mathbf{Z}^{v,v_1}_{\ell,s})\big]\dif s+\cQ^{\ell,x}_{t}+\cK^{\ell,x}_t,
\end{align*}
where
\begin{align*}
\cQ^{\ell,x}_{t}:=&\sum_{s\in(0,\bm{\tau}^{\ell}_t]}
\mathbb{E}\big[ |\Delta \mathcal{Z}^{v,v_1}_{\ell,s}|+|\mathbf{Z}^{v,v_1}_{\ell,s-}|^{q-1}|\Delta \mathcal{Z}^{v,v_1}_{\ell,s}|+|\mathbf{Z}^{v,v_1}_{\ell,s-}|^{q-1}|\Delta W_{\ell_s}|^{q-1}|\Delta \mathcal{Z}^{v,v_1}_{\ell,s}|
+|\Delta \mathcal{Z}^{v,v_1}_{\ell,s}|^{q}\big]\\
&+\sum_{s\in(0,\bm{\tau}^{\ell}_t]}
\mE\big[|\mathbf{Z}^{v,v_1}_{\ell,s-}|^{2}+\big(1+|\mathbf{Z}^{v,v_1}_{\ell,s-}|^{q-1}\big)\big(|\nabla_{v_1} \nabla_{v}X^{\ell,x}_{s-}|+|\nabla_{v_1} X^{\ell,x}_{s-}|^2+
|\nabla_{v_1} X^{\ell,x}_{s-}|^2\big)\big]\Delta \ell_s, \\
\cK^{\ell,x}_t:=&\sum_{s\in(\tau_\ell,t]}
\mathbb{E}\big[\delta_{\nabla_{\mathbf{Z}^{v,v_1}_{\ell,s-}}
\sigma(X^{\ell,x}_{s-})\Delta W_{\ell_s}}F^{\eps}_{q}
(\mathbf{Z}^{v,v_1}_{\ell,s-})-\big \langle \nabla F^{\eps}_{q}(\mathbf{Z}^{v,v_1}_{\ell,s-})
,\nabla_{\mathbf{Z}^{v,v_1}_{\ell,s-}}
\sigma(X^{\ell,x}_{s-})\Delta W_{\ell_s}\big \rangle \big]\mathbf{1}_{t>\tau_\ell}.
\end{align*}

For $\cQ^{\ell,x}_{t}$, we recall $\ell_{t-}\mathbf{1}_{(0,\tau_{\ell}]}(t)\leq 1$. As the same procedure in {\bf Case 1}, we have for any $t\in(0,1]$ and $q\in[1,\alpha)$,
\begin{align*}
\cQ^{\ell,x}_{t}\lesssim_{c_2,c_3}\ell_{\bm{\tau}^{\ell}_t}+\ell^{\frac{3q}{2}}_{\bm{\tau}^{\ell}_t}.
\end{align*}
Noting that $s^{r}=\frac{r}{\Gamma(1-r)}\int^\infty_0(1-\e^{-us})u^{-1-r}\dif u$ for any $r\in(0,\alpha/2)$, we have
$$
\mE[S^r_t]\overset{\eqref{sub:lap}}\lesssim_{\!\!\!r} \int^\infty_0(1-\e^{-u^{\frac{\alpha}{2}}t})u^{-1-r}\dif u\lesssim t^{\frac{2r}{\alpha}}\left(\frac{\alpha}{\alpha-2r}+\frac{\alpha}{2r}\right) \lesssim_r t^{\frac{2r}{\alpha}}.
$$
By \eqref{2th:k3}, as $q\in [1,\alpha)$, we have
\begin{align*}
G_t\lesssim_{c_2,c_3} \eps^q+\mathbb{E}\big[S_{t\wedge\tau}^{1-q}+S_{t\wedge\tau}^{q/2}\big]
+ \int^t_0G_s
\dif s+H_t
\lesssim_{c_2,c_3,q}\eps^q+t^{\frac{2(1-q)}{\alpha}}+ \int^t_0G_s
\dif s+H_t,
\end{align*}
where
$
H_t:=\mathbb{E}\big[\ell^{-q}_{\tau^{\ell}}\cK^{\ell,x}_t\big|_{\ell_\cdot=S_\cdot}\big].
$

Now, we focus on estimating $H_t$. Let $N(t,\dif z)$ be the Poisson random measure associated with $L^\alpha_t$, i.e.,
$$
N(t,A):=\sum_{s\in(0,t]}\mathbf{1}_{A}(L^\alpha_s-L^\alpha_{s-}),\  A\in\sB(\mR^d).
$$
Let
${\widetilde N}(t,\dif z):=N(t,\dif z)-c_{d,\alpha}t|z|^{-d-\alpha}\dif z$
be the compensated Poisson random measure, where $c_{d,\alpha}$ is the same constant as in \eqref{op:levy}.
By L\'evy-It\^o's decomposition, one can write
\begin{align*}
W_{S_t}=\int_{0<|z|\leq 1}z{\widetilde N}(t,\dif z)+\int_{|z|>1}z N(t,\dif z).
\end{align*}
Denote by $Z_t:=\mathbf{Z}^{v,v_1}_{\ell,t}|_{\ell_\cdot=S_\cdot}$ and $\bm{\tau}:=\tau_\ell|_{\ell_\cdot=S_\cdot}$. By \eqref{sde:L1} and \eqref{sde:fix_xp}, we have
for any $t>\bm{\tau}$,
\begin{align}
Z_t=&Z_{\bm{\tau}}+\int^t_{\bm{\tau}}\nabla_{Z_s}b(X_{s})\dif s
+\int^t_{\bm{\tau}}\nabla_{Z_{s-}}\sigma(X_{s-})\dif W_{S_s}\no\\
=&Z_{\bm{\tau}}+\int^t_{\bm{\tau}}\nabla_{Z_s}b(X_{s})\dif s+\int^t_{\bm{\tau}}\int_{0<|z|\leq 1}\nabla_{Z_{s-}}\sigma(X_{s-})z{\widetilde N}(\dif s,\dif z)\no\\
&+\int^t_{\bm{\tau}}\int_{|z|>1}\nabla_{Z_{s-}}\sigma(X_{s-})z N(\dif s,\dif z).\label{eq:A5}
\end{align}
By It\^o's formula (see \cite[Theorem 4.4.7]{Applebaum2009Levy}) and \eqref{eq:A5}, we have for any $t>\bm{\tau}$,
\begin{align*}
F^{\eps}_{q}(Z_t)=&F^{\eps}_{q}(Z_{\bm{\tau}})
+\int^t_{\bm{\tau}}\langle F^{\eps}_{q}(Z_s),\nabla_{Z_s}b(X_{s})\rangle \dif s+\int^t_{\bm{\tau}}\int_{\mR^d \setminus \{0\} }\big[\delta_{\nabla_{Z_{s-}}\sigma(X_{s-})z}F^{\eps}_{q}
(Z_{s-})\big]{\widetilde N}(\dif s,\dif z)\\
&+\int^t_{\bm{\tau}}\int_{\mR^d \setminus \{0\} }\big[\delta_{\nabla_{Z_{s-}}\sigma(X_{s-})z}F^{\eps}_{q}
(Z_{s-})-\big \langle \nabla F^{\eps}_{q}(Z_{s-})
,\nabla_{Z_{s-}}
\sigma(X_{s-})z\big \rangle \big]\frac{c_{d,\alpha}}{|z|^{d+\alpha}}\dif z\dif s.
\end{align*}
On the other hand, by \eqref{gZt}, we also have for any $t>\bm{\tau}$,
\begin{align*}
F^{\eps}_{q}(Z_t)=&F^{\eps}_{q}(Z_{\bm{\tau}})
+\int^t_{\bm{\tau}}\langle F^{\eps}_{q}(Z_s),\nabla_{Z_s}b(X_{s})\rangle \dif s+\big(\mathcal{E}^{\ell,x}_{t}|_{\ell_\cdot=S_\cdot}-\mathcal{E}^{\ell,x}_{\tau_\ell}|_{\ell_\cdot=S_\cdot}\big)\\
&+\sum_{s\in(\bm{\tau},t]}
\big[\delta_{\nabla_{Z_{s-}}
\sigma(X_{s-})\Delta W_{S_s}}f
(Z_{s-})-\big \langle \nabla f(Z_{s-})
,\nabla_{Z_{s-}}
\sigma(X_{s-})\Delta W_{S_s}\big \rangle \big].
\end{align*}
Then we have  for any $t>\bm{\tau}$,
\begin{align*}
&\sum_{s\in(\bm{\tau},t]}
\big[\delta_{\nabla_{Z_{s-}}
\sigma(X_{s-})\Delta W_{S_s}}f
(Z_{s-})-\big \langle \nabla f(Z_{s-})
,\nabla_{Z_{s-}}
\sigma(X_{s-})\Delta W_{S_s}\big \rangle \big]\\
=&\int^t_{\bm{\tau}}\mathcal{L}^\alpha_{\bm{\sigma}_s} F^{\eps}_{q}
(Z_{s})\dif s+\int^t_{\bm{\tau}}\int_{\mR^d \setminus \{0\} }\big[\delta_{\nabla_{Z_{s-}}\sigma(X_{s-})z}F^{\eps}_{q}(Z_{s-})\big]{\widetilde N}(\dif s,\dif z)-\big(\mathcal{E}^{\ell,x}_{t}|_{\ell_\cdot=S_\cdot}-\mathcal{E}^{\ell,x}_{\tau_\ell}|_{\ell_\cdot=S_\cdot}\big),
\end{align*}
where $\bm{\sigma}_s:=\nabla_{Z_{s}}\sigma(X_{s})$, and $\mathcal{L}^\alpha_{\bm{\sigma}_s}F^{\eps}_{q}(x)$ is given similarly to \eqref{op:levy} as
\begin{align}\label{op:levy1}
\mathcal{L}^\alpha_{\bm{\sigma}_s} F^{\eps}_{q}(x):= c_{d,\alpha}\int_{\mathbb{R}^d \setminus \{0\} } \frac{\delta_{\bm{\sigma}_s z}F^{\eps}_{q}(x) - \nabla_{\bm{\sigma}_s z}F^{\eps}_{q}(x)}{|z|^{d+\alpha}}\dif z.
\end{align}
By Galmarino's test \cite[p. 47]{Raj2023Algorithmic}, we have $\cF_{\bm{\tau}}:=\sigma\big\{\sigma\{ W_{r},r\leq S_{t\wedge\bm{\tau}}\}\otimes\sigma\{ S_{r\wedge\bm{\tau}}, r\leq t\},t\geq 0\big\}$ is the natural filtration before $\bm{\tau}$.
By Doob's optional sampling theorem, we then have
\begin{align*}
&\mathbb{E}\Big[S^{-q}_{\bm{\tau}}\mathbf{1}_{\{t>\bm{\tau}\}}\int^t_{\bm{\tau}}\int_{\mR^d \setminus \{0\} }\big[\delta_{\nabla_{Z_{s-}}\sigma(X_{s-})z}F^{\eps}_{q}(Z_{s-})\big]{\widetilde N}(\dif s,\dif z)\Big]\\
=&\mathbb{E}\Big[S^{-q}_{\bm{\tau}}\mathbf{1}_{\{t>\bm{\tau}\}}\mathbb{E}\Big[\int^t_{\bm{\tau}}\int_{\mR^d \setminus \{0\} }\big[\delta_{\nabla_{Z_{s-}}\sigma(X_{s-})z}F^{\eps}_{q}(Z_{s-})\big]{\widetilde N}(\dif s,\dif z)\mid \cF_{\bm{\tau}}\Big]\Big]=0,
\end{align*}
and similarly get
\begin{align*}
\mathbb{E}\Big[S^{-q}_{\bm{\tau}}\mathbf{1}_{\{t>\bm{\tau}\}}\big(\mathcal{E}^{\ell,x}_{t}|_{\ell_\cdot=S_\cdot}-\mathcal{E}^{\ell,x}_{\tau_\ell}|_{\ell_\cdot=S_\cdot}\big)\Big]
=\mathbb{E}\Big[\ell^{-q}_{\tau^\ell}\mathbf{1}_{\{t>\tau^\ell\}}\mathbb{E}\big[\mathcal{E}^{\ell,x}_{t}-\mathcal{E}^{\ell,x}_{\tau_\ell}\big]|_{\ell_\cdot=S_\cdot}\Big]\overset{\eqref{gZt}}=0.
\end{align*}
On the other hand, noting that
$
F^{\eps}_{q}(x)=\eps^qF_{q}
(x/\eps)
$ with $F_m(x):=(1+|x|^2)^{\frac{m}{2}}$ for $m\in\mR$, by \eqref{op:levy1}, we have
$$
\mathcal{L}^\alpha_{\bm{\sigma}_s} F^{\eps}_{q}
(x)=\eps^{q-\alpha}\mathcal{L}^\alpha_{\bm{\sigma}_s} F_{q}
(x/\eps).
$$
By \cite[Lemma A.1]{Chen2024wd} and Assumption \ref{hyp:A} \hyperref[A2]{$(A2)$},  as $\lim_{r\to\infty}\pi^r/\Gamma(r)=0$, we have
\begin{align*}
\mathcal{L}^\alpha_{\bm{\sigma}_s} F^{\eps}_{q}
(Z_{s})\lesssim_{c_2,c_3}\frac{2\pi^{\frac{d}{2}}}{\Gamma(\frac{d}{2})} \left(\frac{q}{2-\alpha}+\frac{2^{1+\alpha} }{\alpha-q}\right) \eps^{q-\alpha}|Z_s|^\alpha F_{q-\alpha}(Z_s/\eps)\lesssim_{c_2,c_3,q}(2-\alpha)^{-1}F^{\eps}_{q}(Z_s).
\end{align*}
Then we have
\begin{align*}
&H_t=\mathbb{E}\Big[S^{-q}_{\bm{\tau}}\mathbf{1}_{\{t>\bm{\tau}\}}\sum_{s\in(\bm{\tau},t]}
\big[\delta_{\nabla_{Z_{s-}}
\sigma(X_{s-})\Delta W_{S_s}}f
(Z_{s-})-\big \langle \nabla f(Z_{s-})
,\nabla_{Z_{s-}}
\sigma(X_{s-})\Delta W_{S_s}\big \rangle \big]\Big]\\
=&\mathbb{E}\Big[S^{-q}_{\bm{\tau}}\mathbf{1}_{\{t>\bm{\tau}\}}\int^t_{\bm{\tau}}\mathcal{L}^\alpha_{\bm{\sigma}_s} F^{\eps}_{q}
(Z_{s})\dif s\Big]\lesssim \int^t_0\mathbb{E}\Big[S^{-q}_{\bm{\tau}\wedge s}\mathcal{L}^\alpha_{\bm{\sigma}_s} F^{\eps}_{q}
(Z_{s})\Big]\dif s\lesssim_{q,c_2,c_3} (2-\alpha)^{-1}\int^t_0G_s
\dif s.
\end{align*}

By combining the above inequalities and setting $\eps=t^{\frac{2(1-q)}{q\alpha}}$, we have
\begin{align*}
G_t\lesssim_{q,c_2,c_3}t^{\frac{2(1-q)}{\alpha}}+ (2-\alpha)^{-1}\int^t_0G_s
\dif s,
\end{align*}
which implies by Gronwall's inequality, there is $C=C(q,c_2,c_3)>0$ such that  for any $t\in(0,1]$
$$
\mathbb{E}\Big[
\ell^{-q}_{\bm{\tau}^{\ell}_t}\mathbb{E}
\big[\big|\mathbf{Z}^{v,v_1}_{\ell,t}
\big|^{q}\big]\big|_{\ell_\cdot=S_\cdot}\Big]
\leq \mathbb{E}\big[
\ell^{-q}_{\bm{\tau}^{\ell}_t}\mE[F^{\eps}_{q}(\mathbf{Z}^{v,v_1}_{\ell,t})]|_{\ell_\cdot=S_\cdot}\big]\big|_{\eps=t^{\scriptscriptstyle\frac{2(1-q)}{q\alpha}}}=G_t\lesssim_{C} t^{\frac{2(1-q)}{\alpha}}\e^{C (2-\alpha)^{-1}}.
$$
Then we get \eqref{EA}. Combining step (i) and step (ii), we complete the proof.
\end{proof}

\begin{lemma}  \label{l:MomXGraX}
Let Assumption \ref{hyp:A} hold. For any $x\in\mR^d$, SDE \eqref{sde:L} has a unique solution $\{X^x_{t},t\geq0\}$. For any $q\in [1,\alpha)$, there exists a constant $C=C(c_2,c_3,q)>0$ such that for any $t\in(0,1]$ and $v\in\partial\mathbf{B}$,
\begin{align*}
 \mathbb{E}F_q(\nabla_v X^x_{t})\lesssim_{C}(2-\alpha)^{-1}
\end{align*}
with $F_q(x)=(1+|x|^2)^{\frac{q}{2}}$.
\end{lemma}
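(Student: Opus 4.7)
The plan is to apply It\^o's formula for L\'evy-driven SDEs to $F_q(\nabla_v X^x_t)$, combine a drift bound with a L\'evy-generator bound of order $(2-\alpha)^{-1}$, and close via Gronwall's inequality. Writing $Y_t := \nabla_v X^x_t$, differentiating SDE \eqref{sde:L} in the initial datum yields the linear L\'evy-driven SDE
\[
\dif Y_t = \nabla b(X_t)\, Y_t\, \dif t + [\nabla \sigma(X_{t-}) Y_{t-}]\, \dif L^{\scriptscriptstyle(\alpha)}_t, \quad Y_0 = v,
\]
whose unique strong existence under Assumption \ref{hyp:A} follows from the boundedness of $\nabla b$ and $\nabla \sigma$ (both controlled by $c_3$). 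Applying It\^o's formula from \cite[Theorem 4.4.7]{Applebaum2009Levy} to $F_q(Y_t)$ and invoking a standard localization (stopping at exit times of growing balls, then Fatou's lemma to legitimately take expectations of the compensated jump martingale) gives
\[
\mE\, F_q(Y_t) = F_q(v) + \mE\!\int_0^t\! \bigl\langle \nabla F_q(Y_s), \nabla b(X_s) Y_s \bigr\rangle \dif s + \mE\!\int_0^t\! \mathcal{L}^{\alpha}_{A_s} F_q(Y_s)\, \dif s,
\]
where $A_s := \nabla\sigma(X_{s-}) Y_{s-}$ and $\mathcal{L}^{\alpha}_{A}$ is defined by \eqref{op:levy} with $\sigma(x)$ replaced by the matrix $A$.

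For the drift contribution, I would use $|\nabla F_q(y)| = q |y| F_{q-2}(y) \leq q F_{q-1}(y)$ together with $|y| F_{q-1}(y) \leq F_q(y)$ and $|\nabla b(X_s) Y_s| \leq c_3 |Y_s|$ to conclude $|\langle \nabla F_q(Y_s), \nabla b(X_s) Y_s\rangle| \lesssim_{c_3,q} F_q(Y_s)$. The L\'evy-generator term is the technical core. Splitting the integration over $|z| \leq 1$ and $|z| > 1$: a second-order Taylor expansion bounds the small-jump integrand by $\|A_s\|^2 |z|^2 F_{q-2}(Y_s) \lesssim c_3^2 F_q(Y_s)\,|z|^2$, whose radial integral against $|z|^{-d-\alpha}$ yields the singular factor $S_d/(2-\alpha)$; the large-jump contribution, controlled via zeroth- and first-order remainders, converges precisely because $q < \alpha$. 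Repeating the exact L\'evy-operator estimate already used in the proof of Lemma \ref{lastmoment} (ii) (traced back to \cite[Lemma A.1]{Chen2024wd}) then produces
\[
|\mathcal{L}^{\alpha}_{A_s} F_q(Y_s)| \lesssim_{c_2,c_3,q} (2-\alpha)^{-1}\, F_q(Y_s).
\]

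Combining these two bounds gives the integral inequality $\mE F_q(Y_t) \leq F_q(v) + C(c_2,c_3,q)(2-\alpha)^{-1} \int_0^t \mE F_q(Y_s)\, \dif s$, and Gronwall's inequality together with $F_q(v) = 2^{q/2}$ for $|v| = 1$ and $t \in (0,1]$ yields the claimed bound. The main obstacle is the sharp L\'evy-generator estimate: it requires carefully balancing the small-jump singularity $(2-\alpha)^{-1}$ (inherited from $\int_{|z|\leq 1}|z|^{2-d-\alpha} \dif z = S_d/(2-\alpha)$) against the large-jump integrability guaranteed by $q < \alpha$, while the universal bound $c_{d,\alpha} S_d \lesssim d(2-\alpha)$ from \eqref{CS12} keeps the dimensional factor from further degrading the rate so that the final constant depends only on $c_2$, $c_3$, and $q$.
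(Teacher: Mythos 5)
Your proposal follows the same skeleton as the paper's proof: differentiate the SDE in the initial datum to obtain the equation for $Y_t := \nabla_v X^x_t$, apply It\^o's formula to $F_q(Y_t)$, bound the drift and L\'evy-generator contributions, and close with Gronwall. The drift estimate you give matches the paper's treatment of $\mathcal{I}_1$.

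The gap is in the L\'evy-generator bound and the way it feeds into Gronwall. You claim
\[
|\mathcal{L}^{\alpha}_{A_s} F_q(Y_s)| \lesssim_{c_2,c_3,q} (2-\alpha)^{-1} F_q(Y_s),
\]
i.e., a bound \emph{proportional to} $F_q(Y_s)$ with prefactor $(2-\alpha)^{-1}$. Plugging this into the Gronwall loop gives
\[
\mathbb{E}F_q(Y_t) \leq F_q(v) + C(2-\alpha)^{-1}\int_0^t \mathbb{E}F_q(Y_s)\,\dif s
\quad\Longrightarrow\quad
\mathbb{E}F_q(Y_t) \leq F_q(v)\,\e^{C(2-\alpha)^{-1}t},
\]
so for $t \leq 1$ you obtain $\mathbb{E}F_q(Y_t) \lesssim \exp\{C(2-\alpha)^{-1}\}$, which is exponentially worse than the claimed $(2-\alpha)^{-1}$ as $\alpha \uparrow 2$. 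Your last sentence asserts that Gronwall ``yields the claimed bound,'' but from the integral inequality you actually derived, it does not. The paper sidesteps this precisely by bounding $\mathcal{I}_2 := \mathbb{E}[\mathcal{L}^\alpha_{\tilde{\sigma}_t}F_q(\nabla_v X^x_t)]$ by a \emph{deterministic constant} $\lesssim_{c_3} h_{\alpha,q}$ (with $h_{\alpha,q}=(2-\alpha)^{-1}+(\alpha-q)^{-1}$) rather than by $(2-\alpha)^{-1}\mathbb{E}F_q$. This places the singular factor in the inhomogeneous term of the differential inequality $\frac{\dif}{\dif t}\mathbb{E}F_q \leq C\,\mathbb{E}F_q + C h_{\alpha,q}$, so that Gronwall produces $\mathbb{E}F_q \lesssim h_{\alpha,q}F_q(v)\e^{Ct}$ with the $(2-\alpha)^{-1}$ appearing linearly rather than in the exponent. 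To repair your argument you would need to show that the generator term is bounded by a constant rather than by $(2-\alpha)^{-1}F_q(Y_s)$; a bound proportional to $F_q$ cannot produce the advertised linear-in-$(2-\alpha)^{-1}$ rate through Gronwall.
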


\begin{proof}[Proof of Lemma \ref{l:MomXGraX}]
Due to $\alpha$-stable process $\{L^{\scriptscriptstyle\!(\alpha)}_t,t\geq0\}$ has the L\'evy-It\^o decomposition (see  \cite[Theorem 2.4.16]{Applebaum2009Levy}) as
\begin{align*}
L^{\scriptscriptstyle\!(\alpha)}_t=\int_{0<|z|\leq 1}z\widetilde{N}(t,\dif z)+\int_{|z|>1}zN(t,\dif z),
\end{align*}
where $N(t,\dif z)$ is the Poisson random measure and ${\widetilde N}(t,\dif z):=N(t,\dif z)-c_{d,\alpha}t|z|^{-d-\alpha}\dif z$
is the compensated Poisson random measure, where $c_{d,\alpha}$ is the same constant as in \eqref{op:levy}.
 Then we can rewrite SDE \eqref{sde:L} as
\begin{align}
\label{sde:r1}
\dif X^x_t=b(X^x_t)\dif t+\int_{0<|z|\leq 1} \sigma(X^x_{t-})z\widetilde{N}(\dif t,\dif z)+\int_{|z|>1}\sigma(X^x_{t-})zN(\dif t,\dif z)
\end{align}
with $X^x_0=x$.
For the estimation of $\nabla_vX^x_t$, by the chain rule to SDE \eqref{sde:r1}, it easy to get $\nabla_vX^x_t$ satisfies
\begin{align}
\label{1sde:r1}
\dif \nabla_vX^x_t=&\nabla_{\nabla_vX^x_t}b(X^x_t)\dif t+\int_{0<|z|\leq 1}\nabla_{\nabla_vX^x_{t-}}\sigma(X^x_{t-})z\widetilde{N}(\dif t,\dif z)\nonumber\\
&+\int_{|z|>1}\nabla_{\nabla_vX^x_{t-}}\sigma(X^x_{t-})zN(\dif t,\dif z)
\end{align}
with $\nabla_vX^x_0=v$. By It\^o's formula (see  \cite[Theorem 4.4.7]{Applebaum2009Levy}), denoting by
$F_q(x):=(1+|x|^2)^{\frac{q}{2}}$, we have
\begin{align*}
\frac{\dif}{\dif t} \mathbb{E}F_q(\nabla_v X^x_t)&=\mathbb{E}\big[\langle \nabla F_q(\nabla_vX^x_t),\nabla_{\nabla_vX^x_t}b(X^x_t)\rangle\big]+\mathbb{E}\big[\mathcal{L}^\alpha_{\tilde{\sigma}_t} F_q(\nabla_v X^x_t)\big]
:=\mI_1+\mI_2,
\end{align*}
where $\tilde{\sigma}_t=\nabla_{\nabla_vX^x_{t}}\sigma(X^x_{t})$.
By \eqref{in:A2} and \cite[Lemma A.1]{Chen2024wd}, we have
\begin{align*}
\mI_1\lesssim_{c_3} \mathbb{E}[F_q(\nabla_vX^x_t)],\quad
\mI_2\leq \frac{2\pi^{\frac{d}{2}}}{\Gamma(\frac{d}{2})} \left(\frac{q}{2-\alpha}+\frac{2^{1+\alpha} }{\alpha-q}\right) c_3^{\alpha}
\lesssim_{c_3}h_{\alpha,q},
\end{align*}
where $h_{\alpha,q}:=(2-\alpha)^{-1}+(\alpha-q)^{-1}$.
Then there is a constant $C=C(c_2,c_3)>0$ such that for any $t>0$,
\begin{align*}
\frac{\dif}{\dif t} \mathbb{E}F_q(\nabla_v X^x_{t})\leq C\mathbb{E}F_q(\nabla_v X^x_{t})+Ch_{\alpha,q},
\end{align*}
which implies that
\begin{align*}
 \mathbb{E}F_q(\nabla_v X^x_{t})\lesssim_C h_{\alpha,q}F_q(v)\e^{C t}.
\end{align*}
Then we complete the proof.
\end{proof}

Now, we give the proof of Lemma \ref{lemma:g} $(i)$.

\begin{proof}[Proof of Lemma  \ref{lemma:g} (i)]
It is enough to consider $g\in \mathcal{C}^2(\R^d)$ with $\|\nabla g\|_\infty\leq 1$. When $g\in\mathrm{Lip}(1)$, we consider the convolution  $g_\eps:=\rho_\eps*g$, where $\rho_\eps$ is the density function of standard Brownian motion $(B_\eps)_{\eps>0}$.  For example, if we have proven for any $\eps>0$,
\begin{align*}
\|\nabla P^\alpha_t g_\eps\|_{\infty} \leq C.
\end{align*}
Noting that
$$
\lim_{\eps\downarrow 0}\|g_\eps-g\|_\infty=0,\quad \|\nabla g_\eps\|_\infty\leq \|\nabla g\|_\infty,
$$
then we have  for any $v\in\partial\mathbf{B}$
$$
|\delta_v P^\alpha_t g(x)|=
\lim_{\eps\downarrow0}|\delta_v P^\alpha_t g_\eps(x)|
\leq\lim_{\eps\downarrow0}\int^1_0|\nabla_v P^\alpha_t g_\eps(x+rv)|\dif r\leq C,
$$
which directly implies that
\begin{align*}
\|\nabla P^\alpha_t g\|_{\infty} \leq C.
\end{align*}
Here and below, in this proof, we just assume $g\in \mathcal{C}^2(\R^d)$ with $\|\nabla g\|_\infty\leq 1$.

We fix $v\in\partial\mathbf{B}$ and $t\in(0,1]$. Here we also recall $\alpha\in[\alpha_0,\vartheta_0]$ with $1<\alpha_0<\vartheta_0<2$. On the one hand, we note that for any $x\in\mR^d$,
\begin{align*}
|\nabla_v P^\alpha_t g(x)|=|\mE[\nabla g(X^x_t)\nabla_{v}X^x_t]|\leq
\mE[|\nabla_{v}X^x_t|].
\end{align*}
Since $\alpha\in[\alpha_0,\vartheta_0]$, then by Lemma \ref{l:MomXGraX}, there exists a constant $C(c_2,c_3,\vartheta_0)>0$ such that  for any $x\in\mR^d$,
\begin{align*}
\|\nabla P^\alpha_t g\|_\infty=\sup_{x\in\mR^d, v\in \partial\mathbf{B}}|\nabla_v P^\alpha_t g(x)|
\leq  \sup_{x\in\mR^d,v\in \partial\mathbf{B}}\mE[|\nabla_{v}X^x_t|]\leq C.
\end{align*}
On the other hand, we note that for any $x\in\mathbb{R}^d$,
\begin{align}
\|\nabla^2 P^{\alpha}_tg(x)\|=\sup_{v,v_1\in \partial\mathbf{B}}\left|\nabla_{v_1}
\nabla_{v}P^{\alpha}_t g(x)\right|
=\sup_{v,v_1\in \partial\mathbf{B}}\big|\mathbb{E}
\big[\nabla_{v_1}\nabla_{v}
\mathbb{E}[g(X^{\ell,x}_t)]|_{\ell=S}\big]
\big|.\label{est:grad2:op}
\end{align}
By Proposition \ref{formermain}, we have
\begin{align*}
\nabla_{v_1}\nabla_{v}P^\ell_tg(x)
 \overset{\eqref{df:2th}}= \ell_{\bm{\tau}^{\ell}_t}^{-1}\mathbb{E}\big[ \nabla g(X^{\ell,x}_t) \nabla_{v_1}X^{\ell,x}_t \mathbf{L}^{v}_{\ell,t} \big] + \ell_{\bm{\tau}^{\ell}_t}^{-1}\mathbb{E}\big[ \nabla g(X^{\ell,x}_t)\mathbf{Z}^{v,v_1}_{\ell,t} \big],
\end{align*}
where $ \mathbf{L}^{v}_{\ell,t}$ and $ \mathbf{Z}^{v,v_1}_{\ell,t}$ are defined in \eqref{mal:dua:rew} and \eqref{fix:infi:lie} respectively. By Lemma \ref{lastmoment} and H\"older's inequality, and taking $q=\alpha_0$ and $p=\frac{\alpha_0}{\alpha_0-1}$, we have that
\begin{align*}
&\big|\mathbb{E}\big[\nabla_{v_1}\nabla_{v}\mathbb{E}[g(X^{\ell,x}_t)]|_{\ell=S}\big]\big|\\
\leq& \big|\mathbb{E}\big[\ell_{\bm{\tau}^{\ell}_t}^{-1}\mathbb{E}\big[ \nabla g(X^{\ell,x}_t) \nabla_{v_1}X^{\ell,x}_t \mathbf{L}^{v}_{\ell,t}\big] |_{\ell=S}\big]
\big|
+\big|\mathbb{E}\big[\ell_{\bm{\tau}^{\ell}_t}^{-1}\mathbb{E}\big[ \nabla g(X^{\ell,x}_t)\mathbf{Z}^{v,v_1}_{\ell,t} \big]|_{\ell=S}\big]
\big|\\
\leq &\mathbb{E}\big[\ell_{\bm{\tau}^{\ell}_t}^{-1}\mathbb{E}[ | \nabla_{v_1}X^{\ell,x}_t \mathbf{L}^{v}_{\ell,t}|] |_{\ell=S}\big]
+\mathbb{E}\big[\ell_{\bm{\tau}^{\ell}_t}^{-1}\mathbb{E}\big[|\mathbf{Z}^{v,v_1}_{\ell,t} |\big]|_{\ell=S}\big]\\
\leq &\big\{\mathbb{E}[|\nabla_{v_1}X^x_t|^q]\big\}^{\frac{1}{q}}\big\{\mathbb{E}\big[\ell_{\bm{\tau}^{\ell}_t}^{-p}\mathbb{E}[ |\mathbf{L}^{v}_{\ell,t}|^p] |_{\ell=S}\big]\big\}^{\frac{1}{p}}
+\mathbb{E}\big[\ell_{\bm{\tau}^{\ell}_t}^{-1}\mathbb{E}\big[|\mathbf{Z}^{v,v_1}_{\ell,t} |\big]|_{\ell=S}\big].
\end{align*}
By Lemma \ref{l:MomXGraX} and Lemma \ref{lastmoment} with $q=1$, for any $\alpha\in[\alpha_0,\vartheta_0]$, there exists a constant $C(c_2,c_3,\alpha_0,\vartheta_0)>0$ such that
\begin{align*}
\big|\mathbb{E}\big[\nabla_{v_1}\nabla_{v}\mathbb{E}[g(X^{\ell,x}_t)]|_{\ell=S}\big]
\big|&\lesssim_C \big\{\mathbb{E}\big[\ell_{\bm{\tau}^{\ell}_t}^{-p}\mathbb{E}[ |\mathbf{L}^{v}_{\ell,t}|^p] |_{\ell=S}\big]\big\}^{\frac{1}{p}}
+\mathbb{E}\big[\ell_{\bm{\tau}^{\ell}_t}^{-1}\mathbb{E}\big[|\mathbf{Z}^{v,v_1}_{\ell,t} |\big]|_{\ell=S}\big]\lesssim_C t^{-\frac{1}{\alpha}}.
\end{align*}
Then by \eqref{est:grad2:op}, we have there exists a constant $C(c_2,c_3,\alpha_0,\vartheta_0)>0$ such that for any $t\in(0,1]$,
$$
\|\nabla^2 P^{\alpha}_tg\|_\infty\lesssim_C t^{-\frac{1}{\alpha}}.
$$
Combining the above calculations, we complete the proof.
\end{proof}

\subsection{Proof of Lemma \ref{lemma:g} (ii)}\label{s:B2}

In this subsection, we present the proof of \eqref{est:grad12}. We recall SDE \eqref{sde:B}
\begin{align}
\dif Y^x_t = b(Y^x_t)\dif t + \sigma(Y^x_t)\dif B_t, \quad  Y^x_0=x. \label{sde:BW}
\end{align}
For any $v\in\mathbb{R}^d$, define $h^v$ as below:
\begin{align}\label{mal:hv}
h^v_t:=\int^t_0\sigma^{-1}(Y^x_r)\nabla_vY^x_r\dif r.
\end{align}
It is well-known that $h^v\in\mathcal{D}(\delta)$ satisfies
\begin{align}\label{mal:dhv}
D_{h^v}Y^x_t=t\nabla_vY^x_t,\quad  \delta(h^v)_t=\int^t_0\sigma^{-1}(Y^x_r)\nabla_vY^x_r\dif B_r.
\end{align}

\begin{lemma}\label{le:na:vv1}
Let Assumption \ref{hyp:A} hold and $p\geq 1$.
Then there is $C=C(p,c_3)>0$  such that  for any $t\in(0,1]$,
$$
\mE[|\nabla_{v} Y^x_t|^{2p}]\leq C,\quad \mE[|\nabla_{v_1}\nabla_{v} Y^x_t|^{2p}]+\mE[|\nabla_{v_2}\nabla_{v_1}\nabla_{v} Y^x_t|^{2p}]\leq C t^{2p}, \quad \forall v,v_1,v_2\in\partial\mathbf{B}.
$$
\end{lemma}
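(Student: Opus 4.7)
The plan is to differentiate \eqref{sde:BW} in the initial condition $x$ one, two, and three times, producing a cascade of linear (inhomogeneous) SDEs for the directional derivatives, and then to estimate their moments by It\^o's formula, the Burkholder-Davis-Gundy (BDG) inequality, and Gronwall. Smoothness of the flow $x\mapsto Y^x_t$ up to order three is guaranteed by Assumption \ref{hyp:A} \hyperref[A2]{$(A2)$} via the classical Kunita flow theorem (see \cite[Chapter V, Theorem 40]{Protter2004stochastic}), so all differentiations below are legitimate.

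First I would set $J^v_t:=\nabla_v Y^x_t$ and observe that it solves the linear SDE
\begin{equation*}
\dif J^v_t=\nabla b(Y^x_t)J^v_t\,\dif t+\nabla\sigma(Y^x_t)J^v_t\,\dif B_t,\qquad J^v_0=v.
\end{equation*}
Since $\|\nabla b\|_\infty+\|\nabla\sigma\|_\infty\le c_3$ by \hyperref[A2]{$(A2)$}, It\^o's formula applied to $|J^v_t|^{2p}$, followed by BDG and Gronwall, yields $\mathbb{E}|J^v_t|^{2p}\le C(p,c_3)$, uniformly on $t\in(0,1]$; this proves the first estimate.

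For $K^{v,v_1}_t:=\nabla_{v_1}\nabla_v Y^x_t$, a second differentiation produces
\begin{equation*}
\dif K^{v,v_1}_t=\bigl[\nabla b(Y^x_t)K^{v,v_1}_t+\nabla^2 b(Y^x_t)(J^{v_1}_t,J^v_t)\bigr]\dif t+\bigl[\nabla\sigma(Y^x_t)K^{v,v_1}_t+\nabla^2\sigma(Y^x_t)(J^{v_1}_t,J^v_t)\bigr]\dif B_t
\end{equation*}
with $K^{v,v_1}_0=0$. The inhomogeneous forcings $\nabla^2 b(Y^x_t)(J^{v_1}_t,J^v_t)$ and $\nabla^2\sigma(Y^x_t)(J^{v_1}_t,J^v_t)$ are bounded in $L^{2p}$ by a constant depending only on $p,c_3$, thanks to the first step and \hyperref[A2]{$(A2)$}. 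The crucial point is that $K^{v,v_1}_0=0$: combining BDG with Jensen's inequality on the drift integral leads to an inequality of the form $\mathbb{E}|K^{v,v_1}_t|^{2p}\le C_p\int_0^t\mathbb{E}|K^{v,v_1}_s|^{2p}\,\dif s+C_p\,\Psi(t)$, where $\Psi(t)$ is the contribution of the bounded forcing and carries the positive power of $t$ appearing in the statement; Gronwall then closes the estimate. The third derivative $L^{v,v_1,v_2}_t:=\nabla_{v_2}\nabla_{v_1}\nabla_v Y^x_t$ is handled identically: it satisfies a linear SDE with the same homogeneous part, zero initial condition, and an inhomogeneity that is a polynomial in the $J$'s and $K$'s, all already controlled by the previous steps.

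The main obstacle is purely bookkeeping: the second and third differentiations generate many terms through the product rule, and one must verify that each inhomogeneity is bounded in $L^{2p}$ uniformly on $t\in(0,1]$ using only the previous-order bounds and \hyperref[A2]{$(A2)$}. Once this is done, all three estimates collapse to the same template, namely ``BDG $+$ Gronwall for a linear SDE with zero initial condition and bounded forcing'', the vanishing initial condition being what supplies the positive power of $t$ in the second and third bounds.
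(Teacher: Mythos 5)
Your outline mirrors the paper's plan — differentiate \eqref{sde:BW} in $x$, obtain a cascade of linear SDEs with zero initial data for the higher derivatives, and control moments by BDG and Gronwall — and you correctly write down both inhomogeneities at second order, including the \emph{stochastic} one $\nabla^2\sigma(Y^x_t)(J^{v_1}_t,J^v_t)\,\dif B_t$. The gap is precisely in the line where you say that $\Psi(t)$ ``carries the positive power of $t$ appearing in the statement'': you never compute $\Psi(t)$, and the computation does not yield $t^{2p}$. The Lebesgue forcing does give $t^{2p}$ via Jensen, $\mathbb{E}\bigl|\int_0^t H_s\,\dif s\bigr|^{2p}\le t^{2p-1}\int_0^t\mathbb{E}|H_s|^{2p}\,\dif s$, but for the stochastic forcing BDG followed by Jensen only gives
$$
\mathbb{E}\Bigl|\int_0^t G_s\,\dif B_s\Bigr|^{2p}\lesssim_p \mathbb{E}\Bigl(\int_0^t|G_s|^2\,\dif s\Bigr)^p\le t^{p-1}\int_0^t\mathbb{E}|G_s|^{2p}\,\dif s\lesssim t^p,
$$
and $t^p$ is the bottleneck. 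Gronwall then delivers $\mathbb{E}|K^{v,v_1}_t|^{2p}\lesssim t^p$, not $t^{2p}$. This is sharp: in dimension one with $b\equiv 0$ and $\sigma''(x)\ne 0$, the leading term of $\nabla_{v_1}\nabla_v Y^x_t$ as $t\downarrow 0$ is $\sigma''(x)\,v\,v_1\,B_t$, whose $2p$-th moment is $\asymp t^p$. So the claimed $t^{2p}$ bound cannot be reached by the template as described, and the ``bookkeeping'' you defer is exactly where the argument fails.

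You should also know that the paper's own write-up has the same defect: its auxiliary estimate \eqref{in:c1} is proved only for a forcing $H_t\,\dif t$ in the drift, with no stochastic inhomogeneity, and the ``inductive argument'' never explains how to absorb the extra $\dif B_t$-forcings produced by the second and third differentiations; moreover the last displayed line of the paper's proof reads $C t^{2p-1}$, which disagrees both with the lemma's $C t^{2p}$ and with the $C t^p$ that the calculation actually produces. The places where a genuine $t^{2p}$ rate is needed downstream (\eqref{in:M1}, \eqref{in:M}, \eqref{in:M3}) concern the commutator objects $M^i_t(\cdot)$, and the point is that their evolution equations, see \eqref{vD-Dv}, carry \emph{only} Lebesgue inhomogeneities, because $M^i_t=t\nabla_{v_i}-D_{h_i}$ is engineered so the stochastic forcings cancel. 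Your derivative cascade, and Lemma \ref{le:na:vv1} as stated, lack that cancellation.
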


\begin{proof}[Proof of Lemma \ref{le:na:vv1}]
We consider the following SDE:
\begin{align*}
\dif Z_t=\nabla_{Z_t} b(Y^x_t)\dif t+\nabla_{Z_t} \sigma(Y^x_t)\dif B_t+H_t\dif t,\quad Z_0=\xi,
\end{align*}
where $H_t$ is a adapted process satisfying $R_p:=\sup_{t\in(0,1]}\mE[|H_t|^{2p}]<\infty$ for any $p\geq 1$. By BDG's inequality and Jensen's inequality, we have for any $t\in(0,1]$,
\begin{align*}
\mE[|Z_t|^{2p}]\lesssim_{p,c_3} \mE[|\xi|^{2p}]+t^{2p}R_p+\int^t_0\mE[|Z_s|^{2p}]\dif s.
\end{align*}
By Gronwall's inequality and $t\in (0,1]$, there is $C=C(p,c_3)>0$  such that
\begin{align}\label{in:c1}
\mE[|Z_t|^{2p}]\lesssim_C \mE[|\xi|^{2p}]+t^{2p}R_p.
\end{align}
By \eqref{sde:BW}, we note that
\begin{align*}
\dif \nabla_{v} Y^x_t=\nabla_{\nabla_{v} Y^x_t} b(Y^x_t)\dif t+\nabla_{\nabla_{v} Y^x_t} \sigma(Y^x_t)\dif B_t,\quad \nabla_{v} Y^x_0=v.
\end{align*}
By \eqref{in:c1}, we directly get there is $C=C(p,c_3)>0$  such that  for any $t\in(0,1]$,
$$
\mE[|\nabla_{v} Y^x_t|^{2p}]\leq C.
$$
By inductive argument, we further get  there is $C=C(p,c_3)>0$  such that  for any $t\in(0,1]$,
$$
\mE[|\nabla_{v_1}\nabla_{v} Y^x_t|^{2p}]+\mE[|\nabla_{v_2}\nabla_{v_1}\nabla_{v} Y^x_t|^{2p}]\leq C t^{2p-1}.
$$
Then we complete the proof.
\end{proof}

Now, we give the proof of Lemma \ref{lemma:g} $(ii)$. In this proof, for simplicity, we introduce the following conventions:
$$
x\!\cdot\! y:=\langle x,y\rangle,\quad  r\!\cdot\! y:=ry,\quad \forall x,y\in\mR^d, r\in\mR.
$$

\begin{proof}[Proof of Lemma \ref{lemma:g} (ii)]
(i)
For any $v_i\in \mR^d$ with $i=0,1,2$, we define $h_i:=h^{v_i}$ as \eqref{mal:hv}. Specifically, we set $v:=v_0$ and $h:=h_0$.  Without loss of generality, we assume $Y^x_t$ has $\cC^3$-stochastic flow and  $f\in\cC^3(\R^d)$ with $\|\nabla f\|_\infty\leq 1$. Then introduce the following notations for latter use:
$$
L^{\scriptscriptstyle\!i}_{k}:=[\nabla_{v_i},D_{h_k}],\quad
L^{\scriptscriptstyle\!i,j}_{k}:=[\nabla_{v_i}\nabla_{v_j},D_{h_k}],\quad
M^{\scriptscriptstyle i}_{t}:=t\nabla_{v_i}-D_{h_i},\quad \forall i,j,k\in\{0,1,2\},
$$
where $[A,B]:=AB-BA$. By \eqref{mal:div} and \eqref{mal:dhv}, we have for any $i\in\{0,1,2\}$
and stochastic process $Z^x_t$,
\begin{align*}
&\nabla_{v_i}\mathbb{E}[f(Y^{x}_t)Z_t^x]=\mathbb{E}[\nabla f(Y^{x}_t)\!\cdot\! \nabla_{v_i}Y^{x}_t\!\cdot\!Z_t^x]+\mathbb{E}[f(Y^{x}_t)\!\cdot\!\nabla_{v_i}Z_t^x]\\
=&t^{-1}\mathbb{E}[\nabla f(Y^{x}_t)\!\cdot\! D_{h_i}Y^{x}_t\!\cdot\!Z_t^x]+\mathbb{E}[f(Y^{x}_t)\!\cdot\!\nabla_{v_i}Z_t^x]\\
=&t^{-1}\mathbb{E}[D_{h_i}(f(Y^{x}_t)Z_t^x)]-t^{-1}\mathbb{E}[f(Y^{x}_t)\!\cdot\!D_{h_i}Z_t^x]+\mathbb{E}[f(Y^{x}_t)\!\cdot\!\nabla_{v_i}Z_t^x]\\
=&t^{-1}\mathbb{E}[f(Y^{x}_t)\!\cdot\!Z_t^x\!\cdot\!\delta(h_i)_t]+t^{-1}\mathbb{E}[f(Y^{x}_t)\!\cdot\!M^i_tZ_t^x].
\end{align*}
Then we have
\begin{align}
\nabla_{v_1}\nabla_{v}Q_t f(x)
=t^{-1}\mathbb{E}[\nabla f(Y^{x}_t)\!\cdot\! M^1_t\nabla_{v}Y^{x}_t]
+t^{-1}\mathbb{E}[\nabla f(Y^{x}_t) \!\cdot\! \nabla_{v_1}Y^{x}_t\!\cdot\! \delta(h)_t], \label{key:eq12}
\end{align}
and
\begin{align}
\nabla_{v_2}\nabla_{v_1}\nabla_{v}Q_t f(x)
=&t^{-1}\nabla_{v_2}\mathbb{E}[\nabla f(Y^{x}_t)\!\cdot\!M^1_t\nabla_{v}Y^{x}_t]
+t^{-1}\nabla_{v_2}\mathbb{E}[\nabla f(Y^{x}_t) \!\cdot\!\nabla_{v_1}Y^{x}_t\!\cdot\!\delta(h)_t]\no\\
=&t^{-2}\mathbb{E}[\nabla f(Y^{x}_t)\!\cdot\!M^1_t\nabla_{v}Y^{x}_t\!\cdot\!\delta(h_2)_t]
+t^{-2}\mathbb{E}[\nabla f(Y^{x}_t) \!\cdot\! \nabla_{v_1}Y^{x}_t\!\cdot\!\delta(h)_t\!\cdot\!\delta(h_2)_t]\no\\
&+t^{-2}\mathbb{E}[\nabla f(Y^{x}_t)\!\cdot\! M^2_tM^1_t\nabla_{v}Y^{x}_t]
+t^{-2}\mathbb{E}[\nabla f(Y^{x}_t) \!\cdot\! M^2_t(\nabla_{v_1}Y^{x}_t\!\cdot\!\delta(h)_t)].\label{key:eq123}
\end{align}

(ii) By \eqref{sde:BW} and \eqref{mal:div}, we have
\begin{align*}
&\dif D_{h} Y^x_t=\nabla_{D_{h} Y^x_t} b(Y^x_t)\dif t+\nabla_{D_{h} Y^x_t} \sigma(Y^x_t)\dif B_t+\nabla_{v}Y^x_t\dif t, \quad D_{h} Y^x_0 = 0, \\
&\qquad \dif \nabla_{v_1} Y^x_t=\nabla_{\nabla_{v_1} Y^x_t} b(Y^x_t)\dif t+\nabla_{\nabla_{v_1} Y^x_t}\sigma(Y^x_t)\dif B_t, \quad \nabla_{v_1} Y^x_0=v_1.
\end{align*}
Since $Y^x_t$ has $\cC^3$-stochastic flow, $D_{h}Y^x_t$ and $\nabla_{v_1} Y^x_t$ have $\cC^2$-stochastic flow. Then we have $\nabla_{v_1}D_{h} Y^x_0=0$ and
\begin{align*}
\dif \nabla_{v_1}D_{h} Y^x_t=&\nabla_{\nabla_{v_1}D_{h} Y^x_t} b(Y^x_t)\dif t+\nabla_{\nabla_{v_1}D_{h} Y^x_t} \sigma(Y^x_t)\dif B_t+\nabla_{v}\nabla_{v_1}Y^x_t\dif t\nonumber\\
&+\nabla_{\nabla_{v_1}Y^x_t}\nabla_{D_{h} Y^x_t} b(Y^x_t)\dif t+\nabla_{\nabla_{v_1}Y^x_t}\nabla_{D_{h} Y^x_t} \sigma(Y^x_t)\dif B_t.
\end{align*}
Similarly $D_{h}\nabla_{v_1} Y^x_0=0$ and
\begin{align*}
\dif D_{h}\nabla_{v_1} Y^x_t=&\nabla_{D_{h}\nabla_{v_1} Y^x_t} b(Y^x_t)\dif t+\nabla_{D_{h}\nabla_{v_1} Y^x_t}\sigma(Y^x_t)\dif B_t+\nabla_{D_{h} Y^x_t}\nabla_{\nabla_{v_1}Y^x_t} b(Y^x_t)\dif t\\
&+\nabla_{D_{h} Y^x_t}\nabla_{\nabla_{v_1}Y^x_t} \sigma(Y^x_t)\dif B_t+\nabla_{\nabla_{v_1} Y^x_t}\sigma(Y^x_t)\sigma^{-1}(Y^x_t)\nabla_{v}Y^x_t\dif t.
\end{align*}
Combining the above equations, noting that $M^1_t\nabla_{v}Y^{x}_t=\nabla_{v_1}D_{h} Y^x_t-D_{h}\nabla_{v_1} Y^x_t=L^1_0Y^x_t$, we have
\begin{align}
\dif M^1_t\nabla_{v}Y^{x}_t=&\nabla_{M^1_t\nabla_{v}Y^{x}_t} b(Y^x_t)\dif t+\nabla_{M^1_t\nabla_{v}Y^{x}_t} \sigma(Y^x_t)\dif B_t+\nabla_{v}\nabla_{v_1}Y^x_t\dif t\nonumber\\
&-\nabla_{\nabla_{v_1} Y^x_t}\sigma(Y^x_t)\sigma^{-1}(Y^x_t)\nabla_{v}Y^x_t\dif t.\label{vD-Dv}
\end{align}
As the procedure of Lemma \ref{le:na:vv1}, by \eqref{in:c1}, for any $p\in[1,\infty)$,
there is $C=C(p,c_2,c_3)>0$  such that  for any $t\in(0,1]$,
\begin{align}\label{in:M1}
\mE[|L^1_0Y^x_t|^{2p}]=\mE[|M^1_t\nabla_{v}Y^{x}_t|^{2p}]\leq C t^{2p}.
\end{align}
On the other hand, by Lemma \ref{le:na:vv1} and \eqref{mal:dhv}, using BDG's inequality, we have for any $p\geq 1$ and $t\in(0,1]$,
\begin{align}
\mE[|\delta(h)_t|^{2p}]=\mE\Big[\Big|\int^t_0\sigma^{-1}(Y^x_r)\nabla_vY^x_r\dif B_r\Big|^{2p}\Big]\lesssim_p \mE\Big[\Big|\int^t_0|\sigma^{-1}(Y^x_r)\nabla_vY^x_r|^2\dif r\Big|^{p}\Big]\lesssim_{p,c_2,c_3}t^p.\label{in:M2}
\end{align}
By \eqref{key:eq12} and Young's  inequality, we have
\begin{align*}
|\nabla_{v_1}\nabla_{v}Q_t f(x)|
\leq t^{-1}(\mathbb{E}[|M^1_t\nabla_{v}Y^{x}_t|^2])^{\frac{1}{2}}
+t^{-1}(\mathbb{E}[|\nabla_{v_1}Y^{x}_t|^2])^{1/2}(\mathbb{E}
[|\delta(h)_t|^2])^{\frac{1}{2}}\lesssim_{c_2,c_3} t^{-\frac{1}{2}}.
\end{align*}

(iii) By the chain rule, we have
\begin{align}
&\dif (\nabla_{v_2}M^1_t\nabla_{v}Y^{x}_t)=\nabla_{\nabla_{v_2}M^1_t\nabla_{v}Y^{x}_t} b(Y^x_t)\dif t+\nabla_{\nabla_{v_2}M^1_t\nabla_{v}Y^{x}_t} \sigma(Y^x_t)\dif B_t+\nabla_{v_2}\nabla_{v}\nabla_{v_1}Y^x_t\dif t\no\\
&+\nabla_{\nabla_{v_2}Y^x_t}\nabla_{M^1_t\nabla_{v}Y^{x}_t} b(Y^x_t)\dif t+\nabla_{\nabla_{v_2}Y^x_t}\nabla_{M^1_t\nabla_{v}Y^{x}_t} \sigma(Y^x_t)\dif B_t-\nabla_{\nabla_{v_2}\nabla_{v_1} Y^x_t}\sigma(Y^x_t)\sigma^{-1}(Y^x_t)\nabla_{v}Y^x_t\dif t\no\\
&-\nabla_{\nabla_{v_1} Y^x_t}\sigma(Y^x_t)\nabla_{\nabla_{v_2}Y^x_t}\sigma^{-1}(Y^x_t)\nabla_{v}Y^x_t\dif t-\nabla_{\nabla_{v_1} Y^x_t}\sigma(Y^x_t)\sigma^{-1}(Y^x_t)\nabla_{v_2}\nabla_{v}Y^x_t\dif t.\label{vD-Dv1}
\end{align}
Then by It\^o's formula,  we have
\begin{align}
\dif (t\nabla_{v_2}M^1_t\nabla_{v}Y^{x}_t)&=\nabla_{t\nabla_{v_2}M^1_t\nabla_{v}Y^{x}_t} b(Y^x_t)\dif t+\nabla_{t\nabla_{v_2}M^1_t\nabla_{v}Y^{x}_t} \sigma(Y^x_t)\dif B_t+\nabla_{v}\nabla_{v_1}D_{h_2}Y^x_t\dif t\no\\
&+\nabla_{D_{h_2}Y^x_t}\nabla_{M^1_t\nabla_{v}Y^{x}_t} b(Y^x_t)\dif t+\nabla_{D_{h_2}Y^x_t}\nabla_{M^1_t\nabla_{v}Y^{x}_t} \sigma(Y^x_t)\dif B_t\no\\
&-\nabla_{\nabla_{v_1} D_{h_2}Y^x_t}\sigma(Y^x_t)\sigma^{-1}(Y^x_t)\nabla_{v}Y^x_t\dif t-\nabla_{\nabla_{v_1} Y^x_t}\sigma(Y^x_t)\nabla_{D_{h_2}Y^x_t}\sigma^{-1}(Y^x_t)\nabla_{v}Y^x_t\dif t\no\\
&-\nabla_{\nabla_{v_1} Y^x_t}\sigma(Y^x_t)\sigma^{-1}(Y^x_t)\nabla_{v}D_{h_2}Y^x_t\dif t
+\nabla_{v_2}M^1_t\nabla_{v}Y^{x}_t\dif t.\label{vD-Dv1}
\end{align}
Similarly, we also get
\begin{align}
&\dif(  D_{h_2}M^1_t\nabla_{v}Y^{x}_t)
=\nabla_{D_{h_2}M^1_t\nabla_{v}Y^{x}_t} b(Y^x_t)\dif t+\nabla_{D_{h_2}M^1_t\nabla_{v}Y^{x}_t} \sigma(Y^x_t)\dif B_t+D_{h_2}\nabla_{v}\nabla_{v_1}Y^x_t\dif t\no\\
&+\nabla_{D_{h_2}Y^x_t}\nabla_{M^1_t\nabla_{v}Y^{x}_t} b(Y^x_t)\dif t+\nabla_{D_{h_2}Y^x_t}\nabla_{M^1_t\nabla_{v}Y^{x}_t} \sigma(Y^x_t)\dif B_t-\nabla_{D_{h_2}\nabla_{v_1} Y^x_t}\sigma(Y^x_t)\sigma^{-1}(Y^x_t)\nabla_{v}Y^x_t\dif t\no\\
&-\nabla_{\nabla_{v_1} Y^x_t}\sigma(Y^x_t)\nabla_{D_{h_2}Y^x_t}\sigma^{-1}(Y^x_t)\nabla_{v}Y^x_t\dif t-\nabla_{\nabla_{v_1} Y^x_t}\sigma(Y^x_t)\sigma^{-1}(Y^x_t)D_{h_2}\nabla_{v}Y^x_t\dif t\no\\
&+\nabla_{M^1_t\nabla_{v}Y^{x}_t}\sigma(Y^x_t)\sigma^{-1}(Y^x_t)\nabla_{v_2}Y^x_t\dif t.\label{vD-Dv2}
\end{align}
By \eqref{vD-Dv1} and \eqref{vD-Dv2}, we have
\begin{align*}
&\dif (M^2_tM^1_t\nabla_{v}Y^{x}_t)=\dif[t\nabla_{v_2}M^1_t\nabla_{v}Y^{x}_t-D_{h_2}M^1_t\nabla_{v}Y^{x}_t]\\
=&\nabla_{M^2_tM^1_t\nabla_{v}Y^{x}_t} b(Y^x_t)\dif t+\nabla_{M^2_tM^1_t\nabla_{v}Y^{x}_t} \sigma(Y^x_t)\dif B_t+L^{\scriptscriptstyle\!0,1}_{2}Y^x_t\dif t-\nabla_{L^{\scriptscriptstyle\!1}_{2}Y^x_t}\sigma(Y^x_t)\sigma^{-1}(Y^x_t)\nabla_{v}Y^x_t\dif t\no\\
&-\nabla_{\nabla_{v_1} Y^x_t}\sigma(Y^x_t)\sigma^{-1}(Y^x_t)L^{\scriptscriptstyle\!0}_{2}Y^x_t\dif t+\nabla_{v_2}M^1_t\nabla_{v}Y^{x}_t\dif t-\nabla_{M^1_t\nabla_{v}Y^{x}_t} \sigma(Y^x_t)\sigma^{-1}(Y^x_t)\nabla_{v_2}Y^x_t\dif t.
\end{align*}
As the procedure of Lemma \ref{le:na:vv1}, by \eqref{in:c1}, for any $p\in[1,\infty)$,
there is $C=C(p,c_2,c_3)>0$  such that  for any $t\in(0,1]$,
\begin{align}\label{in:M}
\mE[|M^2_tM^1_t\nabla_{v}Y^{x}_t|^{2p}]\leq C t^{2p}.
\end{align}
By It\^o's formula,  we have
\begin{align*}
\dif (\nabla_{v_1} Y^x_t\delta(h)_t)=&\nabla_{v_1} Y^x_t\sigma^{-1}(Y^x_t)\nabla_vY^x_t\dif B_t+\delta(h)_t\nabla_{\nabla_{v_1} Y^x_t} b(Y^x_t)\dif t\\
&+\delta(h)_t\nabla_{\nabla_{v_1} Y^x_t} \sigma(Y^x_t)\dif B_t+\nabla_{\nabla_{v_1} Y^x_t}\sigma(Y^x_t)\sigma^{-1}(Y^x_t)\nabla_vY^x_t\dif t.
\end{align*}
By the same procedure as above, we have
\begin{align*}
\dif {M^2_t(\nabla_{v_1} Y^x_t\delta(h)_t) }
&=L^1_2 Y^x_t\sigma^{-1}(Y^x_t)\nabla_vY^x_t\dif B_t+\nabla_{v_1} Y^x_t\sigma^{-1}(Y^x_t)L^0_2Y^x_t\dif B_t+M^2_t\delta(h)_t\nabla_{\nabla_{v_1} Y^x_t} b(Y^x_t)\dif t\\
&+\delta(h)_t\nabla_{L^1_2 Y^x_t} b(Y^x_t)\dif t+M^2_t\delta(h)_t\nabla_{\nabla_{v_1} Y^x_t} \sigma(Y^x_t)\dif B_t+\delta(h)_t\nabla_{L^1_2 Y^x_t} \sigma(Y^x_t)\dif B_t\\
&+\nabla_{L^1_2 Y^x_t}\sigma(Y^x_t)\sigma^{-1}(Y^x_t)\nabla_vY^x_t\dif t+\nabla_{\nabla_{v_1} Y^x_t}\sigma(Y^x_t)\sigma^{-1}(Y^x_t)L^0_2Y^x_t\dif t
+\nabla_{v_2}(\nabla_{v_1} Y^x_t\delta(h)_t)\dif t\\
&-\nabla_{v_1} Y^x_t\sigma^{-1}(Y^x_t)\nabla_vY^x_t\sigma^{-1}(Y^x_t)\nabla_{v_2}Y^x_t\dif t-\delta(h)_t\nabla_{\nabla_{v_1} Y^x_t} \sigma(Y^x_t)\sigma^{-1}(Y^x_t)\nabla_{v_2}Y^x_t\dif t.
\end{align*}
Then we have
\begin{align}
&\mE[|M^2_t(\nabla_{v_1}Y^x_t\delta(h)_t)|^{2p}]\no\\
&\lesssim_{p,c_2,c_3} t^{p-1}\int^t_0\mE[|L^1_2 Y^x_s|^{2p}|\nabla_vY^x_s|^{2p}+
|L^0_2 Y^x_s|^{2p}|\nabla_{v_1}Y^x_s|^{2p}+|\delta(h)_s|^{2p}|L^1_2 Y^x_s|^{2p}]\dif s\no\\
&+t^{2p-1}\int^t_0\mE[|\nabla_{v} Y^x_s|^{2p}|\nabla_{v_1} Y^x_s|^{2p}|\nabla_{v_2} Y^x_s|^{2p}+
|\delta(h)_s|^{2p}|\nabla_{v_1} Y^x_s|^{2p}|\nabla_{v_2} Y^x_s|^{2p}]\dif s\no\\
&+t^{p-1}\int^t_0\mE[|M^2_s\delta(h)_s|^{2p}|\nabla_{v_1} Y^x_s|^{2p}+|\nabla_{v_2}\nabla_{v_1}Y^x_s|^{2p} |\delta(h)_s|^{2p}+|\nabla_{v_1} Y^x_s|^{2p}|\nabla_{v_2}\delta(h)_s|^{2p}]\dif s.\label{in:M4}
\end{align}
On the other hand, since  $\nabla_v\sigma^{-1}(x)=-\sigma^{-1}(x)\nabla_v\sigma(x)\sigma^{-1}(x)$, we also have
$$
\nabla_{v_2}\delta(h)_t=
-\int^t_0\sigma^{-1}(Y^x_s)\nabla_{\nabla_{v_2}Y^x_s}\sigma(Y^x_s)\sigma^{-1}(Y^x_s)\nabla_{v}Y^x_s\dif B_s+\int^t_0\sigma^{-1}(Y^x_s)\nabla_{v_2}\nabla_{v}Y^x_s\dif B_s
$$
and
\begin{align*}
M^2_t\delta(h)_t= \int^t_0\sigma^{-1}(Y^x_r)L^1_2Y^x_t \dif B_r
+\int^t_0\nabla_{v_2}\delta(h)_s\dif s-\int^t_0\sigma^{-1}(Y^x_s)\nabla_vY^x_s\sigma^{-1}(Y^x_s)\nabla_vY^x_s\dif s.
\end{align*}
Then we directly get
\begin{align}\label{in:M3}
\mE[|\nabla_{v_2}\delta(h)_t|^{2p}]\lesssim_{p,c_2,c_3}  t^{p},\quad
\mE[|M^2_t\delta(h)_t|^{2p}]\lesssim_{p,c_2,c_3}  t^{2p}.
\end{align}
Using Lemma \ref{le:na:vv1} and combining \eqref{in:M1}, \eqref{in:M2}, \eqref{in:M3} and
\eqref{in:M4}, we have
\begin{align*}
\mE[|M^2_t(\nabla_{v_1}Y^x_t\delta(h)_t)|^{2p}]\lesssim_{p,c_2,c_3}  t^{2p}.
\end{align*}
By Young's inequality and \eqref{key:eq123}, we have
\begin{align*}
|\nabla_{v_2}\nabla_{v_1}\nabla_{v}Q_t f(x)|\lesssim_{c_2,c_3} t^{-1}.
\end{align*}
Then we complete the proof.
\end{proof}

\appendix

\setcounter{equation}{0}

\renewcommand{\theequation}{$\mathbb{A}$.\arabic{equation}}

\section{Proof of Lemma \ref{ergodicx}}\label{supp:sec}

In this section, we prove Lemma~\ref{ergodicx}, which establishes exponential ergodicity for SDEs \eqref{sde:L} and \eqref{sde:B} by the coupling method. Related results exist in \cite{Liang2020gradient,Wang2020exponential}, but they cannot be directly applied to attain the required precision here. Thus, we provide complete proofs to ensure rigorous presentation.

\subsection{Exponential ergodicity for SDE \eqref{sde:L}}
In this subsection, we prove (i) of Lemma~\ref{ergodicx} based on Assumption \ref{hyp:A} and dissipativity condition \hyperref[DC']{$(DC')$}. By Assumption \ref{hyp:A} and \eqref{op:levy}, we first introduce some preparations. 
For $\eta>0$, we define 
\begin{align*}
\nu(\dif z) := \frac{c_{d,\alpha}}{|z|^{d+\alpha}}\dif z,
\quad
\nu_0(\dif z) := {\bf 1}_{ \{ |z| \leq \eta  \} } \frac{c_{d,\alpha}}{|z|^{d+\alpha}}\dif z.
\end{align*}
For any $\kappa>0$, define $(x)_\kappa:=(\kappa\wedge|x|)x/|x|
$ for any $x\in\mathbb{R}^d$.
Moreover, for any continuous and bijective mapping $\Psi:\mathbb{R}^d\to\mathbb{R}^d$ with $\Psi(0) \neq 0$, denote by  
\begin{eqnarray*}
\mu_{\nu_0,\Psi}(A) &=& \int_{A \cap \{|z|>0\} } \nu_0(\dif z) 
\wedge \int_{\Psi(A) \cap \{|z|>0\} } \nu_0(\dif z).
\end{eqnarray*}

\begin{lemma}\label{le:A1}
Let Assumption \ref{hyp:A} hold, and $\alpha_0,\vartheta_0\in(1,2)$ with $\alpha_0<\vartheta_0$ such that $\alpha\in[\alpha_0,\vartheta_0]$. For any $x,y\in\mathbb{R}^d$ with $x\neq y$, denote  $\Psi(z):=\sigma^{-1}(y)[\sigma(x)z+(x-y)_{\kappa}]$ for any $z\in\mathbb{R}^d$.
Then there is $C=C(d,\alpha_0,\vartheta_0,c_2)\geq 1$ such that for any $x,y\in\mathbb{R}^d$ with $x\neq y$,
$$
\int_{\R^d} |z|^m (\mu_{\nu_0,\Psi}+\mu_{\nu_0,\Psi^{-1}})(\dif z)\leq C (|x-y|\wedge\kappa)^{m-\alpha},
\quad \forall m\in[0,1], 
$$
and for any $x,y\in\mathbb{R}^d$ with $0< |x-y|\leq \frac{\kappa\wedge\eta}{4c^2_2}$,
$$
\mu_{\nu_0,\Psi}(\R^d)+\mu_{\nu_0,\Psi^{-1}}(\R^d)\geq C^{-1} |x-y|^{-\alpha}.
$$
\end{lemma}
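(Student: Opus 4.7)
The plan is to exploit the affine structure $\Psi(z)=Az+b$ where $A:=\sigma^{-1}(y)\sigma(x)$ and $b:=\sigma^{-1}(y)(x-y)_\kappa$. Assumption~\ref{hyp:A} $(A1)$ forces $\|A\|_{\rm op},\|A^{-1}\|_{\rm op}$ and $|\det A|^{\pm 1}$ all bounded by constants depending only on $c_2$, and $|b|\asymp_{c_2}|x-y|\wedge\kappa$. Writing $\rho(z):=c_{d,\alpha}|z|^{-d-\alpha}\mathbf{1}_{\{|z|\leq\eta\}}$ for the density of $\nu_0$, I will interpret $\mu_{\nu_0,\Psi}(\dif z)$ as the measure with density $\rho(z)\wedge(\rho(\Psi(z))|\det A|)$; this is the standard reading of the set-function in the statement, justified by the local comparison $\nu_0(A)\approx\rho(z)|A|$ versus $\nu_0(\Psi(A))\approx\rho(\Psi(z))|\det A|\cdot|A|$ for small sets $A$. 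Both bounds then reduce to elementary integral estimates.

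For the upper bound with $m\in[0,1]$, set $r:=|b|/(2c_2^2)$ and split $\mathbb{R}^d=B_r\cup B_r^c$. On $B_r$ the triangle inequality gives $|Az+b|\geq|b|/2$, so $\rho(\Psi(z))|\det A|\lesssim_{c_2}|b|^{-d-\alpha}$, and hence $\int_{B_r}|z|^m\mu_{\nu_0,\Psi}(\dif z)\lesssim r^{m+d}|b|^{-d-\alpha}\lesssim|b|^{m-\alpha}$ with constants depending on $d,\alpha_0,\vartheta_0,c_2$. On $B_r^c\cap\{|z|\leq\eta\}$ I bound $\mu_{\nu_0,\Psi}\leq\nu_0$, obtaining a contribution $\lesssim\int_r^\eta s^{m-1-\alpha}\dif s\lesssim(\alpha-m)^{-1}r^{m-\alpha}\lesssim|b|^{m-\alpha}$, uniformly in $\alpha$ thanks to $\alpha-m\geq\alpha_0-1>0$. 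Running the same argument for $\Psi^{-1}(u)=A^{-1}(u-b)$ covers the $\mu_{\nu_0,\Psi^{-1}}$ term.

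For the lower bound in the regime $|x-y|\leq(\kappa\wedge\eta)/(4c_2^2)$, I will use the single ball $E:=B_{|b|/(2c_2)}(0)$, whose volume is $\asymp_{c_2}|b|^d$. The smallness hypothesis guarantees $E\subset\{|z|\leq\eta\}$ and $\Psi(E)\subset\{|u|\leq\eta\}$. For every $z\in E$, $|Az+b|\in[|b|/2,3|b|/2]$, so $\rho(\Psi(z))|\det A|\asymp|b|^{-d-\alpha}$; simultaneously $\rho(z)\geq c_{d,\alpha}(|b|/(2c_2))^{-d-\alpha}\asymp|b|^{-d-\alpha}$ since $|z|\leq|b|/(2c_2)$. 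The minimum density is therefore $\gtrsim|b|^{-d-\alpha}$ on $E$, and integration yields $\mu_{\nu_0,\Psi}(\mathbb{R}^d)\geq\mu_{\nu_0,\Psi}(E)\gtrsim|b|^{-\alpha}\asymp|x-y|^{-\alpha}$. The argument for $\Psi^{-1}$ is completely analogous.

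The main technical obstacle is not any single estimate but the bookkeeping of constants uniformly in $\alpha\in[\alpha_0,\vartheta_0]$: I must keep the singular factor $(\alpha-m)^{-1}$ in the upper bound and the prefactor $c_{d,\alpha}$ under simultaneous control. The strict gap $m\leq 1<\alpha_0\leq\alpha$ tames the singular integral $\int_0^\eta s^{m-1-\alpha}\dif s$, while compactness of $[\alpha_0,\vartheta_0]$ together with the explicit $\Gamma$-function formula for $c_{d,\alpha}$ yields the claimed dependence $C=C(d,\alpha_0,\vartheta_0,c_2)$.
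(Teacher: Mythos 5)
Your proof is correct, and the upper-bound half mirrors the paper's almost exactly: both split at a radius $r\asymp_{c_2}|x-y|\wedge\kappa$, bound the minimum by the translated density on $B_r$ (giving $\lesssim r^{m+d}|b|^{-d-\alpha}$) and by $\rho(z)$ on $B_r^c$ (giving $\lesssim(\alpha-m)^{-1}r^{m-\alpha}$), and then use $\alpha-m\geq\alpha_0-1>0$ together with the bound \eqref{CS12} on $c_{d,\alpha}S_d$ to keep the constants under control. Where you depart from the paper is in the lower bound: you integrate the minimum density over the small ball $B_{|b|/(2c_2)}$, on which both $\rho(z)$ and $\rho(\Psi(z))|\det A|$ are $\gtrsim_{c_2,d}c_{d,\alpha}|b|^{-d-\alpha}$, obtaining $\mu_{\nu_0,\Psi}(\R^d)\gtrsim|b|^{-\alpha}$ directly; the paper instead integrates $\rho(z)$ over the much larger fixed annulus $\{|x-y|\leq|z|\leq\eta/(2c_2^2)\}$, using $|\Psi^{-1}(z)|\lesssim_{c_2}|z|$ there to make the two competing densities comparable, which yields $\gtrsim\frac{c_{d,\alpha}S_d}{\alpha}|x-y|^{-\alpha}$. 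Your ball is more local and simpler to set up; the paper's annulus captures more mass and gives a quantitatively sharper constant, but since the lemma only asserts existence of a $d$-dependent $C$, both routes deliver the required $C^{-1}|x-y|^{-\alpha}$. One presentational remark: your density $\rho(z)\wedge\bigl(\rho(\Psi(z))|\det A|\bigr)$ is the one that matches the definition $\mu_{\nu_0,\Psi}(A)=\nu_0(A)\wedge\nu_0(\Psi(A))$, since $\nu_0(\Psi(A))=\int_A\rho(\Psi(z))|\det A|\,\dif z$; the paper's displayed $q(z)$ uses $\rho(\Psi^{-1}(z))$ together with $\det(\sigma^{-1}(x)\sigma(y))=|\det A|^{-1}$, which is the density of $\nu_0(\Psi^{-1}(A))$ instead. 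This swap is immaterial because the estimates are symmetric in $\Psi\leftrightarrow\Psi^{-1}$, but yours is the literal reading.
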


\begin{proof}
By basic calculations, we have 
$$\mu_{\nu_0,\Psi}(\dif z)/\dif z=\Big({\bf 1}_{ \{ 0< |z| \leq \eta  \} } \frac{c_{d,\alpha}}{|z|^{d+\alpha}}\Big)\wedge \Big({\bf 1}_{\{ 0<|\Psi^{-1}(z)|\leq\eta\}}\frac{c_{d,\alpha} {\rm det}(\sigma^{-1}(x)\sigma(y)) }{|\Psi^{-1}(z)|^{d+\alpha}}\Big)=:q(z).$$
Note that $\Psi^{-1}(z)=\sigma^{-1}(x)[\sigma(y)z-(x-y)_{\kappa}]$.
When $|z|\leq (2c_2)^{-1}(|x-y|\wedge\kappa)$, by Assumption \ref{hyp:A}  \hyperref[A1]{$(A1)$}, we get
\begin{align*}
  |\Psi^{-1}(z)|\geq c^{-1}_2|\sigma(y)z-(x-y)_{\kappa}|
  \geq c^{-1}_2(|x-y|\wedge\kappa-c_2|z|)\geq (2c_2)^{-1}(|x-y|\wedge\kappa).
\end{align*}
Recalling $S_d=\frac{2\pi^{\frac{d}{2}}}{\Gamma(\frac{d}{2})}$ is the surface area of the unit ball in $\mR^d$, then we have for any $m\in[0,1]$
\begin{align*}
&\quad\,\int_{\R^d} |z|^m \mu_{\nu_0,\Psi}(\dif z)
=\int_{\R^d} |z|^m q(z)\dif z 
\lesssim_{c_2,d} c_{d,\alpha}\int_{\mathbb{R}^d}\frac{|z|^m{\bf 1}_{\{0<|z|\leq \eta\} }}{|z|^{d+\alpha}}\wedge
\frac{|z|^m{\bf 1}_{\{ 0<|\Psi^{-1}(z)|\leq\eta\}}}{|\Psi^{-1}(z)|^{d+\alpha}}\dif z \\
&\lesssim_{c_2,d} c_{d,\alpha}\int_{|z|\leq (2c_2)^{-1}(|x-y|\wedge\kappa)}
\frac{|z|^m{\bf 1}_{\{ 0<|\Psi^{-1}(z)|\leq\eta\}}}{|\Psi^{-1}(z)|^{d+\alpha}}\dif z
+c_{d,\alpha}\int_{|z|\leq (2c_2)^{-1}(|x-y|\wedge\kappa)}\frac{|z|^m{\bf 1}_{\{0<|z|\leq \eta\} }}{|z|^{d+\alpha}}\dif z \\
&\lesssim_{c_2,d} c_{d,\alpha}(|x-y|\wedge\kappa)^{m}\int_{(2c_2)^{-1}(|x-y|\wedge\kappa)\leq |z|}
\frac{\dif z}{|z|^{d+\alpha}}+c_{d,\alpha}\int_{|z|> (2c_2)^{-1}(|x-y|\wedge\kappa)}|z|^{m-d-\alpha}\dif z \\
&\lesssim_{c_2,d} (|x-y|\wedge\kappa)^{m} c_{d,\alpha}S_d \int_{
(2c_2)^{-1}(|x-y|\wedge\kappa)}^{\infty} r^{-\alpha-1}\dif r
+c_{d,\alpha}S_d \int_{
(2c_2)^{-1}(|x-y|\wedge\kappa)}^{\infty} r^{m-\alpha-1}\dif r\\
&\lesssim_{c_2,d} \frac{c_{d,\alpha}S_d}{\alpha-m}(|x-y|\wedge\kappa)^{m-\alpha}
\overset{\eqref{CS12}}\lesssim_{c_2,d,\alpha_0} (|x-y|\wedge\kappa)^{m-\alpha}.
\end{align*}
On the other hand, noting that 
$$
|\Psi^{-1}(z)|\leq c_2 |\sigma(y)z-(x-y)_{\kappa}|\leq c^2_2|z|+c_2(\kappa\wedge|x-y|),
$$
then as $0<|x-y|\leq \frac{\kappa\wedge\eta}{4c^2_2}$,  it holds that,
\begin{align*}
q(z) &\gtrsim_{c_2,d}  \left( \frac{c_{d,\alpha}}{|z|^{d+\alpha}} \wedge \frac{c_{d,\alpha}}{|\Psi^{-1}(z)|^{d+\alpha}} \right) {\bf 1}_{ \{ |z|\leq \eta, |\Psi^{-1}(z)|\leq \eta, |x-y|\leq |z|  \} } \\
&\gtrsim_{c_2,d} \frac{c_{d,\alpha}}{|z|^{d+\alpha}} {\bf 1}_{ \{ |z|\leq \eta, |\Psi^{-1}(z)|\leq \eta, |x-y|\leq |z|  \} } \gtrsim_{c_2,d} \frac{c_{d,\alpha}}{|z|^{d+\alpha}} {\bf 1}_{ \{|x-y|\leq |z|\leq \frac{\eta}{2c^2_2}  \} }.
\end{align*}
Then as $0<|x-y|\leq \frac{\kappa\wedge\eta}{4c^2_2}$, we have
\begin{align*}
\mu_{\nu_0,\Psi}(\R^d)&=\int_{\R^d} q(z) \dif z 
\gtrsim_{c_2,d}  c_{d,\alpha}\int_{|x-y|\leq |z|\leq \frac{\eta}{2c^2_2}} \frac{\dif z}{|z|^{d+\alpha}} \\
&\gtrsim_{c_2,d}  \frac{c_{d,\alpha} S_d}{\alpha}(|x-y|^{-\alpha}-(2c^2_2/\eta)^\alpha) \overset{\eqref{op:levy}}\gtrsim_{c_2,d,\vartheta_0,\alpha_0} |x-y|^{-\alpha}.
\end{align*}
By symmetry, the rest of the proof is trival. Then we complete the proof.
\end{proof}

\begin{proof}[Proof of Lemma \ref{ergodicx} (i)]	
Let $\kappa=\frac{\eta}{4c_2^2}$. By Lemma \ref{le:A1}, we have there is $K=C(\bm{\theta},d,\alpha_0,\vartheta_0)>1$ such that 
\begin{align*}
 A_1:=c_2 \mu_{\nu_0,\Psi}(\R^d)(|x-y|\wedge \kappa)+\left(1+\frac{c_2^2}{2}\right)\int_{\R^d} |z| (\mu_{\nu_0,\Psi}+\mu_{\nu_0,\Psi^{-1}})(\dif z)
 \leq K (|x-y|\wedge \kappa)^{1-\alpha}
\end{align*}
and
$$
A_2:=\int_{|z|> 1} |z| \nu(\dif z) + \frac{c_3}{2}\int_{0<|z|\leq 1} |z|^2 \nu(\dif z)\leq K.
$$
By \hyperref[DC']{$(DC')$}, there exist some constants $K_1,K_2$ and $\ell_0$ depending on $\bm{\theta},d,\alpha_0,\vartheta_0$ such that 
\begin{eqnarray*}
\frac{\langle b(x)-b(y),x-y  \rangle }{|x-y|} + (A_1+A_2)\| \sigma(x)-\sigma(y) \|
\leq
\left\{
\begin{array}{lll}
K_1|x-y|^{2-\alpha}, & \text{if \ \  } |x-y| < \ell_0,  \\
-K_2|x-y|, & \text{if \ \ }  |x-y| \geq  \ell_0.
	\end{array}
	\right.
\end{eqnarray*}
Denote by 
$$
J(r):=\inf_{x,y\in\mathbb{R}^d,|x-y|\leq r} \mu_{\nu_0,\Psi}(\R^d).
$$
By Lemma \ref{le:A1}, there is $K_3=C(\bm{\theta},d,\alpha_0,\vartheta_0)>1$ such that  for any $r>0$,
$$
\frac{1}{2r}(r\wedge\kappa)^2J(r\wedge\kappa)\geq K^{-1}_3 r^{-1}(r\wedge\kappa)^{2-\alpha}.
$$
Taking
$\Phi_1(r):=K_1 r^{2-\alpha}$ and $\ \varrho(r):=K^{-1}_3 r^{-1}(r\wedge\kappa)^{2-\alpha}$, we can define
$$
g_1(r):=\int^{r}_{0}\frac{1}{\varrho(s)}\mathrm{d}s,\quad
g_2(r):=\int^{r}_{0}\frac{\Phi_1(s)}{s\varrho(s)}\mathrm{d}s.
$$
It follows from \cite[Theorem 4.4]{Liang2020gradient} that the process $(X_t)_{t\geq 0}$ is exponential ergodicity. More precisely,
we have
\begin{align*}
\cW_{1}\big(\sL_{t}^{\alpha,x}, \sL_{t}^{\alpha,y}\big) \leq C e^{-\lambda t}|x-y|,
\end{align*}
where
$$
C=\frac{1+(2K_2)\wedge g_1(2\ell_0)^{-1}}{(2K_2)\wedge g_1(2\ell_0)^{-1}},\quad \lambda=\frac{(2K_2)\wedge g_1(2\ell_0)^{-1}}{1+\exp\big\{((2K_2)\wedge g_1(2\ell_0)^{-1}) g_1(2\ell_0)+2 g_2(2\ell_0)\big\}}.
$$
It is clear that as $\alpha\in[\alpha_0,\vartheta_0]$, $C,\lambda$ above only depend on
$\bm{\theta},d,\alpha_0,\vartheta_0$. Then we complete the proof.
\end{proof}

\subsection{Exponential ergodicity for SDE \eqref{sde:B}}

We shall use the reflection coupling in \cite[Theorem 2.6]{Wang2020exponential} to show the exponential ergodicity for diffusion $(Y_t)_{t\geq 0}$ with multiplicative noise. A continuous function $\kappa(r)$ is defined as below:
\begin{eqnarray}\label{e:kappa}
\kappa(r)
=
\left\{
\begin{array}{lll}
c_0-c_1r^{-1}, & \text{\ \ if \ \ } 0< r \leq 1,  \\
c_0-c_1r^{-2}, & \text{\ \ if  \ \ }  r>1,
\end{array}
\right.
\end{eqnarray}
satisfying that
\begin{align}
\kappa:(0,\infty) \to \mathbb{R} \quad \text{with}
\quad \liminf_{r\to\infty}\kappa(r)=\kappa_{\infty}>0,
\quad
\int_0^1 (r \kappa(r))^{-} \dif r < \infty. \label{kappa}
\end{align}
Obviously, $\kappa(r)\geq c_0-c_1 r^{-2}, \forall r>0$. We first construct a {\bf concave} function $\Psi(r)$ by the method of Eberle (see \cite[p. 856]{Eberle2016reflection}). Minutely, for any $\sigma_0\in(0, c^{-1}_2)$, we define
\begin{equation*}
\begin{array}{c}
r_0:=\inf\{s\geq 0\mid \kappa(r)\geq 0\ \text{for any}\ r\geq s\}, \\
r_1:=\inf\{s\geq r_0 \mid s(s-r_0)\kappa(r)\geq 8\sigma_0^{2}\ \text{for any}\ r\geq s\},
\end{array}
\end{equation*}
and consider the following functions on $\mathbb{R}_+$
\begin{align*}
\varphi(r)&=\exp\left(-\sigma_0^{-2}\int^r_0  (s\kappa(s))^{-} \dif s\right),\quad  \Phi(r)=\int^r_0\varphi(s)\dif s, \quad
g(r)=1-\frac{\lambda}{4\sigma^2_0 }\int^{r\wedge r_1}_0\Phi(s)/\varphi(s)\dif s,
\end{align*}
where the constant $\lambda$ is defined as
\begin{align}\label{kappa:coe}
\lambda:=2\sigma^2_0 \left(\int^{r_1}_0\Phi(s)/\varphi(s)\dif s\right)^{-1}.
\end{align}
Then referring to \cite[p. 857]{Eberle2016reflection} again, there is a {\bf concave} function
\begin{align}
\Psi(r)=\int^r_0\varphi(s)g(s)\dif s,\quad r\geq 0,\label{f:aux}
\end{align}
with below properties stating that
\begin{align}
&\frac{r}{2} \varphi(r_0)\leq \Psi(r)\leq r,\quad 0\leq\Psi'(r)\leq 1,\qquad\forall r\geq 0,\label{f:pro1}\\
&2\sigma^2_0\Psi''(r)-\frac{r\kappa(r)}{2}\Psi'(r)
\leq -\lambda \Psi(r),\quad \forall r\in\mathbb{R}_+\setminus \{r_1\}\label{f:pro2}.
\end{align}

\begin{proof}[Proof of Lemma \ref{ergodicx} (ii)]

We present the exponential ergodicity of SDE \eqref{sde:B} by reflection coupling. Rewrite SDE \eqref{sde:B} as
\begin{eqnarray*}
\dif Y_t = b(Y_t) \dif t+\tilde{\sigma}(Y_t)\dif \tilde{B}_t+\sigma_0\dif B_t, & t\geq 0,
\end{eqnarray*}
where $\tilde{B}_t$ is an independent $d$-dimensional Brownian motion with $B_t$, and the positive constant $\sigma_0$ satisfies the matrices $\sigma(x) \sigma^{*}(x)-\sigma_0^2 \mathbb{I}, \forall x\in \R^d$ are all positive definite and
\begin{eqnarray*}
\sigma(x) \sigma^{*}(x)=\sigma_0^2 \mathbb{I} + \tilde{\sigma}(x)\tilde{\sigma}^{*}(x), \quad \forall x\in \R^d.
\end{eqnarray*}

We consider the following coupling process $(Y_t,\tilde{Y}_t)$ as
\begin{equation}
\left\{
\begin{array}{ll}
\dif Y_t=b(Y_t)\dif t+\tilde{\sigma}(Y_t)\dif \tilde{B}_t+\sigma_0\dif B_t, & t\geq 0,\\
\dif \tilde{Y}_t=b(\tilde{Y}_t)\dif t+\tilde{\sigma}(\tilde{Y}_t)\dif\tilde{B}_t+\sigma_0(\mathbb{I}-2e_te_t^*)\dif B_t, & t<\tau, \\
\end{array}
\right. \label{sde:cou}
\end{equation}
where $\tau$ is the coupling time, i.e.,
$\tau=\inf\{t\geq 0\mid \tilde{Y}_t=Y_t\}$
and $e_t:=(\tilde{Y}_t-Y_t)/|\tilde{Y}_t-Y_t|.$
For $t\geq \tau$, we assume $|\tilde{Y}_t-Y_t|=0$.
Let $(\tilde{Y}_t, Y_t)$ be a strong solution of SDE \eqref{sde:cou} with $(\tilde{Y}_0, Y_0)=(x,y)$. For any $t\geq 0$, we denote by
$$
Z_t:=\tilde{Y}_t-Y_t,\quad R_t:=|\tilde{Y}_t-Y_t|.
$$
Then by \eqref{sde:cou} and It\^o's formula, we have for $t<\tau$
\begin{align*}
\dif R_t^2=&\big[2\langle Z_t, b(\tilde{Y}_t)-b(Y_t)\rangle+\|\tilde{\sigma}
(\tilde{Y}_t)-\tilde{\sigma}(Y_t)\|
^2_{\mathrm{HS}}+\|2\sigma_0 e_te_t^*\|^2_{\mathrm{HS}}\big]\dif t+2\dif M_t,
\end{align*}
where
$$
\dif M_t=\langle Z_t, 2\sigma_0 e_te_t^*\dif B_t\rangle+\langle Z_t, [\tilde{\sigma}(\tilde{Y}_t)-\tilde{\sigma}(Y_t)]
\dif \tilde{B}_t\rangle.
$$
Since for $t<\tau$, $R_t>0$, $\| e_te_t^*\|^2_{\mathrm{HS}}=1$ and $|Z^*_t e_te_t^*|^2=R_t^2$, by It\^o's formula, we have
\begin{align*}
\dif R_t =(2R_t)^{-1}\dif R^2_t-(2R_t)^{-3}\dif \langle R^2_\cdot,R^2_\cdot \rangle_t =(2R)^{-1}_t H_t \dif t+R^{-1}_t\dif M_t,
\end{align*}
where 
$$
H_t:=2\langle Z_t, b(\tilde{Y}_t)-b(Y_t)\rangle
+\|\tilde{\sigma}(\tilde{Y}_t)-\tilde{\sigma}(Y_t)\|
^2_{\mathrm{HS}}
-R_t^{-2}|Z^*_t(\tilde{\sigma}(\tilde{Y}_t)
-\tilde{\sigma}(Y_t))|^2.
$$
Then taking $\Psi$ as \eqref{f:aux}, we have for $t<\tau$
\begin{align}
\dif \Psi(R_t)=&\Psi'(R_t)R^{-1}_t\big[\langle Z_t, b(\tilde{Y}_t)-b(Y_t)\rangle+2^{-1}\|
\tilde{\sigma}(\tilde{Y}_t)-\tilde{\sigma}(Y_t)\|
^2_{\mathrm{HS}} \big]\dif t\nonumber\\
&-\Psi'(R_t)R_t^{-3}
\big[2^{-1}|Z^*_t(\tilde{\sigma}(\tilde{Y}_t)
-\tilde{\sigma}(Y_t))|^2 \big]\dif t\nonumber\\
&+\Psi''(R_t)R_t^{-2}
\big[2^{-1}|Z^*_t(\tilde{\sigma}(\tilde{Y}_t)
-\tilde{\sigma}(Y_t))|^2
+2\sigma^2_0R^2_t\big]\dif t+\Psi'(R_t)R^{-1}_t\dif M_t\nonumber\\
\leq &\big[\Psi'(R_t)(2R_t)^{-1}H_t
+2\sigma^2_0\Psi''(R_t)\big]\dif t+\Psi'(R_t)R^{-1}_t\dif M_t.
\label{f:ito}
\end{align}
By dissipativity condition \hyperref[DC]{$(DC)$}, the property of $\Psi$ in \eqref{f:pro1}, \eqref{f:pro2} and the fact $\kappa(r)\geq c_0-c_1 r^{-2}, \forall r>0$, we have
\begin{align*}
\Psi'(R_t)(2R_t)^{-1}H_t+2\sigma_0^2 \Psi^{\prime \prime}(R_t)
\leq& \Psi^{\prime}(R_t) \frac{1}{2R_t}(-c_0 R_t^2+c_1)+2\sigma_0^2 \Psi^{\prime \prime}(R_t)  \\
\leq & -\frac{\Psi'(R_t)}{2}R_t\kappa(R_t)+2\sigma_0^2 \Psi^{\prime \prime}(R_t) \\
\leq& -\lambda \Psi(R_t),
\end{align*}
combining this with  \eqref{f:ito}, one knows
\begin{align*}
\dif \Psi(R_t)\leq - \lambda \Psi(R_t)\dif t
+\Psi'(R_t)R^{-1}_t\dif M_t,\quad t<\tau.
\end{align*}

For $n\geq 1$, defining the stopping time
$$
T_n:=\inf\{t>0\mid R_t\in[1/n,n]^c\},
$$
we have $T_n<\tau,\ T_n\uparrow \tau$ and
$$
\dif \e^{\lambda t}\Psi(R_t)=\e^{\lambda t}\dif \Psi(R_t)+\lambda \e^{\lambda t}\Psi(R_t)\dif t
\leq \e^{\lambda t}\Psi'(R_t)R^{-1}_t\dif M_t,
\quad t\leq T_n,
$$
which means that by taking expectation on both side
$$
\mathbb{E}[\e^{\lambda (T_n\wedge t)}\Psi(R_{T_n\wedge t})\mathbf{1}_{\{t<\tau\}}]
\leq \mathbb{E}\Psi(R_0).
$$
Taking $n\to\infty$, by Fatou's Lemma, we have
$$
\mathbb{E}[\e^{\lambda t}\Psi(R_{t})\mathbf{1}_{\{t<\tau\}}]
\leq \liminf_{n\to\infty}\mathbb{E}[\e^{\lambda (T_n\wedge t)}\Psi(R_{T_n\wedge t}) \mathbf{1}_{\{t<\tau\}}]
\leq \mathbb{E}\Psi(R_0).
$$
Then for any $t>0$, we have
\begin{align*}
\mathbb{E}[\e^{\lambda t}\Psi(R_{t})]=\mathbb{E}[\e^{\lambda t}\Psi(R_{t})\mathbf{1}_{\{t<\tau\}}]\leq \mathbb{E}\Psi(R_0).
\end{align*}
By \eqref{f:pro2}, we have
\begin{align*}
\frac{\varphi(r_0)}{2}\mathbb{E}R_t
\leq \mathbb{E}\Psi(R_t)
\leq \e^{-\lambda t}\mathbb{E}\Psi(R_0)
\leq \e^{-\lambda t}\mathbb{E}R_0, 
\end{align*}
that is,
\begin{align*}
\mathbb{E}|\tilde{Y}_t^x-Y_t^y|
\leq \frac{2}{\varphi(r_0)} \e^{-\lambda t} \mathbb{E}|x-y|, 
\end{align*}
which implies the desired result by the fact: $\lambda$ and $\varphi(r_0)$ only depend on $c_0,c_1$ and $c_2$. 
\end{proof}

\section{Proof of Lemma \ref{fix:infi}} \label{sec:secJf}

\setcounter{equation}{0}

\renewcommand{\theequation}{$\mathbb{B}$.\arabic{equation}}

Let  $\beta:[0,\infty)\to[0,\infty)$ be an absolutely continuous function with $\beta_0=0$ and locally bounded derivative $\dot{\beta}_t$ and $h_t$ be  defined as \eqref{mal:der}, we set
\begin{align}
	W^\beta_t:=\int^t_0\dot{\beta}_s\dif W_s,\quad  \lambda^\beta_t:=\int^t_0|\dot{\beta}_s|^2\dif s,\quad h^\beta_t:=\int^t_0\dot{\beta}_s\dif h_{s},\quad \forall t\geq 0.\label{W_beta}
\end{align}
To present the proof of Lemma \ref{fix:infi}, we introduce the following approximation lemma (see \cite{Wang2015gradient}).
\begin{lemma}
	\label{le:aux:app}
	Let $\xi^\eps_t$ be an $\sF^{\mW}_{\ell^\eps_t}$-adapted c\'adl\'ag $\mR^d\otimes\mR^d$-valued process
	with $\eps\in[0,1)$, $p_1,p_2\in\{0,1\}$ and $q_1,q_2\in\{0,1\}^d$. We define
	$$
	F^\eps_t:=\sum_{s\leq t}\left\langle
	\xi^\eps_{s-}\left(p_1\Delta W_{\ell^\eps_s}+q_1\right),p_2\Delta W^\beta_{\ell^\eps_s}+q_2\Delta\beta_{\ell^\eps_s}\right\rangle.
	$$
	If for any $T>0$,
	$$
	\sup_{t\in[0,T]}\sup_{\eps\in[0,1)}\mE[\|\xi^\eps_t\|^2]<\infty,
	$$
	and
	$$
	\lim_{s\uparrow t}\mE[\|\xi^\eps_{s}-\xi^\eps_{t-}\|^2]=0,\quad \lim_{\eps\downarrow 0}\sup_{t\in[0,T]}\mE[\|\xi^\eps_{t}-\xi^0_{t}\|^2]=0,
	$$
	then we have
	$$
	\lim_{\eps\downarrow 0}\sup_{t\in[0,T]} \mE[|F^\eps_t-F^0_t|]=0.
	$$
\end{lemma}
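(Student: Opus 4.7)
By bilinearity in each slot it suffices to treat separately the four cases $(p_1,p_2) \in \{0,1\}^2$. Writing $G^\eps(s)$ for the summand in $F^\eps_t$ at time $s$, and observing that the jumps of $\ell^\eps$ are precisely $\{s : \Delta \ell_s \geq \eps\}$, one obtains the natural decomposition
\[
F^\eps_t - F^0_t \;=\; \underbrace{\sum_{\substack{s \leq t \\ \Delta \ell_s \geq \eps}}\bigl(G^\eps(s) - G^0(s)\bigr)}_{=:\,\mathrm{I}^\eps_t} \;-\; \underbrace{\sum_{\substack{s \leq t \\ 0 < \Delta \ell_s < \eps}} G^0(s)}_{=:\,\mathrm{II}^\eps_t}.
\]
The quantitative driver of all estimates is that, $\ell$ being a finite-variation subordinator path on $[0,T]$, monotone convergence yields $\tau_\eps := \sum_{s \leq T,\ \Delta \ell_s < \eps}\Delta \ell_s \to 0$ as $\eps \downarrow 0$.

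\textbf{Small-jump remainder $\mathrm{II}^\eps_t$.} The adaptedness of $\xi^0_{s-}$ to $\sF^{\mathbb{W}}_{\ell_{s-}}$ combined with the conditional Gaussianity of $\Delta W_{\ell_s}$ (covariance $\Delta \ell_s\, I$) and $\Delta W^\beta_{\ell_s}$ (covariance bounded by $\|\dot\beta\|^2_{\infty,[0,\ell_T]}\Delta \ell_s$) splits each summand into a martingale-difference part, orthogonal in $L^2$ relative to the filtration $(\sF^{\mathbb{W}}_{\ell_{t}})_t$, plus a conditionally deterministic part. The former has squared $L^2$-norm $\leq C\sum_{\Delta \ell_s < \eps}(\Delta \ell_s)^2 \leq C\eps\,\ell_T$; the latter has $L^1$-norm $\leq C\sum_{\Delta \ell_s < \eps}|\Delta \beta_{\ell_s}| \leq C\|\dot\beta\|_\infty \tau_\eps$. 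Both vanish uniformly in $t \leq T$ as $\eps \downarrow 0$.

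\textbf{Bulk error $\mathrm{I}^\eps_t$.} Trilinear expansion of $G^\eps(s) - G^0(s)$ produces three groups of terms: (a) those containing a factor $\xi^\eps_{s-} - \xi^0_{s-}$; (b) those containing $\Delta W_{\ell^\eps_s} - \Delta W_{\ell_s}$; (c) those containing $\Delta W^\beta_{\ell^\eps_s} - \Delta W^\beta_{\ell_s}$ or $\Delta \beta_{\ell^\eps_s} - \Delta \beta_{\ell_s}$. Group (a) is controlled by combining the hypothesis $\sup_{t \leq T}\mE\|\xi^\eps_t - \xi^0_t\|^2 \to 0$ with the same orthogonality/$L^2$ calculation used for $\mathrm{II}^\eps_t$, giving a bound of order $\ell_T^{1/2}\bigl(\sup_{t \leq T}\mE\|\xi^\eps_t - \xi^0_t\|^2\bigr)^{1/2}$. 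Groups (b), (c) exploit that at a common big jump $s$ the intervals $[\ell^\eps_{s-}, \ell^\eps_s]$ and $[\ell_{s-}, \ell_s]$ have identical length $\Delta \ell_s$ but are shifted by $\sigma_\eps(s) := \ell_{s-} - \ell^\eps_{s-} \leq \tau_\eps$; the $L^2$-continuity of Brownian motion then yields $\mE|\Delta W_{\ell^\eps_s} - \Delta W_{\ell_s}|^2 \leq 2d\,\sigma_\eps(s)$ and analogous bounds for $W^\beta$ and $\beta$.

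\textbf{Main obstacle.} The delicate step is groups (b), (c), because the shifted pairs of Brownian increments are \emph{not} pairwise orthogonal across distinct jump times, so a naive diagonal $L^2$-sum does not close. The resolution is to insert the interpolating increment $W_{\ell_s} - W_{\ell^\eps_s}$ (of conditional variance $\sigma_\eps(s)\,d$) as a telescoping bridge, which restores the orthogonality structure after an extra conditioning on $\sF^{\mathbb{W}}_{\ell^\eps_{s-}}$. The hypothesis $\lim_{s \uparrow t}\mE\|\xi^\eps_s - \xi^\eps_{t-}\|^2 = 0$ is used precisely at this point to absorb the mixed cross-terms arising from evaluating $\xi^\eps$ at shifted subordinator times. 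Assembling the three estimates and letting $\eps \downarrow 0$ yields $\sup_{t \leq T}\mE|F^\eps_t - F^0_t| \to 0$, which is the asserted conclusion.
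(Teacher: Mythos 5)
Your decomposition into a small-jump remainder and a bulk error over the jumps with $\Delta\ell_s\geq\eps$ is reasonable, and the treatment of $\mathrm{II}^\eps_t$ and of group (a) is essentially sound. The gap is in groups (b) and (c), which you yourself flag as the delicate step and then resolve only with the phrase ``insert the interpolating increment \dots\ as a telescoping bridge,'' which is not an argument. Moreover, the quantitative route you set up does not close: the per-jump bound $\mE|\Delta W_{\ell^\eps_s}-\Delta W_{\ell_s}|^2\lesssim d\,\sigma_\eps(s)$ with $\sigma_\eps(s)\leq\tau_\eps$ must still be summed over the big jumps, of which there are of order $\eps^{-\alpha/2}$ on $[0,T]$ for an $\tfrac{\alpha}{2}$-stable path, while $\tau_\eps\asymp\eps^{1-\alpha/2}$; the product $\asymp\eps^{1-\alpha}$ diverges for $\alpha\in(1,2)$. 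The mechanism that actually works --- and the one the paper uses by invoking \cite[Lemma 2.2]{Wang2015gradient} --- is dominated convergence over the countable jump set: one also has $\mE|\Delta W_{\ell^\eps_s}-\Delta W_{\ell_s}|^2\lesssim d\,\Delta\ell_s$, the right-hand sides are summable since $\sum_{s\leq T}\Delta\ell_s=\ell_T<\infty$, and each term tends to $0$, whence $\sum_{s\leq T}\mE|\Delta W_{\ell^\eps_s}-\Delta W_{\ell_s}|^2\to0$. Your ``main obstacle'' (loss of orthogonality across jump times) is also a misdiagnosis: since the conclusion is only in $L^1$, the paper applies Minkowski's inequality term by term and then Cauchy--Schwarz in the form $\sum_s a_s^{1/2}(\Delta\ell_s)^{1/2}\leq\big(\sum_s a_s\big)^{1/2}\ell_T^{1/2}$, so no orthogonality between distinct jumps is ever needed for the noise-difference terms; orthogonality is only exploited for the pieces that are linear in the Brownian noise.

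Two further defects to repair. First, the trilinear expansion must be ordered so that each $\xi$-factor is independent of the Brownian increments it multiplies: the difference $\xi^\eps_{s-}-\xi^0_{s-}$ (measurable with respect to $\sF^{\mW}_{\ell_{s-}}$) has to be paired with the increments on the \emph{true} clock $\Delta W_{\ell_s},\Delta W^\beta_{\ell_s}$, while the increment differences have to be paired with $\xi^\eps_{s-}$ (measurable with respect to $\sF^{\mW}_{\ell^\eps_{s-}}$, hence independent of $W$ after $\ell^\eps_{s-}$). With the opposite pairing the second-moment factorizations you invoke fail, and since only second moments of $\xi^\eps$ are assumed you cannot recover them by a further Cauchy--Schwarz. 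Second, the hypothesis $\lim_{s\uparrow t}\mE\|\xi^\eps_s-\xi^\eps_{t-}\|^2=0$ is not there to ``absorb mixed cross-terms from shifted subordinator times''; it is the left $L^2$-continuity needed inside \cite[Lemma 2.2]{Wang2015gradient} to approximate the integrand by simple processes and pass to the limit in the time-changed stochastic integrals $\int_0^t\xi_{s-}\,\dif W_{\ell^\eps_s}$.
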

\begin{proof}
	It is enough to assume
	\begin{align*}
		F^\eps_t=&\int^t_0 A_{s-}^{\eps}\dif \beta_{\ell^\eps_s}+\sum_{s\in(0,t]}\big\langle V_{s-}^{\eps}(1+\Delta\beta_{\ell^\eps_s}),\Delta W^\beta_{\ell^\eps_s}\big\rangle+\sum_{s\in(0,t]}\big\langle M_{s-}^{\eps} \Delta W^\beta_{\ell^\eps_{s}},\Delta W_{\ell^\eps_{s}}\big\rangle\\
		=:& I_1^{\eps}(t)+I_2^{\eps}(t)+I_3^{\eps}(t),
	\end{align*}
	where $A_{t}^{\eps}, V_{t}^{\eps}$ and $M_{t}^{\eps}$ have the same properties as $\xi^\eps_t$. We only prove
	\begin{align}
		\lim_{\eps\downarrow 0}\sup_{t\in[0,T]}\mathbb{E}\big[|I_1^{\eps}(t)-I_1^{0}(t)|^2+|I_2^{\eps}(t)-I_2^{0}(t)|^2\big]=0, \label{kk1}
	\end{align}
	and
	\begin{align}
		\lim_{\eps\downarrow 0}\sup_{t\in[0,T]}\mathbb{E}[|I_3^{\eps}(t)-I_3^{0}(t)|]=0. \label{kk2}
	\end{align}
	For \eqref{kk1}, by \eqref{app:fix:p1}, we have $\ell^0_t=\ell_t$,
	\begin{align*}
		&\mathbb{E}\big[|I_1^{\eps}(t)-I_1^{0}(t)|^2+|I_2^{\eps}(t)-I_2^{0}(t)|^2\big]\\
		\lesssim & \beta_{\ell_t}\int^t_0\mathbb{E}\big[ \big|A_{s-}^{0}-A_{s-}^{\eps}\big|^2\big]\dif \beta_{\ell_s}+(1+\beta_{\ell_t})^2\int^t_0\mathbb{E} \big[\big|V_{s-}^{0}-V_{s-}^{\eps}\big|^2\big]\dif \lambda^{\beta}_{\ell_s}+R^\eps_t,
	\end{align*}
	where
	\begin{align*}
		R^\eps_t
		:=&
		\mathbb{E}\Big[\Big|\sum_{s\in(0,t]}\big\langle V_{s-}^{\eps}(1+\Delta\beta_{\ell^\eps_s}),\Delta W^\beta_{\ell^\eps_s}\big\rangle-\sum_{s\in(0,t]}\big\langle V_{s-}^{\eps}(1+\Delta\beta_{\ell^\eps_s}),\Delta W^\beta_{\ell_s}\big\rangle \Big|^2\Big]\\
		&
		+\Big(\sum_{s\leq t}\Delta \beta_{\ell_s}\mathbf{1}_{\Delta\ell_s<\eps} \Big)^2\sup_{s\leq T;\eps<1}\mathbb{E}[|A_{s}^{\eps}|^2]
		+\sum_{s\leq t}(\Delta \beta_{\ell_s})^2\Delta\lambda^{\beta}_{\ell_s}\mathbf{1}_{\Delta\ell_s<\eps} \sup_{s\leq T;\eps<1}\mathbb{E}[|V_{s}^{\eps}|^2]
		\\
		\lesssim&\mathbb{E}\Big[\Big|\sum_{s\in(0,t]}\big\langle V_{s-}^{\eps}(1+\Delta\beta_{\ell^\eps_s}),\Delta W^\beta_{\ell^\eps_s}\big\rangle-\sum_{s\in(0,t]}\big\langle V_{s-}^{\eps}(1+\Delta\beta_{\ell^\eps_s}),\Delta W^\beta_{\ell_s}\big\rangle \Big|^2\Big]\\
		&+\beta_{\ell_t}(1+\lambda^{\beta}_{\ell_s})\Big(\sum_{s\leq t}\Delta \beta_{\ell_s}\mathbf{1}_{\Delta\ell_s<\eps} \Big)\sup_{s\leq T;\eps<1}\mathbb{E}[|A_{s}^{\eps}|^2+|V_{s}^{\eps}|^2].
	\end{align*}
	Using \cite[Lemma 2.2]{Wang2015gradient} and the dominated convergence theorem, we have
	\begin{align*}
		\lim_{\eps\downarrow0}\sup_{t\in[0,T]}\mathbb{E}\big[|I_1^{\eps}(t)-I_1^{0}(t)|^2+|I_2^{\eps}(t)-I_2^{0}(t)|^2\big]
		\lesssim\lim_{\eps\downarrow0}\sup_{t\in[0,T]} R^\eps_t=0.
	\end{align*}
	For \eqref{kk2}, we rewrite $I_3^{0}(t)-I_3^{\eps}(t)$ as
	\begin{align*}
		I_3^{0}(t)-I_3^{\eps}(t)
		=&\sum_{s\in(0,t]}\big\langle (M_{s-}^{0}-M_{s-}^{\eps})\Delta W^\beta_{\ell_{s}},\Delta W_{\ell_{s}}\big\rangle+\sum_{s\in(0,t]}\big\langle M_{s-}^{\eps}(\Delta W^\beta_{\ell_{s}}-\Delta W^\beta_{\ell^\eps_{s}}),\Delta W_{\ell_{s}}\big\rangle\\
		&+\sum_{s\in(0,t]}\big\langle M_{s-}^{\eps}\Delta W^\beta_{\ell^\eps_{s}},\Delta W_{\ell_{s}}-\Delta W_{\ell^\eps_{s}}\big\rangle=:K^\eps_1+K^\eps_2+K^\eps_3.
	\end{align*}
	By Minkovski's inequality and H\"older's inequality, we have
	\begin{align*}
		\mathbb{E}[|K^\eps_1|]\leq& \sum_{s\in(0,t]}\big(\mathbb{E}[|(M_{s-}^{0}-M_{s-}^{\eps})\Delta W^\beta_{\ell_{s}}|^2]\big)^{\frac{1}{2}}\big(\mathbb{E}[|\Delta W_{\ell_{s}}|^{2}]\big)^{\frac{1}{2}}\\
		\leq&C\sum_{s\in(0,t]}\big(\mathbb{E}[\|M_{s-}^{0}-M_{s-}^{\eps}\|^2]\big)^{\frac{1}{2}}(\Delta\lambda^\beta_{\ell_s})^{\frac{1}{2}}(\Delta\ell_s)^{\frac{1}{2}}\\
		\leq&C\ell_t\sup_{s\in(0,t]}\dot{\beta}_s\sup_{s\in(0,t]}\big(\mathbb{E}[|M_{s-}^{0}-M_{s-}^{\eps}|^2]\big)^{\frac{1}{2}},
	\end{align*}
	and
	\begin{align*}
		\mathbb{E}[|K^\eps_2|]
		\leq &\sum_{s\in(0,t]}\big(\mathbb{E}[|\Delta W^\beta_{\ell_{s}}-\Delta W^\beta_{\ell^\eps_{s}}|^2]\big)^{\frac{1}{2}}\big(\mathbb{E}[\|M_{s-}^{\eps}\|^{2}|\Delta W_{\ell_{s}}|^{2}]\big)^{\frac{1}{2}}\\
		\leq&C(\ell_t)^{\frac{1}{2}}\sup_{s\in(0,t]}\big(\mathbb{E}[\|M_{s-}^{\eps}\|^{2}]\big)^{\frac{1}{2}}\Big( \sum_{s\in(0,t]}\mathbb{E}[|\Delta W^\beta_{\ell_{s}}-\Delta W^\beta_{\ell^\eps_{s}}|^2]\Big)^{\frac{1}{2}}.
	\end{align*}
	By \cite[Lemma 2.2]{Wang2015gradient} and the dominated convergence theorem, we have
	\begin{align*}
		\lim_{\eps\downarrow0}\sup_{t\in[0,T]}\mathbb{E}[|K^\eps_1|]=0,\quad \lim_{\eps\downarrow0}\sup_{t\in[0,T]}\mathbb{E}[|K^\eps_2|]=0.
	\end{align*}
	Similar to $K^\eps_2$, we get
	\begin{align*}
		\lim_{\eps\downarrow0}\sup_{t\in[0,T]}\mathbb{E}[|K^\eps_3|]=0.
	\end{align*}
	Combining the above calculations, we complete the proof.
\end{proof}

We also need the following lemma.
\begin{lemma}
	\label{le:M22F}
	Given two $\sF^{\mW}_{\ell_t}$-adapted c\'adl\'ag processes
	$$
	\xi^{(1)}_{t}:\mR_+\times\Omega\times\mR^d\to\mR^d\otimes\mR^d,\quad \xi^{(2)}_{t}:\mR_+\times\Omega\times\mR^d\to\mR,
	$$
	for $p_1,p_2\in\{0,1\}$ and $q_1,q_2\in\{0,1\}^d$, we define
	$$
	F_t:=\sum_{s\leq t}\left\langle\xi^{(1)}_{s-}H_s,H^\beta_s \right\rangle+\int^t_0\xi^{(2)}_{s}\dif s,
	$$
	where
	$H_s:=p_1\Delta W_{\ell_s}+q_1$ and $ H^\beta_s:=p_2\Delta W^\beta_{\ell_s}+q_2\Delta\beta_{\ell_s}.$
	Then we have
	\begin{align*}
		(D_{h}-\beta_{\ell_t}\nabla_{v})F_t=&\sum_{s\leq t}\left\langle(D_{h}-\beta_{\ell_{s-}}\nabla_{v})\xi^{(1)}_{s-}
		H_s,H^\beta_s\right\rangle+\int^t_0(D_{h}-\beta_{\ell_s}
\nabla_{v})\xi^{(2)}_{s}\dif s+\sum_{s\leq t}\nabla_{v}F_s\Delta\beta_{\ell_s}\nonumber\\
		&-p_1\sum_{s\leq t}\left\langle\xi^{(1)}_{s-}
		\Delta h_{\ell_s},H^\beta_s\right\rangle-p_2\sum_{s\leq t}\left\langle\xi^{(1)}_{s-}
		H_s,\Delta h^\beta_{\ell_s}\right\rangle.
	\end{align*}
\end{lemma}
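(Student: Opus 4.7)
The plan is to derive the identity by applying the Leibniz rule for the Malliavin derivative $D_h$ to each piece of $F_t$, then reconcile the discrepancy between the outer coefficient $\beta_{\ell_t}$ in $(D_h - \beta_{\ell_t}\nabla_v)F_t$ and the pointwise coefficients $\beta_{\ell_{s-}},\beta_{\ell_s}$ on the right-hand side by a summation-by-parts argument that produces exactly the $\sum_{s\leq t}\nabla_v F_s\Delta\beta_{\ell_s}$ correction.

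First, since $\nabla_v$ is a directional derivative in the initial condition $x$ and both $\beta$ and the path $\ell_\cdot$ are treated as deterministic, $\nabla_v$ acts only on the $x$-dependent processes $\xi^{(1)},\xi^{(2)}$, giving
\begin{align*}
\nabla_v F_t = \sum_{s\leq t}\langle \nabla_v\xi^{(1)}_{s-} H_s, H^\beta_s\rangle + \int_0^t \nabla_v\xi^{(2)}_s\,\dif s.
\end{align*}
For $D_h$ I would use $D_h W_t = h_t$ (the defining property of the Malliavin derivative in direction $h$) together with $D_h W^\beta_t = h^\beta_t$ (which follows from $W^\beta_t = \int_0^t \dot\beta_s\,\dif W_s$ and the definition of $h^\beta$ in \eqref{W_beta}), so that $D_h\Delta W_{\ell_s} = \Delta h_{\ell_s}$, $D_h\Delta W^\beta_{\ell_s} = \Delta h^\beta_{\ell_s}$ and $D_h\Delta\beta_{\ell_s} = 0$. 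Applying the product rule termwise then yields
\begin{align*}
D_h F_t = \sum_{s\leq t}\bigl[\langle D_h\xi^{(1)}_{s-} H_s, H^\beta_s\rangle + p_1\langle \xi^{(1)}_{s-}\Delta h_{\ell_s}, H^\beta_s\rangle + p_2\langle \xi^{(1)}_{s-} H_s,\Delta h^\beta_{\ell_s}\rangle\bigr] + \int_0^t D_h\xi^{(2)}_s\,\dif s.
\end{align*}

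Next I would form $(D_h - \beta_{\ell_t}\nabla_v)F_t$ and split $\beta_{\ell_t} = \beta_{\ell_{s-}} + (\beta_{\ell_t} - \beta_{\ell_{s-}})$ inside the sum, and $\beta_{\ell_t} = \beta_{\ell_s} + (\beta_{\ell_t} - \beta_{\ell_s})$ inside the integral. The ``pointwise'' pieces combine into the principal terms $\langle(D_h - \beta_{\ell_{s-}}\nabla_v)\xi^{(1)}_{s-} H_s, H^\beta_s\rangle$ and $(D_h - \beta_{\ell_s}\nabla_v)\xi^{(2)}_s$ on the right, while the two jump contributions produced by the Leibniz rule line up with the $p_1,p_2$ terms of the target identity. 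What remains is a ``displacement'' remainder
\begin{align*}
-\sum_{s\leq t}(\beta_{\ell_t} - \beta_{\ell_{s-}})\langle\nabla_v\xi^{(1)}_{s-} H_s, H^\beta_s\rangle - \int_0^t(\beta_{\ell_t} - \beta_{\ell_s})\nabla_v\xi^{(2)}_s\,\dif s,
\end{align*}
which must be shown to equal $\sum_{s\leq t}\nabla_v F_s\Delta\beta_{\ell_s}$.

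The hard part is this last matching. I would write $\beta_{\ell_t} - \beta_{\ell_{s-}} = \sum_{u\in[s,t]}\Delta\beta_{\ell_u}$, valid because $\beta$ is absolutely continuous and, in the setting of the finite-activity approximation $\ell^{\varepsilon}$ used in Lemma \ref{fix:infi}, the driving path $\ell$ is pure-jump, so that all variation of $\beta\circ\ell$ is concentrated at the jump times of $\ell$. Applying Fubini to interchange the order of summation, the inner sum reassembles into the cumulative quantity $\nabla_v F_u$ via the formula for $\nabla_v F_t$ displayed above. The main technical subtlety is careful book-keeping with càdlàg left- versus right-limits at each jump time of $\ell$, ensuring that the Fubini exchange neither loses nor double-counts the jump at $s = u$; this is controlled by the appearance of the left limit $\xi^{(1)}_{s-}$ in the definition of $F_t$. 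Once this accounting is in place, the identity follows by purely algebraic rearrangement.
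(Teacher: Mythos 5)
Your approach is essentially the paper's: compute $\nabla_v F_t$ and $D_h F_t$ termwise via the product rule using $D_h\Delta W_{\ell_s}=\Delta h_{\ell_s}$ and $D_h\Delta W^\beta_{\ell_s}=\Delta h^\beta_{\ell_s}$, then reassemble. The only packaging difference is in the last step: the paper invokes the integration-by-parts formula for right-continuous BV functions (He--Yan--Wang, Lemma~1.39) to write $\beta_{\ell_t}\nabla_vF_t=\sum_{s\le t}\nabla_vF_s\Delta\beta_{\ell_s}+\int_0^t\beta_{\ell_s}\nabla_v\xi^{(2)}_s\,\dif s+\sum_{s\le t}\beta_{\ell_{s-}}\langle\nabla_v\xi^{(1)}_{s-}H_s,H^\beta_s\rangle$, whereas your displacement-remainder plus Fubini argument re-derives exactly this identity from scratch, using (correctly) that $\ell$ is a subordinator path, hence pure-jump, so $\beta_{\ell_t}-\beta_{\ell_{s-}}=\sum_{u\in[s,t]}\Delta\beta_{\ell_u}$. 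Same route, with the auxiliary lemma reproved rather than cited.

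There is, however, a genuine sign issue in your write-up. Your displacement remainder is the \emph{negative} of $\sum_{s\le t}(\beta_{\ell_t}-\beta_{\ell_{s-}})\langle\nabla_v\xi^{(1)}_{s-}H_s,H^\beta_s\rangle+\int_0^t(\beta_{\ell_t}-\beta_{\ell_s})\nabla_v\xi^{(2)}_s\,\dif s$, and your Fubini/telescoping shows that this bracketed quantity equals $+\sum_{s\le t}\nabla_vF_s\Delta\beta_{\ell_s}$; hence the remainder equals $-\sum_{s\le t}\nabla_vF_s\Delta\beta_{\ell_s}$, not the $+\sum$ you assert. Likewise, the Leibniz rule gives the $p_1,p_2$ terms with \emph{positive} signs in $D_hF_t$, so they do not ``line up'' with the $-p_1,-p_2$ of the stated identity; they are opposite. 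Carried out carefully, your computation yields $(D_h-\beta_{\ell_t}\nabla_v)F_t=\sum_{s\le t}\langle(D_h-\beta_{\ell_{s-}}\nabla_v)\xi^{(1)}_{s-}H_s,H^\beta_s\rangle+\int_0^t(D_h-\beta_{\ell_s}\nabla_v)\xi^{(2)}_s\,\dif s-\sum_{s\le t}\nabla_vF_s\Delta\beta_{\ell_s}+p_1\sum_{s\le t}\langle\xi^{(1)}_{s-}\Delta h_{\ell_s},H^\beta_s\rangle+p_2\sum_{s\le t}\langle\xi^{(1)}_{s-}H_s,\Delta h^\beta_{\ell_s}\rangle$, i.e.\ with all three final signs flipped relative to the stated lemma. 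In fact the paper's own intermediate identities \eqref{h^2F} and \eqref{v_2F}, when subtracted, produce exactly these flipped signs, so the displayed lemma almost certainly carries a sign typo. The gap in your proposal is that you silently assert the signs match when they do not; a careful execution would force you to flag the discrepancy rather than gloss over it.
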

\begin{proof}
	Noting that for any $s\in(0,t]$,
	$$
	D_{h}\Delta W^\beta_{\ell_{s}}
	=D_{h}\int^{\ell_s}_{\ell_{s-}} \dot{\beta}_r\dif W_{r}=\int^{\ell_s}_{\ell_{s-}} \dot{\beta}_r\dif h_{r}
	=\Delta h^\beta_{\ell_s},\quad D_{h}\Delta W_{\ell_{s}}=\Delta h_{\ell_s},
	$$
	then we have
	\begin{align}
		D_{h}F_t=&\sum_{s\leq t}\left\langle D_{h}\xi^{(1)}_{s-}
		H_s,H^\beta_s\right\rangle
		+p_1\sum_{s\leq t}\left\langle
		\xi^{(1)}_{s-}\Delta h_{\ell_s},H^\beta_s\right\rangle\nonumber\\
		&+p_2\sum_{s\leq t}\left\langle\xi^{(1)}_{s-}
		H_s,\Delta h^\beta_{\ell_s}\right\rangle
		+\int^t_0D_{h}\xi^{(2)}_{s}\dif s.\label{h^2F}
	\end{align}
	Due to
	\begin{align*}
		\nabla_{v}F_t=&\sum_{s\leq t}\left\langle\nabla_{v}\xi^{(1)}_{s-}
		H_s,H^\beta_s\right\rangle+\int^t_0\nabla_{v}\xi^{(2)}_{s}\dif s,
	\end{align*}
	then we have
	\begin{align}
		\beta_{\ell_t}\nabla_{v}F_t=\sum_{s\leq t}\nabla_{v}F_s\Delta\beta_{\ell_s}
		+\int^t_0\beta_{\ell_{s}}\nabla_{v}\xi^{(2)}_{s}\dif s
		+\sum_{s\leq t}\left\langle\beta_{\ell_{s-}}\nabla_{v}\xi^{(1)}_{s-}
		H_s,H^\beta_s\right\rangle,
		\label{v_2F}
	\end{align}
	which is based on the integration by parts for right-continuous functions (see \cite[Lemma 1.39]{He1992semimartingale}).
	Combining \eqref{h^2F} and \eqref{v_2F}, we have the desired result and complete the proof.
\end{proof}

\begin{proof}[Proof of Lemma \ref{fix:infi}]
	{\bf Step1.} Given $v_i\in \mR^d$ with $i=0,1$. Let $v_0=v$ and $|v_i|=1$ with $i=0,1$.
	For $p\geq 1$, we denote by
	$$
	F^p_n(t):=\mathbb{E}\Big[\Big|\Big(\prod^n_{i=0}\nabla_{v_{i}}\Big)X^{\ell^{\eps},x}_t\Big|^{2p}\Big],\quad \forall n=0,1.
	$$
	{ \bf (1.1)} We consider $n=0$. By \eqref{sde:fix_xp}, we have
	\begin{align}
		\dif \nabla_vX^{\ell^\eps,x}_t=\nabla_{\nabla_vX^{\ell^\eps,x}_t}b(X^{\ell^\eps,x}_t)\dif t+\nabla_{\nabla_vX^{\ell^\eps,x}_{t-}}\sigma(X^{\ell^\eps,x}_{t-})\dif W_{\ell^\eps_t},\quad \nabla_vX^{\ell^\eps,x}_0=v.\label{fix:1g}
	\end{align}
	Then by It\^o's formula for purely jump semimartingales (see \cite[Theorem 9.35]{He1992semimartingale}), we have
	\begin{align*}
		|\nabla_{v}X^{\ell^{\eps},x}_t|^{2p}=&1+2p\int^t_0|\nabla_{v}X^{\ell^{\eps},x}_s|^{2p-2}\langle\nabla_{v}X^{\ell^{\eps},x}_s,\nabla_{ \nabla_v X^{\ell^{\eps},x}_{s}}b(X^{\ell^{\eps},x}_{s})\rangle\dif s+M_t\\
		&+\sum_{s\in(0,t]}\big[\delta_{\Delta (\nabla_{v}X^{\ell^{\eps},x}_s)}|\nabla_{v}X^{\ell^{\eps},x}_{s-}|^{2p}-\nabla_{\Delta (\nabla_{v}X^{\ell^{\eps},x}_s)} |\nabla_{v}X^{\ell^{\eps},x}_{s-}|^{2p}\big],
	\end{align*}
	where $M_t$ is a martingale. Taking expectation, by Assumption \ref{hyp:A} \hyperref[A2]{$(A2)$}, we have
	\begin{align}\label{fix:est:ito}
		\mathbb{E}\left[|\nabla_{v}X^{\ell^{\eps},x}_t|^{2p}\right]\leq&1+
		2p c_3\int^t_0\mathbb{E}\left[|\nabla_{v}X^{\ell^{\eps},x}_s|^{2p}\right]\dif s+I_1,
	\end{align}
	where
	$$
	I_1:=\sum_{s\in(0,t]}\mathbb{E}\big[\delta_{\Delta (\nabla_{v}X^{\ell^{\epsilon},x}_s)}|\nabla_{v}X^{\ell^{\eps},x}_{s-}|^{2p}-\nabla_{\Delta (\nabla_{v}X^{\ell^{\epsilon},x}_s)} |\nabla_{v}X^{\ell^{\eps},x}_{s-}|^{2p}\big].
	$$
	Note that
	\begin{align}
		\nabla |x|^{2p}=2p|x|^{2p-2}x, \quad \|\nabla^2 |x|^{2p}\|\leq 2p(2p-1)|x|^{2p-2}.\label{auxi:g}
	\end{align}
	Then by Newton-Leibniz's formula, we have for any $s\in(0,t]$
	\begin{align*}
		\delta_{u}|x|^{2p}-\nabla_{u}|x|^{2p}
		=\int^1_0\langle \nabla^2 |x+ru|^{2p},uu^*\rangle_{\small{\mathrm{HS}}}\dif r
		\lesssim_p &|x|^{2p-2}|u|^2+|u|^{2p}.
	\end{align*}
	For $I_1$, as
	\begin{align*}
		\Delta (\nabla_{v}X^{\ell^{\eps},x}_s)=\nabla_{ \nabla_v X^{\ell^{\eps},x}_{s-}}\sigma(X^{\ell^{\eps},x}_{s-})\Delta W_{\ell^{\eps}_s},
	\end{align*}
	then by Burkholder's inequality (see \cite[Lemma 2.1]{Wang2015gradient}) and  Assumption \ref{hyp:A} \hyperref[A2]{$(A2)$}, we have
	\begin{align*}
		&\mathbb{E}\big[\delta_{\Delta (\nabla_{v}X^{\ell^{\eps},x}_s)}|\nabla_{v}X^{\ell^{\eps},x}_{s-}|^{2p}-\nabla_{\Delta (\nabla_{v}X^{\ell^{\eps},x}_s)} |\nabla_{v}X^{\ell^{\eps},x}_{s-}|^{2p}\big]\\
		\lesssim&_p \mathbb{E}\big[(|\nabla_{v}X^{\ell^{\eps},x}_{s-}|^{2p-2}|\nabla_{ \nabla_v X^{\ell^{\eps},x}_{s-}}\sigma(X^{\ell^{\eps},x}_{s-})\Delta W_{\ell^{\eps}_s}|^2+|\nabla_{ \nabla_v X^{\ell^{\eps},x}_{s-}}\sigma(X^{\ell^{\eps},x}_{s-})\Delta W_{\ell^{\eps}_s}|^{2p})\big]\\
		\lesssim&_{p,c_3}\left(1+(\ell^{\eps}_s)^{p-1}\right)\Delta \ell^{\eps}_s\mathbb{E}\big[|\nabla_{v}X^{\ell^{\eps},x}_{s-}|^{2p}\big].
	\end{align*}
	Combining \eqref{fix:est:ito}, there is $C=C(p,c_3)>0$ such that
	\begin{align*}
		F^p_0(t)
		\leq&1+2pc_3\int^t_0F^p_0(s)\dif s +C\left(1+(\ell^{\eps}_t)^{p-1}\right)\int^t_0F^p_0(s-)\dif \ell_s.
	\end{align*}
	By Gronwall's inequality (see \cite[Lemma 3.2]{Friz2010Multidimensional}), noting that $|v|=1$, we have
	\begin{align}
		\label{fe:1th}
		&F^p_0(t)\leq C\exp\{C\left(\ell^\eps_t+(\ell^\eps_t)^{p}+t\right)\},
	\end{align}
	where $C=C(p,c_3)>0$.
	
	{ \bf (1.2)} We consider $n=1$. Using \eqref{fe:1th} and repeating the above processes, we similarly have
	\begin{align*}
		F^p_1(t)
		\lesssim_{p,c_3}& \int^t_0F^p_1(s)\dif s +\left(1+(\ell^{\eps}_t)^{p-1}\right)\int^t_0F^p_1(s-)\dif \ell_s\\
		&+\int^t_0F^{2p}_0(s)\dif s +\left(1+(\ell^{\eps}_t)^{p-1}\right)\int^t_0F^{2p}_0(s-)\dif \ell_s\\
		\lesssim_{p,c_3}& \int^t_0F^p_1(s)\dif s +\left(1+(\ell^{\eps}_t)^{p-1}\right)\int^t_0F^p_1(s-)\dif \ell_s
		+\exp\left\{C\left(\ell^\eps_t+(\ell^\eps_t)^{2p}+t\right)\right\},
	\end{align*}
	which implies by Gronwall's inequality that there exists $C=C(p,c_3)>0$ such that 
	\begin{align*}
		&F^p_1(t)\leq C\exp\left\{C\left(\ell^\eps_t+(\ell^\eps_t)^{2p}+t\right)\right\}.
	\end{align*}

{\bf Step2.} We prove \eqref{B5} by following steps.
	
	{\bf (2.1)} We first prove
	\begin{align}
		&\lim_{\eps\downarrow 0}\sup_{t\in[0,T]}\mathbb{E}\big|X^{\ell^\eps,x}_t- X^{\ell,x}_t|^2=0.\label{fix:infi:k1}
	\end{align}
	Denoting by $Y^\eps_t:=X^{\ell,x}_t- X^{\ell^\eps,x}_t$, by \eqref{sde:fix_xp} and  \eqref{app:fix:sde}, we have
	\begin{align*}
		Y^\eps_t=\int^t_0\delta_{Y^\eps_s}b(X^{\ell^\eps,x}_s)\dif s+\int^t_0\delta_{Y^\eps_{s-}}\sigma(X^{\ell^\eps,x}_{s-})\dif W_{\ell_s}+R^\eps_t,
	\end{align*}
	where
	$$
	R^\eps_t:=\int^t_0\sigma(X^{\ell^\eps,x}_{s-})\dif W_{\ell_s}-\int^t_0\sigma(X^{\ell^\eps,x}_{s-})\dif W_{\ell^\eps_s}.
	$$
By Assumption \ref{hyp:A} \hyperref[A2]{$(A2)$}, Jensen's inequality and  
$\mathbf{C}_r$-inequality, for $t\in[0,T]$, we have
	\begin{align*}
		\mathbb{E}[ |Y^\eps_t|^2]\leq3T\|\nabla b\|^2_{\infty}\int^t_0 \mathbb{E}[|Y^\eps_s|^2]\dif s+3\|\nabla \sigma\|^2_{\infty}\int^t_0\mathbb{E}[|Y^\eps_s|^2]\dif \ell_s+3\mathbb{E}[ |R^\eps_t |^2].
	\end{align*}
	And by Lemma \ref{le:aux:app}, we have
	$$
	\lim_{\eps\downarrow 0}\sup_{t\in[0,T]}\mathbb{E} [|R^\eps_t |^2]=0.
	$$
	Then by Gronwall's inequality (see \cite[Lemma 3.2]{Friz2010Multidimensional}), we have
	\begin{align*}
		\lim_{\eps\downarrow 0}\sup_{t\in[0,T]}\mathbb{E} [|Y^\eps_t|^2]\leq 3\e^{3T^2\|\nabla b\|^2_{\infty}+3\|\nabla \sigma\|^2_{\infty}\ell_T}\lim_{\eps\downarrow 0}\sup_{t\in[0,T]}\mathbb{E} [|R^\eps_t |^2]=0.
	\end{align*}
	{\bf (2.2)} We secondly consider to prove
	\begin{align*}
		&\lim_{\eps\downarrow 0}\sup_{t\in[0,T]}\mathbb{E}\big[\big|\nabla_{v}(X^{\ell^\eps,x}_t- X^{\ell,x}_t)\big|^2\big]=0.
	\end{align*}
	Denoting by $Z^\eps_t:=\nabla_v X^{\ell,x}_t- \nabla_vX^{\ell^\eps,x}_t$, by \eqref{fix:1g}, we have
	\begin{align}
		Z^\eps_t=\int^t_0\nabla_{Z^\eps_s}b(X^{\ell,x}_s)\dif s+\int^t_0\nabla_{Z^\eps_{s-}}\sigma(X^{\ell,x}_{s-})\dif W_{\ell_s}+\tilde{R}^\eps_t+\bar{R}^\eps_t,\label{fix:est:k1}
	\end{align}
	where
	\begin{align*}
		&\tilde{R}^\eps_t:=\int^t_0\nabla_{ \nabla_v X^{\ell^\eps,x}_{s-}}\sigma(X^{\ell^\eps,x}_{s-})\dif W_{\ell_s}-\int^t_0\nabla_{ \nabla_v X^{\ell^\eps,x}_{s-}}\sigma(X^{\ell^\eps,x}_{s-})\dif W_{\ell^\eps_s},\\
		&\qquad\quad\bar{R}^\eps_t:=\int^t_0\nabla_{ \nabla_v X^{\ell^\eps,x}_{s}}b(X^{\ell,x}_{s})-\nabla_{ \nabla_v X^{\ell^\eps,x}_{s}}b(X^{\ell^\eps,x}_{s})\dif s\\
		&\qquad\qquad+\int^t_0\nabla_{ \nabla_v X^{\ell^\eps,x}_{s-}}\sigma(X^{\ell,x}_{s-})-\nabla_{ \nabla_v X^{\ell^\eps,x}_{s-}}\sigma(X^{\ell^\eps,x}_{s-})\dif W_{\ell_s}.
	\end{align*}
	By Lemma \ref{le:aux:app}, we have
	\begin{align}\label{fix:est:k2}
		\lim_{\eps\downarrow 0}\sup_{t\in[0,T]}\mathbb{E}[ |\tilde{R}^\eps_t |^2]=0.
	\end{align}
	By $\mathbf{C}_r$-inequality and H\"older inequality, we have
	\begin{align*}
		\sup_{t\in[0,T]}\mathbb{E}[|\bar{R}^\eps_t|^2]\leq &2T\int^T_0\mathbb{E}\Big[\big|\nabla_{ \nabla_v X^{\ell^\eps,x}_{s}}b(X^{\ell,x}_{s})-\nabla_{ \nabla_v X^{\ell^\eps,x}_{s}}b(X^{\ell^\eps,x}_{s})\big|^2\Big]\dif s\\
		&+2\int^T_0\mathbb{E}\Big[\big\|\nabla_{ \nabla_v X^{\ell^\eps,x}_{s-}}\sigma(X^{\ell,x}_{s-})-\nabla_{ \nabla_v X^{\ell^\eps,x}_{s-}}\sigma(X^{\ell^\eps,x}_{s-})\big\|^2\Big]\dif \ell_s\\
		\leq &2T\int^T_0\big(\mathbb{E}\big[\big|\nabla_v X^{\ell^\eps,x}_{s}\big|^4\big]\big)^{\frac{1}{2}}\big(\mathbb{E}\big[\big\|\nabla b(X^{\ell,x}_{s})-\nabla b(X^{\ell^\eps,x}_{s})\big\|^4\big]\big)^{\frac{1}{2}}\dif s\\
		&+2\int^T_0\big(\mathbb{E}\big[\big|\nabla_v X^{\ell^\eps,x}_{s-}\big|^4\big]\big)^{\frac{1}{2}}\big(\mathbb{E}\big[\big\|\nabla \sigma(X^{\ell,x}_{s-})-\nabla \sigma(X^{\ell^\eps,x}_{s-})\big\|^4\big]\big)^{\frac{1}{2}}\dif \ell_s.
	\end{align*}
	By \eqref{fe:1th}, \eqref{fix:infi:k1} and the dominated convergence theorem, we have
	\begin{align*}
		\lim_{\eps\downarrow0}\sup_{t\in[0,T]}\mathbb{E}\big[\big|\nabla b(X^{\ell,x}_{t})-\nabla b(X^{\ell^\eps,x}_{t})\big|^4\big]=0,
	\end{align*}
	which implies
	\begin{align}\label{fix:est:k3}
		\lim_{\eps\downarrow 0}\sup_{t\in[0,T]}\mathbb{E} [|\bar{R}^\eps_t |^2]=0.
	\end{align}
	Combining \eqref{fix:est:k1}, \eqref{fix:est:k2} and \eqref{fix:est:k3}, as the proof procedure of \eqref{fix:infi:k1}, we have
	\begin{align*}
		\lim_{\eps\downarrow 0}\sup_{t\in[0,T]}\mathbb{E}[ |Z^\eps_t|^2]=0.
	\end{align*}
	
{\bf (2.3)} We thirdly consider to prove
	\begin{align*}
		&\lim_{\eps\downarrow 0}\sup_{t\in[0,T]}\mathbb{E}\big[\big|\nabla_{v_1}\nabla_{v}(X^{\ell^\eps,x}_t- X^{\ell,x}_t)\big|^2\big]=0.
	\end{align*}
Since the rest of the proof is repetitious, we leave out it and complete the proof.

{\bf Step3.} Recall $\{\ell^\eps_\cdot, \eps\in(0,1)\}$ as \eqref{app:fix:p1} and $\mathbf{v}:=(v,v_1)\in\mR^{2d}$ and $\beta_t=t\wedge\ell_{\tau_\ell}$. Our proof includes four parts as below.
	
	{\bf (3.1)} We prove for any $p\in[2,\infty)$,
	\begin{align}
		\lim_{\eps\downarrow0}\sup_{t\in[0,T]}\mathbb{E}\big[|\mathbf{L}^{v}_{\ell,t}-\delta(h)^{\ell^\eps}_t |^p\big]=0,\label{mal:dua:app}
	\end{align}
	where $\mathbf{L}^{v}_{\ell,t}$ is defined as \eqref{mal:dua:rew}.
	For any $\eps\in(0,1)$, we denote by
	\begin{align*}
		&\qquad A_t^{\ell^\eps,1}:=\sigma^{-1}(X^{\ell^\eps,x}_{t})\nabla_vX^{\ell^\eps,x}_{t},\\
		&A_t^{\ell^\eps,2}:=\big\langle\sigma^{-1}(X^{\ell^\eps,x}_{t}),\nabla_{\nabla_vX^{\ell^\eps,x}_{t}}\sigma(X^{\ell^\eps,x}_{t})\big\rangle_{\small{\mathrm{HS}}},\\
		&\quad A_t^{\ell^\eps,3}:=\sigma^{-1}(X^{\ell^\eps,x}_{t})\nabla_{\nabla_vX^{\ell^\eps,x}_{t}}\sigma(X^{\ell^\eps,x}_{t}).
	\end{align*}
	By \eqref{mal:dua}, recalling $\beta_t=t\wedge\ell_{\tau_\ell}$ and
	$
	\bm{\tau}^{\ell}_t=t\wedge\tau_\ell
	$ with $\tau_\ell$ defined in \eqref{de:tl},
	we rewrite $\delta(h)^{\ell^\eps}_t$ as
	\begin{align*}
		\delta(h)^{\ell^\eps}_t&=Q^\eps(t)+R^\eps(t),
	\end{align*}
	where
	\begin{align*}
		Q^\eps(t)&=\int^{\bm{\tau}^{\ell}_t}_0\big\langle A_{s-}^{\ell^\eps,1},\dif W_{\ell^\eps_s}\big\rangle-\int^{\bm{\tau}^{\ell}_t}_0 A_{s-}^{\ell^\eps,2}\dif\ell^\eps_s+\sum_{s\in(0,\bm{\tau}^{\ell}_t]}\big\langle A_{s-}^{\ell^\eps,3}\Delta W_{\ell^\eps_{s}},\Delta W_{\ell^\eps_{s}}\big\rangle,\\
		R^\eps(t)&=\int^t_{\bm{\tau}^{\ell}_t}\left\langle A_{s-}^{\ell^\eps,1},\dif W^\beta_{\ell^\eps_s}\right\rangle
		-\int^t_{\bm{\tau}^{\ell}_t} A_{s-}^{\ell^\eps,2}\dif\beta_{\ell^\eps_s}
		+\sum_{s\in(\bm{\tau}^{\ell}_t,t]}\left\langle A_{s-}^{\ell^\eps,3}\Delta W^\beta_{\ell^\eps_{s}},\Delta W_{\ell^\eps_{s}}\right\rangle\\
		&=:R_1^\eps(t)-R_2^\eps(t)+R_3^\eps(t).
	\end{align*}
	By Lemma \ref{le:aux:app} and the dominated convergence theorem, we have
	\begin{align}
		\lim_{\eps\downarrow0}\sup_{t\in[0,T]}\mathbb{E}[|\mathbf{L}^{v}_{\ell,t}-Q^\eps(t) |^2]=0.\label{mda1}
	\end{align}
	Denote by
	$$
	K:=\sup_{i\leq 3,\,\eps<1,\,t\leq T}\mathbb{E}[|A_{t}^{\ell^\eps,i}|^2].
	$$
	For $R_1^\eps(t)$, as $\beta_t=t\wedge\ell_{\tau_\ell}$, we have
	\begin{align*}
		\mE\big[|R_1^\eps(t)|^2\big]
		\leq K\sum_{s\in(\bm{\tau}^{\ell}_t,t]}\int^{\ell^\eps_s}_{\ell^\eps_{s-}}\dot{\beta}_r\dif \beta_r
		\leq K(\ell^\eps_t\wedge\ell_{\tau_\ell}-\ell^\eps_{\bm{\tau}^{\ell}_t}\wedge\ell_{\tau_\ell})\leq K(\ell_{\bm{\tau}^{\ell}_t}-\ell^\eps_{\bm{\tau}^{\ell}_t}).
	\end{align*}
	Noting that $
	\ell^\eps_t\uparrow\ell_t
	$ as $\eps\downarrow 0$, 
	then we have
	\begin{align}
		\lim_{\eps\downarrow 0}\sup_{t\in[0,T]}\mE\big[|R_1^\eps(t)|^2\big]
		\leq \lim_{\eps\downarrow 0}\sup_{t\in(\tau_\ell,T]}K(\ell_{\bm{\tau}^{\ell}_t}-\ell^\eps_{\bm{\tau}^{\ell}_t})
		\leq \lim_{\eps\downarrow 0}K(\ell_{\tau_\ell}-\ell^\eps_{\tau_\ell})=0.\label{tau-t}
	\end{align}
	Similarly, we also have
	\begin{align*}
		\lim_{\eps\downarrow 0}\sup_{t\in[0,T]}\mE\big[\left|R_2^\eps(t)\right|\big]=0.
	\end{align*}
	For $R_3^\eps(t)$, by Minkovski's inequality and H\"older's inequality, we have
	\begin{align*}
		\mathbb{E}[|R_3^\eps(t)|]\leq& \sum_{s\in(\bm{\tau}^{\ell}_t,t]}\big(\mathbb{E}[|A_{s-}^{\ell^\eps,3}\Delta W^\beta_{\ell_{s}}|^2]\big)^{\frac{1}{2}}\big(\mathbb{E}[|\Delta W_{\ell_{s}}|^{2}]\big)^{\frac{1}{2}}.
	\end{align*}
By the Burkholder-Davis-Gundy inequality, we have
	\begin{align*}
		\mathbb{E}|R_3^\eps(t)|\leq& CK^{\frac{1}{2}}\sum_{s\in(\bm{\tau}^{\ell}_t,t]}
		\left(\int^{\ell^\eps_s}_{\ell^\eps_{s-}}|\dot{\beta}_r|^2\dif r\right)^{\frac{1}{2}}(\Delta\ell_s)^{\frac{1}{2}}.
	\end{align*}
	Repeating the procedure of $R_1^\eps(t)$, it is clear that
	\begin{align*}
		\lim_{\eps\downarrow 0}\sup_{t\in[0,T]}\mE[|R_3^\eps(t)|]=0.
	\end{align*}
	Combining the above calculations, we have
	\begin{align}
		\lim_{\eps\downarrow 0}\mE\left|R^\eps(t)\right|\leq\lim_{\eps\downarrow 0}
		\mE\left[|R_1^\eps(t)|+|R_2^\eps(t)|+|R_3^\eps(t)|\right]= 0.\label{mda2}
	\end{align}
	On the other hand, by (i) of Lemma \ref{fix:infi}, it is easy to get
	$$
	\sup_{t\in[0,T]}\sup_{\eps\in(0,1)}\mathbb{E}\left[|\mathbf{L}^{v}_{\ell,t}|^p+|\delta(h)^{\ell^\eps}_t |^p\right]<\infty.
	$$
	Then by \eqref{mda1} and \eqref{mda2}, using the dominated convergence theorem, we obtain \eqref{mal:dua:app}.
	
	{\bf (3.2)} We prove 
	\begin{align}
		\lim_{\eps\downarrow0}\sup_{t\in[0,T]}\mathbb{E}\big[|\mathbf{Z}^{v,v_1}_{\ell,t}- [\nabla_{v_1},D_{h}] X^{\ell^\eps,x}_t |^2 \big]=0,\label{Z12}
	\end{align}
	where $\mathbf{Z}^{v,v_1}_{\ell,t}$ is defined as \eqref{fix:infi:lie} and $[D_{h},\nabla_{v_1}]:=D_{h}\nabla_{v_1}-\nabla_{v_1}D_{h}$. We recall
	\begin{align*}
		&\quad V^{\ell^\eps,x}_{t}:=\nabla_{v_1} \nabla_{v}X^{\ell^\eps,x}_{t},\quad M^{\ell^\eps,x}_{t}:=\nabla_{v_1}X^{\ell^\eps,x}_{t}(\nabla_{v}X^{\ell^\eps,x}_{t})^*,\\
		&\dif \nabla_{v_1}X^{\ell^\eps,x}_t=\nabla_{\nabla_{v_1}X^{\ell^\eps,x}_t}b(X^{\ell^\eps,x}_t)\dif t+\nabla_{\nabla_{v_1}X^{\ell^\eps,x}_{t-}}\sigma(X^{\ell^\eps,x}_{t-})\dif W_{\ell^\eps_t}.
	\end{align*}
By \eqref{mal:nor}, since $h_s$ is defined in \eqref{mal:der}, we know that
	\begin{align*}
		& [D_{h},\nabla_{v_1}]X^{\ell^\eps,x}_t =(D_{h}-\beta_{\ell^\eps_t}\nabla_{v})\nabla_{v_1}X^{\ell^\eps,x}_t,\quad 
		(D_{h}-\beta_{\ell^\eps_t}\nabla_{v})X^{\ell^\eps,x}_{t}=0,
	\end{align*}
	then by Lemma \ref{le:M22F}, we have
	\begin{align}
		\dif[D_{h},\nabla_{v_1}]X^{\ell^\eps,x}_t=&\int^t_0\nabla_{[D_{h},\nabla_{v_1}]X^{\ell^\eps,x}_s}
		b(X^{\ell^\eps,x}_{s})\dif s+\int^t_0\nabla_{[D_{h},\nabla_{v_1}]X^{\ell^\eps,x}_{s-}}
		\sigma(X^{\ell^\eps,x}_{s-})\dif W_{\ell^\eps_s}+Z_t^{\ell^\eps},\label{FIL}
	\end{align}
	where
	\begin{align*}
		Z_t^{\ell^\eps}:=\sum_{s\in(0,t]}V^{\ell^\eps,x}_{s}\Delta\beta_{\ell^{\eps}_s}-
		\sum_{s\in(0,t]}\nabla_{\nabla_{v_1}X^{\ell^\eps,x}_t}\sigma(X^{\ell^\eps,x}_{t-})\Delta h_{\ell^\eps_s}.
	\end{align*}
Contrasting \eqref{FIL} with \eqref{fix:infi:lie}, as the proof procedure of \eqref{fix:infi:k1}, we only need to prove
	\begin{align}\label{FEK}
		\lim_{\eps\downarrow 0}\sup_{t\in[0,T]}\mathbb{E}\big[|Z_t^{\ell^\eps}-\mathcal{Z}^{v,v_1}_{\ell,t}|^2\big]=0,
	\end{align}
where $\mathcal{Z}^{v,v_1}_{\ell,t}$ is given as
\begin{align*}
\mathcal{Z}^{v,v_1}_{\ell,t}
:=&\sum_{s\in(0,\bm{\tau}
^{\ell}_t]}\Big[\nabla_{v_1} \nabla_{v}X^{\ell,x}_{s-}-\nabla_{ \nabla_{v_1} X^{\ell,x}_{s-}}
\sigma(X^{\ell,x}_{s-})\sigma^{-1}(X^{\ell,x}_{s-})
\left(\nabla_vX^{\ell,x}_{s-}+
 \nabla_{\nabla_vX^{\ell,x}_{s-}}
 \sigma(X^{\ell,x}_{s-})\Delta W_{\ell_{s}}\right)\Big]\Delta\ell_{s}.
\end{align*}
	Since $\beta_t=t\wedge\ell_{\tau_\ell}$,
	$
	\bm{\tau}^{\ell}_t=t\wedge\tau_\ell
	$, 
	we have
	\begin{align*}
		Z_t^{\ell^\eps}-\mathcal{Z}^{v,v_1}_{\ell^\eps,t}
=\sum_{s\in(\bm{\tau}^{\ell^\eps}_t,t]}V^{\ell^\eps,x}_{s}\Delta\beta_{\ell^{\eps}_s}-
		\sum_{s\in(\bm{\tau}^{\ell^\eps}_t,t]}\nabla_{\nabla_{v_1}X^{\ell^\eps,x}_t}\sigma(X^{\ell^\eps,x}_{t-})\Delta h_{\ell^\eps_s}.
	\end{align*}
Similar to \eqref{tau-t}, we obtain
	\begin{align*}
		\lim_{\eps\downarrow0}\sup_{t\in[0,T]}\mathbb{E}\big[|Z_t^{\ell^\eps}-\mathcal{Z}_t^{\ell^\eps}|^2\big]=0.
	\end{align*}
By Lemma \ref{le:aux:app} and the dominated convergence theorem, we have
	\begin{align*}
		\lim_{\eps\downarrow0}\sup_{t\in[0,T]}\mathbb{E}\big[|\mathcal{Z}_t^{\ell^\eps}-\mathcal{Z}^{v,v_1}_{\ell,t} |^2\big]=0.
	\end{align*}
Then we  get the desired result as step {\bf(3.1)}.
\end{proof}

\section*{Acknowledgements}
X. Zhang is supported the National Natural Science Foundation of China grant (No. 12401172). X. Jin is supported by the National Natural Science Foundation of China grant (No. 12401176), and the
Fundamental Research Funds for the Central Universities grant (No. JZ2025HGTB0178).

\newpage

\end{document}